\newtheorem{thm}{Theorem}[section]
\newtheorem{lem}[thm]{Lemma}
\newtheorem{pro}[thm]{Proposition}
\newtheorem{cor}[thm]{Corollary}
\theoremstyle{definition}
\newtheorem{exa}[thm]{Example}
\theoremstyle{remark}
\newtheorem{rem}[thm]{Remark}
\newcommand{\R}{\mathbb{R}}
\newcommand{\Z}{\mathbb{Z}}
\newcommand{\N}{\mathbb{N}}
\newcommand{\cI}{\mathcal{I}}
\newcommand{\cM}{\mathcal{M}}
\newcommand{\cP}{\mathcal{P}}
\newcommand{\al}{\alpha}
\newcommand{\be}{\beta}
\newcommand{\ga}{\gamma}
\newcommand{\de}{\delta}
\newcommand{\De}{\Delta}
\newcommand{\ep}{\varepsilon}
\newcommand{\om}{\omega}
\newcommand{\si}{\sigma}
\renewcommand{\phi}{\varphi}
\newcommand{\dist}{\operatorname{dist}}
\newcommand{\diam}{\operatorname{diam}}
\newcommand{\CAT}{\operatorname{CAT}}
\newcommand{\hyp}{\operatorname{H}}
\newcommand{\id}{\operatorname{id}}
\newcommand{\pr}{\operatorname{pr}}
\newcommand{\crr}{\operatorname{cr}}
\newcommand{\reg}{\operatorname{reg}}
\renewcommand{\d}{\partial}
\newcommand{\di}{\d_{\infty}}
\newcommand{\ay}{\operatorname{aY}}
\newcommand{\harm}{\operatorname{Harm}}
\newcommand{\hm}{\operatorname{Hm}}
\newcommand{\h}{\operatorname{h}}
\newcommand{\co}{\operatorname{co}}
\newcommand{\width}{\operatorname{width}}
\newcommand{\es}{\emptyset}
\newcommand{\set}[2]{\{#1:\,\text{#2}\}}
\newcommand{\sm}{\setminus}
\newcommand{\sub}{\subset}
\newcommand{\ov}{\overline}
\newcommand{\wt}{\widetilde}
\newcommand{\wh}{\widehat}
\begin{document}

\title{Inverse problem for M\"obius geometry on the circle}
\author{Sergei Buyalo\footnote{This work is supported by the RFBR grant 17-01-00128 and by
Presidium of RAS program ``New methods of mathematical modelling in non-linear dynamical systems'' 
(grant 08-04)}}

\date{}
\maketitle

 \begin{abstract} We give a solution to the inverse problem of 
M\"obius geometry on the circle. Namely, we describe a class of M\"obius
structures on the circle for each of which there is a hyperbolic space 
such that its boundary at infinity is the circle, and the induced M\"obius
structure coincides with the given one. That class is not empty and form
an open neighborhood of the canonical M\"obius structure in an appropriate
fine topology.
 \end{abstract}

\noindent{\small{\bf Keywords:} inverse problem, M\"obius structures, cross-ratio, harmonic 4-tuples}

\medskip

\noindent{\small{\bf Mathematics Subject Classification:} 51B10}

\section{Introduction}
\label{introduction}

Any (boundary continuous) hyperbolic space induces on the boundary at infinity 
a M\"obius structure which reflects most essential asymptotic properties of the space.
A {\em M\"obius structure}
$M$
on a set 
$X$
is a class of M\"obius equivalent semi-metrics on
$X$,
where two semi-metrics are equivalent if and only if they have
the same cross-ratios on every 4-tuple of points in 
$X$.
In other words, a M\"obius structure is given by cross-ratios.

The {\em inverse problem} of M\"obius geometry asks to describe M\"obius structures which are
induced by hyperbolic spaces. In this paper, we give a solution to the inverse problem
in a simplest case when the space
$X$
is the circle,
$X=S^1$.
The paper is a continuation of \cite{Bu18}, \cite{Bu19}, where the inverse problem is formulated,
and important steps toward its solution are done.

Various hyperbolic cone constructions (see \cite{BoS}, \cite{BS07}) give a hyperbolic metric space with
prescribed metric at infinity. However, no one of them is equivariant with respect to M\"obius
transformations of the metric. Thus one can consider the inverse problem as the existence problem
of an equivariant hyperbolic cone over a given metric.

We introduce a set of axioms describing M\"obius structures on the circle, which are
induced by hyperbolic spaces. We always consider {\em ptolemaic} M\"obius structures,
that is, for which every semi-metric with infinitely remote point is a metric. Our 
{\em monotonicity} axiom is somewhat stronger than that in \cite{Bu18}. 
Thus a M\"obius structure, which satisfies it, is called {\em strictly monotone}. 
As in \cite{Bu18}, we also use a key {\em Increment} axiom. For the definition and details see 
sect.~\ref{subsect:increment_axiom}.

The main result of the paper is the following

\begin{thm}\label{thm:main} Given a strictly monotone M\"obius structure
$M$
on the circle satisfying Increment axiom, there is a complete, 
proper and geodesic hyperbolic metric space
$Y$
with boundary at infinity
$\di Y=S^1$,
for which the induced M\"obius structure
$M_Y$
on
$\di Y$
is isomorphic to 
$M$, $M_Y=M$.
\end{thm}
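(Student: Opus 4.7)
The plan is to construct the hyperbolic space $Y$ directly and \emph{M\"obius-equivariantly} from the data $(S^1, M)$, which is exactly the point flagged in the introduction: standard cone constructions (Bonk--Schramm and variants) are not equivariant, and the axioms imposed on $M$ are designed to supply the additional rigidity needed for an equivariant cone to exist. Concretely, I would take as ``points'' of $Y$ appropriate equivalence classes of \emph{harmonic 4-tuples} on $S^1$, i.e.\ quadruples $(a,b,c,d)$ whose cross-ratio in $M$ is the distinguished harmonic value. In the model case $\hyp^2$, such a quadruple records two perpendicular geodesics, hence (up to rotation around the intersection) a point of $\hyp^2$; and strict monotonicity guarantees that for any pair $\{a,b\} \sub S^1$ and any side of it there exists a unique ``perpendicular mate'' realizing the harmonic relation, making this ansatz well-posed.

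Next I would use the Increment axiom of Section \ref{subsect:increment_axiom} to define an additive cocycle along the 1-parameter family of harmonic quadruples sharing a given pair $\{a,b\}$; this cocycle plays the role of a Busemann function along the would-be geodesic with endpoints $a,b$. Gluing these cocycles yields a candidate distance $\rho$ on $Y$, obtained as a symmetrized combination of Busemann increments computed from the cross-ratios of the eight endpoints defining two configurations $p,p'$. The ptolemaic assumption on $M$ is essential here for $\rho$ to satisfy the triangle inequality and thus be a genuine metric. With $\rho$ in hand the remaining steps are: verify Gromov's four-point condition to obtain hyperbolicity; derive completeness and properness from compactness of $S^1$ together with continuity of $M$ in the fine topology of the statement; and identify $\di Y = S^1$ by constructing, for each $\xi \in S^1$, the geodesic ray of harmonic quadruples whose four points all accumulate at $\xi$, and then checking $M_Y = M$ by comparing boundary cross-ratios of $\rho$ with the Increment cocycles that defined $\rho$ in the first place --- these coincide essentially tautologically.

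The principal obstacle will be the global Gromov hyperbolicity of $(Y,\rho)$. Local quantitative estimates near a single harmonic configuration reduce to strict monotonicity, but transporting them uniformly across $Y$ requires a careful interplay of the three axioms --- ptolemy, strict monotonicity, and Increment --- since the four-point condition on $Y$ translates into a cross-ratio inequality among eight points on $S^1$ that none of the three axioms delivers alone. This is presumably why the monotonicity axiom is strengthened to \emph{strict} in this paper relative to \cite{Bu18}: it provides the uniform, quantitative separation needed to control error terms in the Busemann cocycle on all scales and so to close up the four-point inequality globally.
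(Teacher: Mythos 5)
Your starting point is right and matches the paper's: the points of $Y$ are harmonic 4-tuples, and the distance is built from cross-ratio quantities (the paper's $\de$ on $\hm$ is an infimum of lengths of zigzag paths, where each side's length is a log-cross-ratio in the spirit of your ``cocycle''). That part of the setup, together with Theorem~\ref{thm:de_metric_space} from the prior work, gives completeness, properness, and geodesicity. But your proposal has a genuine gap at exactly the place you flag as ``the principal obstacle.'' You propose to prove hyperbolicity by verifying Gromov's four-point condition directly as a cross-ratio inequality among eight boundary points; you do not say how, and there is no reason to expect that the axioms (M($\al$)), (P), (I) yield such an eight-point inequality in any tractable way. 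The paper does \emph{not} attempt this.

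What the paper does instead is an indirect argument with two specific ingredients that your sketch misses entirely. First, it decomposes $\harm = \bigcup_{\om\in X}\harm_\om$ and proves (Proposition~\ref{pro:diam_quasi-lines} and Corollary~\ref{cor:uniform_cobounded}) that the diameter of every elliptic quasi-line $\wh e_\rho$ is uniformly bounded, hence every slice $\harm_\om$ is \emph{cobounded} in $\harm$. This is the technical heart of the paper (Section~\ref{sect:diameter_quasi_lines}), and it is where the sharpened bound $\al\ge\sqrt{2}-1$ in the strict monotonicity axiom is actually used (Lemma~\ref{lem:overlap}) --- not, as you guess, to ``control error terms in the Busemann cocycle.'' Second, for a fixed $\om$, the paper builds a combinatorial graph model $Z$, a \emph{hyperbolic approximation} of $X_\om$ made of harmonic chains of metric balls at geometric scales (Section~\ref{sect:hyperbolic_approximations}), proves $Z$ is $\de$-hyperbolic by elementary geodesic-combinatorics (Lemmas~\ref{lem:geodesics_in_z}--\ref{lem:de_inequality_cone_point}), and then shows $(\harm_\om,\de)$ is quasi-isometric to $Z$ (Proposition~\ref{pro:inclusion_quasi-isometry}, using the Increment axiom through the geodesicity of lines). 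Hyperbolicity of $Y$ then follows by quasi-isometry invariance. Without an analogue of these two steps --- the uniform coboundedness of $\harm_\om$ and the comparison with a concrete hyperbolic model --- your proposal has no mechanism to ``close up the four-point inequality globally,'' and the argument stalls precisely where you say it would.
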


\begin{rem}\label{rem:fine_topology} The class
$\cI$
of strictly monotone M\"obius structures on the circle which satisfy Increment
axiom contains an open in a fine topology neighborhood of the canonical
M\"obius structure
$M_0$,
see sect.~\ref{subsect:increment_axiom}. 
\end{rem}

{\em Structure of the paper}. In section~\ref{sect:moebius_structures}, we give 
a brief introduction to M\"obius structures, formulate basic axioms, including
Increment axiom, and discuss a fine topology on the set 
$\cM$
M\"obius structures satisfying our axioms.

In section~\ref{sect:lines_zzpath} we recall the notions of lines and zz-paths
associated with a given M\"obius structure
$M\in\cM$. 
After a brief discussion in sect.~\ref{sect:metric}
of the metric on the set 
$\harm$
of harmonic 4-tuples, we consider in 
sect.~\ref{sect:involutions_without_fixed_points} an important notion of involutions
without fixed points and the associated notion of elliptic quasi-lines. Given
$\om\in X=S^1$,
we consider here the set 
$\harm_\om\sub\harm$
of harmonic 4-tuples containing
$\om$.
Such sets play a important role in the proof of the main theorem.

A key technical part of the paper is section~\ref{sect:diameter_quasi_lines},
where we give an universal upper bound for the diameter of elliptic
quasi-lines. Such estimate allows to reduce the study of geometry on
the space
$\harm$
to the study of its much simpler subspaces
$\harm_\om$.

In section~\ref{sect:hyperbolic_approximations}, we discuss properties of
a hyperbolic cone construction over
$X_\om$
called the hyperbolic approximation 
$Z$
of
$X_\om$.
We show here that
$Z$
is a hyperbolic geodesic metric space. This section is based on the book \cite[Chapter~6]{BS07}.

Finally, in sect.~\ref{sect:Xquasi-isometricZ} we show that the spaces
$\harm_\om$
and
$Z$
are quasi-isometric. As a corollary, we obtain that the required filling 
$Y=\harm$
of a given M\"obius structure
$M\in\cM$
on the circle is hyperbolic. The proof essentially uses Increment axiom and results
of \cite{Bu18}.

\section{M\"obius structures}
\label{sect:moebius_structures}

\subsection{Basic notions}
\label{subsect:basics}

Let
$X$
be a set. A 4-tuple
$q=(x,y,z,u)\in X^4$
is said to be {\em admissible} if no entry occurs three or
four times in
$q$.
A 4-tuple
$q$
is {\em nondegenerate}, if all its entries are pairwise
distinct. Let
$\cP_4=\cP_4(X)$
be the set of all ordered admissible 4-tuples of
$X$, $\reg\cP_4\sub\cP_4$
the set of nondegenerate 4-tuples.

A function
$d:X^2\to\wh\R=\R\cup\{\infty\}$
is said to be a {\em semi-metric}, if it is symmetric,
$d(x,y)=d(y,x)$
for each
$x$, $y\in X$,
positive outside the diagonal, vanishes on the diagonal
and there is at most one infinitely remote point
$\om\in X$
for
$d$,
i.e. such that
$d(x,\om)=\infty$
for some
$x\in X\sm\{\om\}$.
Moreover, we require that if
$\om\in X$
is such a point, then
$d(x,\om)=\infty$
for all 
$x\in X$, $x\neq\om$.
A metric is a semi-metric that satisfies the triangle inequality.

A {\em M\"obius structure}
$M$
on
$X$
is a class of M\"obius equivalent semi-metrics on
$X$,
where two semi-metrics are equivalent if and only if they have
the same cross-ratios on every
$q\in\reg\cP_4$.

Given
$\om\in X$,
there is a semi-metric 
$d_\om\in M$
with infinitely remote point
$\om$.
It can be obtained from any semi-metric
$d\in M$
for which 
$\om$
is not infinitely remote by a {\em metric inversion},
\begin{equation}\label{eq:metric_inversion}
d_\om(x,y)=\frac{d(x,y)}{d(x,\om)d(y,\om)}. 
\end{equation}
Such a semi-metric is unique up to a homothety, see \cite{FS13},
and we use notation
$|xy|_\om=d_\om(x,y)$
for the distance between
$x$, $y\in X$
in that semi-metric. We also use notation
$X_\om=X\sm\{\om\}$.

Every M\"obius structure
$M$
on
$X$
determines the 
$M$-{\em topology}
whose subbase is given by all open balls centered at finite points
of all semi-metrics from
$M$
having infinitely remote points.

\begin{exa}\label{exa:canonical_moebius_circle} Our basic example is the 
{\em canonical} M\"obius structure 
$M_0$
on the circle
$X=S^1$.
We think of
$S^1$
as the unit circle in the plane,
$S^1=\set{(x,y)\in\R^2}{$x^2+y^2=1$}$.
For 
$\om=(0,1)\in X$
the stereographic projection
$X_\om\to\R$
identifies
$X_\om$
with real numbers 
$\R$.
We let
$d_\om$
be the standard metric on
$\R$,
that is,
$d_\om(x,y)=|x-y|$
for any
$x,y\in\R$.
This generates a M\"obius structure on
$X$
which is called {\em canonical}. The basic feature of the canonical M\"obius 
structure on
$X=S^1$
is that for any 4-tuple
$(\si,x,y,z)\sub X$
with the cyclic order 
$\si xyz$
we have 
$d_\si(x,y)+d_\si(y,z)=d_\si(x,z)$.
\end{exa}

\subsection{Harmonic pairs}
\label{subsect:harm_pairs}

From now on, we assume that 
$X$
is the circle,
$X=S^1$.
It is convenient to use unordered pairs 
$(x,y)\sim(y,x)$
of distinct points on
$X$,
and we denote their set by
$\ay=S^1\times S^1\sm\De/\sim$,
where
$\De=\set{(x,x)}{$x\in S^1$}$
is the diagonal. A pair 
$q=(a,b)\in\ay\times\ay$
is harmonic if
\begin{equation}\label{eq:harmonic}
|xz|\cdot|yu|=|xu|\cdot|yz|
\end{equation}
for some and hence any semi-metric of the M\"obius structure, where
$a=(x,y)$, $b=(z,u)$.
The pair
$a$
is called the {\em left} axis of
$q$,
while
$b$
the {\em right} axis. We denote by
$\harm$
the set of harmonic pairs,
$\harm\sub\ay\times\ay$,
of the given M\"obius structure. There is a canonical involution 
$j:\harm\to\harm$
without fixed points given by
$j(a,b)=(b,a)$.
Note that
$j$
permutes left and right axes. The quotient space we denote by
$\hm:=\harm/j$.
In other words,
$\hm$
is the set of unordered harmonic pairs of unordered pairs 
of points in
$X$,
and
$\harm$
is its 2-sheeted covering.
\begin{rem}\label{rem:2-covering_harm} Sometimes, we need a 2-sheeted
covering
$\wt\harm$
of
$\harm$,
which consists of harmonic pairs
$q=(a,b)$
with
$a=(x,y)\in S^1\times S^1\sm\De$, $b\in\ay$. 
Note that
$\wt\harm$
is homeomorphic to the tangent bundle of
$\hyp^2$.
\end{rem}

 \subsection{Axioms}
\label{subsect:axioms}

We list a set of axioms for a M\"obius structure
$M$ 
on the circle
$X=S^1$,
which needed for Theorem~\ref{thm:main}.

\begin{itemize}
 \item [(T)] Topology: $M$-topology
on
$X$
is that of
$S^1$.

\item[(M($\al$))] Monotonicity: Fix 
$1>\al\ge\sqrt{2}-1$.
Given a 4-tuple
$q=(x,y,z,u)\in X^4$
such that the pairs
$(x,y)$, $(z,u)$
separate each other, we have

$$|xy|\cdot|zu|\ge\max\{|xz|\cdot|yu|+\al|xu|\cdot|yz|,\al|xz|\cdot|yu|+|xu|\cdot|yz|\}$$
for some and hence any semi-metric from
$M$.

\item[(P)] Ptolemy: for every 4-tuple
$q=(x,y,z,u)\in X^4$
we have
$$|xy|\cdot|zu|\le |xz|\cdot|yu|+|xu|\cdot|yz|$$
for some and hence any semi-metric from
$M$.
\end{itemize}

A M\"obius structure 
$M$
on the circle
$X$
that satisfies axioms T, M($\al$), P is said to be {\em strictly monotone}.
We denote by
$\cM$
the class of strictly monotone M\"obius structures on
$X$.

\begin{rem}\label{rem:zolotov} Axiom~M($\al$) is motivated by the work
\cite{Zo18} of V.~Zolotov. It is stronger than that in
\cite{Bu19}. The lower bound for 
$\al$
is used in sect.~\ref{subsect:diam_quai-lines}.
\end{rem}

\begin{rem}\label{rem:axiom_Q} Axiom~P is satisfied, for example,
for the M\"obius structure on the boundary at infinity of any
$\CAT(-1)$
space, see \cite{FS12}.
\end{rem}

\begin{rem}\label{rem:canonical_axioms} The canonical M\"obius structure
$M_0$
on
$X=S^1$
clearly satisfies Axioms~T, M($\al$), P.
\end{rem}

We recall some immediate corollaries from the axioms, see \cite{Bu19}. It follows from axiom~(P) 
that any semi-metric from
$M$
with an infinitely remote point is a metric, i.e. it satisfies the triange inequality.

A choice of
$\om\in X$
uniquely determines the interval
$xy\sub X_\om$
for any distinct
$x$, $y\in X$
different from
$\om$
as the arc in
$X$
with the end points
$x$, $y$
that does not contain
$\om$.

We have \cite[Corollary~2.6, Corollary~2.7 ]{Bu19}.

\begin{cor}\label{cor:interval_monotone} 
Axiom M($\al$) implies the following. Assume for a nondegenerate 4-tuple
$q=(x,y,z,u)\in\reg\cP_4$
the interval
$xz\sub X_u$
is contained in
$xy$, $xz\sub xy\sub X_u$.
Then
$|xz|_u<|xy|_u$.
\end{cor}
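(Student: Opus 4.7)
The plan is to unpack the geometric content of the arc containment $xz\sub xy\sub X_u$ into a cyclic order on the circle, apply axiom~M($\al$) directly to the resulting configuration, and conclude by a single metric inversion. First I would fix a cyclic orientation on $X=S^1$. Since, by convention, $xy$ denotes the arc joining $x$ and $y$ that avoids $u$, and $z$ is assumed to lie on that arc, the cyclic order on $\{x,y,z,u\}$ must be (up to reversal) $x,z,y,u$. In this order the chord $xy$ separates $z$ from $u$, so the pairs $(x,y)$ and $(z,u)$ separate each other on the circle, which is exactly the hypothesis needed to invoke M($\al$).

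With this set-up, axiom~M($\al$) yields
\[
|xy|\cdot|zu|\ge |xz|\cdot|yu|+\al\,|xu|\cdot|yz|.
\]
Because $q\in\reg\cP_4$, the distances $|xu|$ and $|yz|$ are strictly positive, and $\al\ge\sqrt{2}-1>0$; hence the inequality is in fact strict:
\[
|xy|\cdot|zu|>|xz|\cdot|yu|.
\]
Dividing both sides by the positive quantity $|xu|\cdot|yu|\cdot|zu|$ and applying the inversion formula~\eqref{eq:metric_inversion}, I rewrite this as
\[
|xy|_u=\frac{|xy|}{|xu|\cdot|yu|}>\frac{|xz|}{|xu|\cdot|zu|}=|xz|_u,
\]
which is the desired conclusion.

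The only step that requires any thought is the first one: translating the intrinsic hypothesis $xz\sub xy\sub X_u$ about arcs on the circle into the combinatorial condition that $(x,y)$ and $(z,u)$ cross. Once this translation is made, the proof is a one-line consequence of the strict form of the monotonicity inequality, so there is no real technical obstacle. Note that the lower bound $\al\ge\sqrt{2}-1$ does not play any role here; any $\al>0$ would already force strictness. Thus Corollary~\ref{cor:interval_monotone} should be viewed as a direct calibration of M($\al$) expressed in the semi-metric $d_u$ with infinitely remote point $u$.
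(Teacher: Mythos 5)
Your argument is correct and is the natural one: translate the arc containment $xz\subset xy\subset X_u$ into the cyclic order $x,z,y,u$, note that $(x,y)$ and $(z,u)$ then separate each other, apply M($\al$) to get $|xy|\cdot|zu|\ge|xz|\cdot|yu|+\al|xu|\cdot|yz|>|xz|\cdot|yu|$, and divide through (in a semi-metric with none of the four points at infinity) to recover the statement in $d_u$ via the inversion formula. This is the same calculation the paper relies on, by reference to \cite{Bu19}. One cosmetic point worth making explicit: you should state that you evaluate the inequality in a semi-metric for which none of $x,y,z,u$ is infinitely remote (which is always possible on $S^1$), so that all six distances are finite and positive before dividing; this is implicit in your display but deserves a sentence.
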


\begin{cor}\label{cor:harm_separate} For any harmonic pair
$((x,y),(z,u))\in\harm$
the pairs
$(x,y)$, $(z,u)\in\ay$
separate each other.
\end{cor}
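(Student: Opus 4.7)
My plan is an argument by contradiction, exhausting the three cyclic configurations of four points on $S^1$ and ruling out two of them using Axiom~M($\al$).

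First I would verify that a harmonic pair $((x,y),(z,u))\in\harm$ automatically has $x,y,z,u$ pairwise distinct. Since $a=(x,y)$ and $b=(z,u)$ lie in $\ay$, already $x\neq y$ and $z\neq u$. If $\{x,y\}$ and $\{z,u\}$ shared a point, say $x=z$, then the harmonic identity $|xz|\cdot|yu|=|xu|\cdot|yz|$ would collapse to $0=|xu|\cdot|yz|$, forcing $x=u$ or $y=z$ and hence collapsing one of the pairs $a,b$. So I may assume the four points are distinct.

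Next, there are exactly three unordered partitions of $\{x,y,z,u\}$ into two pairs, and exactly one of them consists of pairs that separate each other on the circle. Suppose for contradiction that the partition $\{\{x,y\},\{z,u\}\}$ is not the separating one. Then either $(x,z),(y,u)$ separate or $(x,u),(y,z)$ separate; by symmetry it suffices to treat the first case. Applying Axiom~M($\al$) to the $4$-tuple $(x,z,y,u)$ (with separating pairs $(x,z)$ and $(y,u)$) yields
$$|xz|\cdot|yu|\ge \al|xy|\cdot|zu|+|xu|\cdot|yz|>|xu|\cdot|yz|,$$
where the strict inequality uses $\al>0$ together with the distinctness of the four points. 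This contradicts the harmonic relation $|xz|\cdot|yu|=|xu|\cdot|yz|$. The symmetric case where $(x,u)$ and $(y,z)$ separate produces, via M($\al$) on the $4$-tuple $(x,u,y,z)$, the reverse strict inequality $|xu|\cdot|yz|>|xz|\cdot|yu|$, again contradicting harmonicity. Hence $(x,y)$ and $(z,u)$ must separate each other.

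I do not expect any serious obstacle: the whole argument relies only on $\al>0$ (the sharper lower bound $\al\ge\sqrt{2}-1$ is not needed here), together with the observation that M($\al$) strictly improves on the bare Ptolemy inequality P precisely when one picks the separating configuration. The only mildly delicate preliminary point is the distinctness check, which rules out the formal possibility of a degenerate harmonic pair sharing endpoints.
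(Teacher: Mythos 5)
Your argument is correct. A small clarification worth noting: the paper does not actually prove this corollary in the text; it is imported verbatim from \cite{Bu19} (Corollary~2.7 there), so there is no in-paper proof to compare against. That said, your derivation is the natural one given the axiom system as stated here, and it is sound.

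Your distinctness check is fine: if, say, $x=z$, the harmonic relation forces $|xu|\cdot|yz|=0$, hence $x=u$ or $y=z$, either of which collapses one of the pairs $a,b\in\ay$; so $x,y,z,u$ are pairwise distinct. Your case analysis is also complete: among the three partitions of four distinct points on $S^1$ into two pairs, exactly one is the separating one, and in each of the two forbidden cases you apply Axiom~M($\al$) to the correctly reordered $4$-tuple. In Case~1 (pairs $(x,z),(y,u)$ separating), applying M($\al$) to $(x,z,y,u)$ and taking the second term of the max gives $|xz|\,|yu|\ge\al|xy|\,|zu|+|xu|\,|yz|>|xu|\,|yz|$ since $\al>0$ and the factors are positive (the axiom is a cross-ratio inequality, so one can evaluate it in a semi-metric $|\cdot|_\om$ with $\om\notin\{x,y,z,u\}$, where all distances are finite and positive); this contradicts $|xz|\,|yu|=|xu|\,|yz|$. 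Case~2 is symmetric. You also correctly observe that only $\al>0$ is used, not the sharper bound $\al\ge\sqrt2-1$. In short: a correct direct proof from Axiom~M($\al$), doing explicitly what the cited reference does.
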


\subsection{Increment axiom and a fine topology on $\cM$}
\label{subsect:increment_axiom}

Increment axiom is not used explicitly in the paper.
However, it is very important in proving that lines with respect to
a M\"obius structure are geodesic, see \cite{Bu18}. We recall it here
for convenience of the reader. For more details see \cite{Bu17}, where
it has been introduced.

The following is an alternative description of a M\"obius structure which
is convenient in many cases. For any semi-metric
$d$
on
$X$
we have three cross-ratios
$$q\mapsto \crr_1(q)=\frac{|x_1x_3||x_2x_4|}{|x_1x_4||x_2x_3|};
  \crr_2(q)=\frac{|x_1x_4||x_2x_3|}{|x_1x_2||x_3x_4|};
  \crr_3(q)=\frac{|x_1x_2||x_3x_4|}{|x_2x_4||x_1x_3|}$$
for 
$q=(x_1,x_2,x_3,x_4)\in\reg\cP_4$,
whose product equals 1, where
$|x_ix_j|=d(x_i,x_j)$.
We associate with 
$d$
a map 
$M_d:\reg\cP_4\to L_4$
defined by
\begin{equation}\label{eq:moeb_map}
M_d(q)=(\ln\crr_1(q),\ln\crr_2(q),\ln\crr_3(q)),
\end{equation}
where
$L_4\sub\R^3$
is the 2-plane given by the equation
$a+b+c=0$.
Two semi-metrics
$d$, $d'$
on
$X$
are M\"obius equivalent if and only
$M_d=M_{d'}$.
Thus a M\"obius structure on
$X$
is completely determined by a map 
$M=M_d$
for any semi-metric
$d$
of the M\"obius structure, and we often identify a M\"obius structure
with the respective map 
$M$.

In this description, axioms~(M($\al$)) and (P) are these:

M(($\al$)) Fix 
$1>\al\ge\sqrt{2}-1$.
Given a 4-tuple
$q=(x,y,z,u)\in X^4$
such that the pairs
$(x,y)$, $(z,u)$
separate each other, we have
$$\crr_3(q)\ge\max\left\{1+\frac{\al}{\crr_1(q)},\al+\frac{1}{\crr_1(q)}\right\}.$$

(P) for every 4-tuple
$q=(x,y,z,u)\in X^4$
we have
$$\crr_3(q)\le 1+\frac{1}{\crr_1(q)}.$$

We use notation
$\reg\cP_n$
for the set of ordered nondegenerate
$n$-tuples
of points in
$X=S^1$, $n\in\N$.
For 
$q\in\reg\cP_n$
and a proper subset
$I\sub\{1,\dots,n\}$
we denote by
$q_I\in\reg\cP_k$, $k=n-|I|$,
the 
$k$-tuple
obtained from
$q$
(with the induced order) by crossing out all entries which correspond to elements of
$I$.

(I) Increment Axiom: for any 
$q\in\reg\cP_7$
with cyclic order
$\co(q)=1234567$
such that 
$q_{247}$
and 
$q_{157}$
are harmonic, we have 
$$\crr_1(q_{345})>\crr_1(q_{123}).$$

It is proved in \cite[Proposition~7.10]{Bu17} that the canonical M\"obius
structure
$M_0$
on the circle
$X=S^1$
satisfies Increment Axiom. 

We define a fine topology on
$\cM$
as follows. Let
$\reg^+\cP_7\sub X^7$
be the subset of
$\reg\cP_7$
which consists of all 
$q\in\reg\cP_7$
with the cyclic order. We take on
$\reg^+\cP_7$
the topology induced from the standard topology of the 7-torus
$X^7$.
We associate with a M\"obius structure
$M\in\cM$
a section of the trivial bundle
$\reg^+\cP_7\times\R^4\to\reg^+\cP_7$
given by
$$M(q)=(q,\crr_2(q_{247}),\crr_2(q_{157}),\crr_1(q_{345}),\crr_1(q_{123}))$$
for 
$q=1234567\in\reg^+\cP_7$.
Taking the product topology on
$\reg^+\cP_7\times\R^4$,
we define the {\em fine} topology on
$\cM$
with base given by sets
$$U_V=\set{M\in\cM}{$M(\reg^+\cP_7)\sub V$},$$
where
$V$
runs over open subsets of
$\reg^+\cP_7\times\R^4$.

The class
$\cI$
of (strictly) monotone M\"obius structures on the circle which satisfy Axiom~(I)
contains an open in the fine topology neighborhood of 
$M_0$,
see \cite[Proposition~7.14]{Bu17}. 

\section{Lines and zigzag paths}
\label{sect:lines_zzpath}

Here we briefly recall definitions and some properties of lines and zigzag paths
from \cite{Bu18}, \cite{Bu19}.

\subsection{Lines}
\label{subsect:lines}

\begin{lem}\label{lem:project_point_line}\cite[Lemma~3.1]{Bu19} Given
$a\in\ay$
and
$x\in X$, $x\notin a$,
there is a uniquely determined
$y\in X$
such that the pair
$(a,b)$
is harmonic, 
$(a,b)\in\hm$,
where
$b=(x,y)$.
\end{lem}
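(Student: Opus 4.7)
The plan is to reduce the problem to a one-variable intermediate-value argument on the arc of $X$ opposite $x$, combining Corollary~\ref{cor:harm_separate} to localize the search with Corollary~\ref{cor:interval_monotone} to get strict monotonicity.

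Write $a=(p,q)$ and let $\alpha,\beta$ be the two open arcs of $X\sm\{p,q\}$, with $x\in\alpha$. By Corollary~\ref{cor:harm_separate}, any $y$ for which $(a,(x,y))\in\hm$ is harmonic must be such that $(p,q)$ and $(x,y)$ separate each other, that is $y\in\beta$. Fix an auxiliary point $\om\in\alpha\sm\{x\}$ and use the semi-metric $d_\om\in M$: then $p$, $q$, $x$ and every point of $\ov\beta=\beta\cup\{p,q\}$ is finite for $d_\om$, and the harmonic equation \eqref{eq:harmonic} for $b=(x,y)$ rewrites as
$$\frac{|qy|_\om}{|py|_\om}=\frac{|qx|_\om}{|px|_\om}=:r\in(0,\infty).$$

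Define $f:\ov\beta\to[0,\infty]$ by $f(y)=|qy|_\om/|py|_\om$ for $y\in\beta$, extended by $f(p)=\infty$ and $f(q)=0$. Axiom~(T) makes $d_\om$ continuous off the diagonal, so $f$ is continuous on $\ov\beta$ with values in $[0,\infty]$. For strict monotonicity, let $y_1,y_2\in\beta$ with $y_1$ strictly between $p$ and $y_2$ along $\beta$. Since $\om\in\alpha$, the interval $py_1\sub X_\om$ (the arc not containing $\om$) is the sub-arc of $\beta$ from $p$ to $y_1$ and is properly contained in $py_2\sub X_\om$; Corollary~\ref{cor:interval_monotone} then gives $|py_1|_\om<|py_2|_\om$. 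The symmetric argument with $q$ in place of $p$ yields $|qy_1|_\om>|qy_2|_\om$. Hence $f$ is strictly decreasing from $\infty$ at $p$ to $0$ at $q$, and by the intermediate value theorem there is a unique $y\in\beta$ with $f(y)=r$, which proves both existence and uniqueness.

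The only substantive input is the strict monotonicity of $f$, which is where Axiom~M($\al$) enters via Corollary~\ref{cor:interval_monotone}; everything else is continuity and IVT. A point to watch is the choice of $\om$: placing $\om$ in $\alpha$ (rather than in $\beta$, where the unknown $y$ varies) ensures that for every candidate $y\in\beta$ the intervals $py$ and $qy$ in $X_\om$ are precisely the natural sub-arcs of $\beta$, so the monotonicity argument applies uniformly over all of $\beta$.
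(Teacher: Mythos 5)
Your proof is correct. The paper itself does not reprove this lemma (it only cites \cite[Lemma~3.1]{Bu19}), so a direct textual comparison is not possible; but your argument matches the style the paper uses for the closely related existence statement in Lemma~\ref{lem:harmonic_pairs_ellitic}, which is also an intermediate-value argument on a ratio of distances. The reduction via Corollary~\ref{cor:harm_separate} to $y\in\beta$, the choice of $\om\in\alpha\sm\{x\}$ so that all intervals $py$, $qy$ stay inside $\ov\beta$, and the rewriting of the harmonicity equation as $|qy|_\om/|py|_\om=|qx|_\om/|px|_\om$ are all exactly right. One pleasant feature of your version is that you get uniqueness directly from the strict monotonicity supplied by Corollary~\ref{cor:interval_monotone} applied to both $|py|_\om$ (increasing) and $|qy|_\om$ (decreasing), whereas the paper's analogous Lemma~\ref{lem:harmonic_pairs_ellitic} handles uniqueness by a separate contradiction argument via strong causal relation and Lemma~\ref{lem:separate}; your route is more self-contained here, since it needs only material stated before the lemma. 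The only implicit step is the continuity of $d_\om$ off the diagonal, which you attribute to Axiom~(T); the paper relies on the same fact without comment (e.g.\ in the proof of Lemma~\ref{lem:harmonic_pairs_ellitic}), so this is consistent with the paper's conventions.
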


We denote by
$\rho_a(x)=y$
the point
$y$
from Lemma~\ref{lem:project_point_line}. The {\em line} with axis
$a\in\ay$
is defined as the set  
$\h_a\sub\hm$
which consists of all pairs
$q=(a,b)$
with
$b=(x,\rho_a(x))$
where 
$x$
run over an arc in
$X$
determined by
$a$.
This is well defined because
$\rho_a:X\to X$
is involutive,
$\rho_a^2=\id$
(we extend
$\rho_a$
to
$a=(z,u)$
by
$\rho_a(z)=z$, $\rho_u=u$). In this case, we use notation
$x_a:=b$
and say that
$x_a\in\h_a$
is the projection of
$x$
to the line
$\h_a$.

For more about lines see \cite{Bu18}. In partial, every line 
is homeomorphic to the real line 
$\R$,
different points on a line are in {\em strong causal relation}, that is,
either of them lies on an open arc in
$X$
determined by the other one, and vice versa, given 
$b$, $b'\in\ay$
in strong causal relation, there exists a unique line
$\h_a$
through
$b$, $b'$,
see \cite[Lemma~3.2, Lemma~4.2]{Bu18}. In this case, the pair
$a\in\ay$ 
(or the line
$\h_a$)
is called the {\em common perpendicular} to
$b$, $b'$.

The {\em segment}
$qq'$
of a line
$\h_a$
with
$q=(a,b)$, $q'=(a,b')\in\h_a$
is defined as the union of 
$q$, $q'$
and all 
$q''=(a,b'')\in\h_a$
such that 
$b''$
separates
$b$, $b'$.
The last means that 
$b$
and
$b'$
lie on different open arcs in
$X$
determined by
$b''$.
The points
$q$, $q'$
are the {\em ends} of
$qq'$.
The segment
$qq'\sub\h_a$
is homeomorphic to the standard segment
$[0,1]$.

\subsection{Distance between harmonic pairs with common axis}
\label{subsect:distance_harmonic_pairs}

Given two harmonic pairs in
$q$, $q'\in\hm$
with a common axis, say
$q=(a,b)$
and
$q'=(a,b')$,
we define {\em the distance}
$|qq'|$
between them as
\begin{equation}\label{eq:distance}
|qq'|=\left|\ln\frac{|xz'|\cdot|yz|}{|xz|\cdot|yz'|}\right|
\end{equation}
for some and hence any semi-metric on
$X$
from
$M$,
where
$a=(x,y)$, $b=(z,u)$, $b'=(z',u')\in\ay$.

One easily checks that every line
$\h_a\sub\hm$
with this distance is isometric to the real line
$\R$
with the standard distance.

\subsection{Zigzag paths}
\label{subsect:zigzag_paths}

Every harmonic pair
$q=(a,b)\in\hm$
has two axes. Thus moving along of a line, we have a possibility
to change the axis of the line at any moment and move along the line
determined by the other axis. This leads to the notion of zig-zag path.
A {\em zig-zag} path, or zz-path, 
$S\sub\hm$
is defined as finite (maybe empty) sequence of segments 
$\si_i$
in
$\hm$,
where consecutive segments 
$\si_i$, $\si_{i+1}$
have a common end
$q=\si_i\cap\si_{i+1}\in\hm$
with axes determined by
$\si_i$, $\si_{i+1}$.
Segments 
$\si_i$
are also called {\em sides} of
$S$,
while a {\em vertex} of
$S$
is an end of a side. Given 
$q$, $q'\in\hm$,
there is a zz-path
$S$
in
$\hm$
with at most five sides that connects
$q$
and
$q'$
(see \cite[Lemma~3.3]{Bu18}). This notion is easily lifted to
$\harm$.

\section{Metric on $\hm$ and filling of $M$}
\label{sect:metric}

\subsection{Distance $\de$ on $\hm$}
\label{subsect:dist_de}

Let
$S=\{\si_i\}$
be a zz-path in
$\hm$.
We define the length of
$S$
as the sum
$|S|=\sum_i|\si_i|$
of the length of its sides. Now, we define a distance
$\de$
on
$\hm$
by
$$\de(q,q')=\inf_S|S|,$$
where the infimum is taken over all zz-paths
$S\sub\hm$
from
$q$
to
$q'$.

One easily sees that 
$\de$
is a finite pseudometric on
$\hm$,
see \cite[Proposition~6.2]{Bu18}.
The following result is obtained in \cite{Bu18}, \cite{Bu19}.

\begin{thm}\label{thm:de_metric_space} Assume that a M\"obius structure 
$M$
on
$X=S^1$
is strictly monotone, i.e., it satisfies axioms~(T), (M($\al$)), (P). Then
$(\hm,\de)$
is a complete, proper, geodesic metric space with
$\de$-metric
topology coinciding with that induced from
$X^4$. If, in addition
$M$
satisfies Increment axiom, then every line in
$\hm$
is a geodesic.
\end{thm}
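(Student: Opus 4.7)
The plan is to establish the four assertions in sequence, reducing most of the work to a local comparison between the intrinsic metric $\de$ and the extrinsic topology on $\hm$ induced from $X^4$. Once this comparison is in hand, completeness and properness fall out of the compactness of $X^4$ and the closedness of the harmonic condition; geodesicity follows by a Hopf--Rinow argument; and the statement on lines is a separate matter invoking Increment axiom.

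First I would prove a two-sided local comparison between $\de$ and the distance inherited from $X^4$. This simultaneously shows that $\de$ is non-degenerate, i.e.\ a genuine metric on $\hm$ and not merely a pseudometric. The upper bound for $\de(q,q')$ in terms of an $X^4$-distance near $q$ uses the fact (from \cite[Lemma~3.3]{Bu18}) that any two harmonic pairs can be connected by a zz-path with at most five sides; when $q'$ is close to $q$ in $X^4$, such a zz-path can be chosen with short sides whose lengths are estimated directly from (\ref{eq:distance}). The reverse bound requires axiom~M($\al$): along any zz-path $S$, each individual side's length controls from below the cross-ratio displacement between its endpoints, so the total $X^4$-variation along $S$ is dominated by $|S|$. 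This is the step that uses the quantitative lower bound $\al\ge\sqrt{2}-1$ on monotonicity, together with the Ptolemy inequality, and I expect it to be the most delicate.

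Second, completeness and properness follow. The harmonic condition~(\ref{eq:harmonic}) is closed, so $\hm$ is closed in its natural ambient $X^4$-type space; and by the lower comparison just established, any $\de$-bounded set is contained in a region bounded away from the diagonal degenerations, hence relatively compact in $X^4$-topology. Completeness then follows since a $\de$-Cauchy sequence is also Cauchy in $X^4$-topology by the lower comparison, converges there to some $q\in\hm$ by closedness, and then converges in $\de$ by the equivalence of the two topologies. For geodesicity I would take a sequence of zz-paths $S_n$ from $q$ to $q'$ with $|S_n|\to\de(q,q')$, parametrize each at constant speed on $[0,1]$ so that the family is equi-Lipschitz, and extract via Arzel\`a--Ascoli a uniformly convergent subsequence whose limit is a Lipschitz path of length at most $\de(q,q')$, hence a geodesic.

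Finally, under the Increment axiom I would compare, for $q,q'$ on a line $\h_a$, the intrinsic parametric distance $|qq'|$ along $\h_a$ defined by (\ref{eq:distance}) with $\de(q,q')$. The inequality $\de(q,q')\le|qq'|$ is trivial since $\h_a$ is a one-side zz-path. The reverse inequality is where Increment enters: it forces the projections onto $\h_a$ of consecutive vertices of any competing zz-path to be monotone in a sharp quantitative sense, ruling out any genuine zig-zag shortcut. This is essentially proved in \cite{Bu18}, and I would invoke that result rather than reprove it. The principal obstacle remains the lower topology comparison of the first step, where the full strength of~M($\al$) with $\al\ge\sqrt{2}-1$ must be deployed carefully to dominate arbitrary zz-path displacements in $X^4$ by their total $\de$-length.
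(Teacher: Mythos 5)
The paper does not prove Theorem~\ref{thm:de_metric_space} at all: immediately above the statement it writes ``The following result is obtained in \cite{Bu18}, \cite{Bu19}'' and cites \cite[Proposition~6.2]{Bu18} for the pseudometric property, so there is no in-paper argument against which to compare yours line by line. Your overall architecture---two-sided local comparison between $\de$ and the $X^4$-topology, then completeness, properness and geodesicity by standard compactness and Hopf--Rinow/Arzel\`a--Ascoli reasoning, and finally citing \cite{Bu18} for lines being geodesics under the Increment axiom---is the natural one and is consistent with how the paper organizes the dependencies. In that sense your proposal is a reasonable reconstruction of what the cited references must do.

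Two concrete issues. First, the attribution of the lower bound $\al\ge\sqrt{2}-1$ is wrong: Remark~\ref{rem:zolotov} states explicitly that the lower bound on $\al$ is used only in sect.~\ref{subsect:diam_quai-lines} (the diameter of elliptic quasi-lines, feeding into hyperbolicity), and that the version of $\mathrm{M}(\al)$ used in \cite{Bu19} for nondegeneracy is \emph{weaker} than the one assumed here. So the nondegeneracy step in your ``reverse bound'' does not need $\al\ge\sqrt{2}-1$, and asserting that it does misreads where the quantitative strengthening enters. Second, your properness argument as written has a gap: the local Lipschitz comparison between $\de$ and the $X^4$-metric gives information near a fixed harmonic pair but does not by itself show that a $\de$-bounded set stays uniformly away from the degenerate locus, since the local comparison constants may blow up as one approaches degenerate 4-tuples. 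What is really needed is a lower bound on $\de(q,q')$ that tends to infinity as $q'$ approaches a degeneration, which is a global statement (for instance, derived from the behavior of $\de$ along lines via~(\ref{eq:distance}) together with the nondegeneracy of the zz-distance). This is exactly the nontrivial content of \cite{Bu19}---whose title advertises ``nondegeneracy of zz-distance''---and it cannot be dispatched with the one-sentence ``each side controls its displacement'' heuristic, because the delicate point is not the per-side bound but ruling out oscillatory shortcuts; this is why the SRA-free/self-contracted-curve machinery of Zolotov is invoked there. You flag this step as the most delicate, which is correct, but the sketch underestimates how much work it carries.
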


\begin{rem}\label{rem:hm_vs_harm} Since
$\harm$
is a 2-sheeted covering of
$\hm$,
all of the conclusions of Theorem~\ref{thm:de_metric_space} hold for the space
$\harm$.
\end{rem}

\subsection{Filling}
\label{subsect:filling}

Now we define a filling 
$Y$
of a strictly monotone M\"obius structure
$M$
on
$X$
as the space 
$(\hm,\de)$
of harmonic pairs in
$M$
with the distance
$\de$, $Y=(\hm,\de)$.
Our aim is to show under the assumption that 
$M$
in addition satisfies Increment axiom
$Y$
is a required in Theorem~\ref{thm:main} hyperbolic space. Sometimes, we pass to
its 2-sheeted covering
$\harm$
and use the same notation
$Y=(\harm,\de)$.

\section{Involutions of $X$ without fixed points}
\label{sect:involutions_without_fixed_points}

\subsection{Some properties}
\label{subsect:properties_fixed_points}

Involution
$\rho:X\to X$
of
$X=S^1$
is an involutive,
$\rho^2=\id$,
homeomorphism.

\begin{lem}\label{lem:separate} Let
$\rho:X\to X$
be an involution without fixed points. Then for any distinct
$x$, $y\in X$
the pairs
$a=(x,\rho(x))$, $b=(y,\rho(y))$
separate each other.
\end{lem}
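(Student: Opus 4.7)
The plan is to argue by contradiction. Suppose the pairs $a=(x,\rho(x))$ and $b=(y,\rho(y))$ do not separate each other on $X=S^1$; equivalently, $y$ and $\rho(y)$ lie in the same component of $X\sm\{x,\rho(x)\}$. Since $y\neq x$, $y\neq\rho(x)$ (otherwise $\rho(y)\in\{\rho(x),x\}$, in which case either $y$ or $\rho(y)$ coincides with one of $x,\rho(x)$ while the other is distinct, so the pairs would separate; to see this rigorously one checks cases, using that $\rho$ has no fixed points), there is a well-defined open arc $A\sub X$ with endpoints $x,\rho(x)$ containing both $y$ and $\rho(y)$, and the other open arc $B=X\sm(A\cup\{x,\rho(x)\})$ is disjoint from $\{y,\rho(y)\}$.

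Next I would exploit that $\rho$ permutes the set $\{x,\rho(x)\}$ and is a homeomorphism, so it must permute the two components $\{A,B\}$ of the complement. I split into two cases. If $\rho(A)=B$, then $\rho(y)\in\rho(A)=B$, contradicting $\rho(y)\in A$. If instead $\rho(A)=A$, then $\rho$ restricts to a continuous self-map of the closed arc $\ov A$ that interchanges its endpoints $x$ and $\rho(x)$.

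Now I would identify $\ov A$ with $[0,1]$ via a homeomorphism sending $x\mapsto 0$ and $\rho(x)\mapsto 1$, obtaining a continuous involution $f:[0,1]\to[0,1]$ with $f(0)=1$ and $f(1)=0$. The function $g(t)=f(t)-t$ satisfies $g(0)=1>0$ and $g(1)=-1<0$, so by the intermediate value theorem there is $t\in(0,1)$ with $f(t)=t$; the corresponding point of $\ov A$ is then a fixed point of $\rho$, contradicting the hypothesis. This rules out the second case and finishes the proof.

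The argument is short; the only mild obstacle is the initial bookkeeping that ensures $y,\rho(y)\notin\{x,\rho(x)\}$ so that the two arcs of $X\sm\{x,\rho(x)\}$ are unambiguously defined, after which the $\rho$-invariance of $\{A,B\}$ and the IVT do the real work. I expect no analytic or topological difficulty beyond these elementary observations.
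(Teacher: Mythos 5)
Your argument is essentially the paper's proof: both argue by contradiction, observe that if $b$ lies in a single component $A$ of $X\sm a$ then $\rho$ must preserve $A$ (since $\rho(a)=a$ and $\rho$ cannot move $b$ out of $A$), and then apply the intermediate value theorem to the endpoint-swapping involution of the closed arc $\ov A$ to produce a fixed point. The only structural difference is that you make the dichotomy $\rho(A)=A$ vs.\ $\rho(A)=B$ explicit, whereas the paper concludes $\rho(a^+)=a^+$ in one step from $\rho(b)=b\subset a^+$; these are the same observation. One small caveat: your parenthetical about the case $y=\rho(x)$ is not quite right. If $y=\rho(x)$ then $\rho(y)=x$, so $b=\{y,\rho(y)\}=\{\rho(x),x\}=a$ as unordered pairs, and a pair does not separate itself; so in that degenerate case the conclusion actually fails. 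The paper's proof tacitly excludes this case as well, so the lemma is really being used with the implicit hypothesis $a\neq b$ (equivalently $y\notin\{x,\rho(x)\}$), under which all four points are distinct and your arc decomposition is well defined.
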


\begin{proof} Assume to the contrary that there are distinct
$x$, $y\in X$
such that the respective
$a$, $b\in\ay$
do not separate each other. Let
$X=a^+\cup a^-$
decomposition of
$X$
into (closed) arcs determined by
$a$.
By the assumption,
$b$ 
lies on one of these arcs, say
$b\sub a^+$.
Since
$\rho$
is an involution, we have
$\rho(a)=a$
and
$\rho(b)=b$.
Therefore,
$\rho$
preserves 
$a^+$
permuting its ends
$x$, $\rho(x)$.
But in this case we observe a fixed point of
$\rho$
inside of
$a^+$.
This is a contradiction because
$\rho$
has no fixed points.
\end{proof}

Let
$\rho:X\to X$
be an involution without fixed points. The factor
$X/\rho$
can be identified with the subset
$$e_\rho=\set{(x,\rho(x))\in\ay}{$x\in X$}\sub\ay,$$
which is called an {\em elliptic quasi-line.}

\begin{lem}\label{lem:harmonic_pairs_ellitic} Let
$e=e_\rho$
be an elliptic quasi-line in
$\ay$.
Then for every
$s\in\ay$
there is a unique
$t\in e$
such that the 4-tuple
$(s,t)$
is harmonic.
\end{lem}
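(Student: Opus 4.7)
The plan is to reformulate the lemma as the coincidence of the two involutions $\rho$ and $\rho_s$ on a single $\rho$-orbit, and then establish existence and uniqueness by a cyclic-order / monotonicity argument on $S^1$.

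Writing $s=(x_0,y_0)$, Lemma~\ref{lem:project_point_line} shows that $t=(z,\rho(z))\in e$ with $z\notin\{x_0,y_0\}$ forms a harmonic pair with $s$ if and only if $\rho_s(z)=\rho(z)$, where $\rho_s:X\to X$ is the involution associated with axis $s$ (extended by $\rho_s(x_0)=x_0$, $\rho_s(y_0)=y_0$). Since the unordered pair $t(z)=(z,\rho(z))$ agrees with $t(\rho(z))$, solutions come in $\rho$-orbits, so the lemma reduces to producing exactly one such orbit of coincidences $\rho_s(z)=\rho(z)$.

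Applying Lemma~\ref{lem:separate} with $x=x_0$, $y=y_0$, the pairs $(x_0,\rho(x_0))$ and $(y_0,\rho(y_0))$ separate each other, so the four distinct points appear in cyclic order as $x_0,y_0,\rho(x_0),\rho(y_0)$ and cut $X$ into open arcs $A_1,A_2,A_3,A_4$. Since every orientation-reversing involution of $S^1$ has two fixed points, $\rho$ must be orientation-preserving, hence swaps $A_1\leftrightarrow A_3$ and $A_2\leftrightarrow A_4$. For $z\in A_2\cup A_4$ both $z$ and $\rho(z)$ lie on the same arc of $s$, so $t(z)$ does not separate $s$; Corollary~\ref{cor:harm_separate} then rules out these cases, and it suffices to look for $z^*\in A_1$ (the arc $A_3$ yielding the $\rho$-symmetric representative of the same $t$).

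On $A_1$, $\rho$ is an orientation-preserving homeomorphism onto $A_3$, moving $\rho(z)$ monotonically from $\rho(x_0)$ to $\rho(y_0)$ as $z$ goes from $x_0$ to $y_0$. By Corollary~\ref{cor:harm_separate}, $\rho_s$ fixes $x_0,y_0$ and swaps $A_1$ with $A^-=A_2\cup A_3\cup A_4$; continuity of $\rho_s$ at the fixed points follows from the harmonic equation~\eqref{eq:harmonic}, which forces $\rho_s(z)\to x_0$ (resp.\ $y_0$) as $z\to x_0$ (resp.\ $y_0$). Being an orientation-reversing homeomorphism, $\rho_s$ sweeps $A^-$ in the reversed cyclic sense as $z$ traverses $A_1$, so within $A_3$ it moves from $\rho(y_0)$ to $\rho(x_0)$ while $\rho$ moves from $\rho(x_0)$ to $\rho(y_0)$. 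Two continuous strictly monotone functions moving in opposite cyclic directions across $A_3$ must coincide at exactly one $z^*\in A_1$, giving the unique $t=(z^*,\rho(z^*))\in e$ with $(s,t)$ harmonic.

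The main technical point requiring care is the verification that $\rho_s$ is a homeomorphism of $X$, in particular that it extends continuously through the fixed points $x_0,y_0$; this is a routine consequence of the continuity of cross-ratios and the uniqueness in Lemma~\ref{lem:project_point_line}, and is the only ingredient beyond the cyclic-order/monotonicity argument.
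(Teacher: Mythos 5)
Your argument is correct in outline, but it takes a genuinely different route from the paper's and rests on one claim the paper never needs to establish. The paper's existence argument is a direct intermediate-value computation: on the arc $s^+$ chosen so that $\rho(s^+)$ misses $s^+$, one considers the metric ratio $f(z)=|zy_0|_{x_0}/|\rho(z)y_0|_{x_0}$, observes it is continuous with $f\to\infty$ as $z\to x_0$ and $f\to 0$ as $z\to y_0$, and extracts $z$ with $f(z)=1$. Uniqueness is then a one-line separation argument: two solutions $t,t'\in e_\rho$ would lie on the line $\h_s$, hence be in strong causal relation and not separate each other, contradicting Lemma~\ref{lem:separate}. Your reformulation as a coincidence $\rho_s(z)=\rho(z)$ on the arc $A_1$ is in fact equivalent to $f(z)=1$, and the opposite-monotonicity crossing argument proves existence and uniqueness at once, which is appealing. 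However, it genuinely requires that $\rho_s$ be an orientation-reversing homeomorphism of all of $X$, including continuity through the fixed points $x_0,y_0$. The paper never proves, and never uses, that $\rho_a$ is a homeomorphism of $X$; it is true, and provable from Axiom~(T), Corollary~\ref{cor:interval_monotone} and the uniqueness in Lemma~\ref{lem:project_point_line}, but it is a real extra step, not a side remark, so your proof is not self-contained where the paper's is. A smaller difference: the paper's uniqueness via the causal-relation dichotomy and Lemma~\ref{lem:separate} does not need strict monotonicity of $\rho_s$, so it is marginally more robust than your crossing argument. Both proofs ultimately rest on Lemma~\ref{lem:separate} together with the intermediate value theorem, so the core mechanism is shared; the difference is whether one works with the metric ratio $f$ directly or packages it as a fixed-point problem for $\rho\circ\rho_s$.
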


\begin{proof} First, we show that the image under the involution
$\rho$
of at least one of the open arcs
$s^+$, $s^-$, 
in which
$s=(x,y)$
separates
$X$,
misses that arc. Indeed, if
$\rho(x)=y$,
then
$\rho(y)=x$.
In that case, 
$\rho$
permutes the arcs
$s^+$, $s^-$
since otherwise,
$\rho(s^\pm)=s^\pm$,
and thus
$\rho$
has a fixed point.

By Lemma~\ref{lem:separate} we know that the pairs
$(x,\rho(x))$
and
$(y,\rho(y))$
separate each other. Hence,
$\rho(s)$
and
$s$
do not separate each other, and we can assume without loss of generality, that
$\rho(s)\sub s^-$.
Then
$\rho(s^+)$
misses
$s^+$
since otherwise
$\rho(s^+)\supset s^+$,
and thus
$\rho$
has a fixed point.

We denote that arc by
$s^+$
and define a function 
$f:s^+\to\R$
by
$$f(z)=\frac{|zy|_x}{|\rho(z)y|_x},$$
where recall 
$x$
is the infinitely remote point for the semi-metric
$|zu|_x$.
By the choice of
$s^+$,
we have
$\rho(z)=y$
for no
$z\in s^+$.
Thus
$f$
is continuous, 
$f(z)\to\infty$
as
$z\to x$
and
$f(z)\to 0$
as
$z\to y$.
By continuity,
$f(z)=1$
for some
$z\in s^+$.
Then the 4-tuple
$(s,t)$
is harmonic for 
$t=(z,\rho(z))\in e$.

If
$t'\in e$
is another element with harmonic
$(s,t')$,
then 
$s$
is the common perpendicular to
$t$, $t'$
and thus
$t$, $t'$
are in the strong causal relation see sect.~\ref{subsect:lines}, in particular, they  
do not separate each other. This contradicts the conclusion of
Lemma~\ref{lem:separate}.
\end{proof}

\begin{rem}\label{rem:lift_quasi-line}
Let
$\rho:X\to X$
be an involution without fixed points. Applying Lemma~\ref{lem:harmonic_pairs_ellitic}
to any
$s\in e_\rho$
we obtain a harmonic pair
$(s,t(s))\in\harm$
with both
$s$, $t(s)\in e_\rho$.
The set 
$\wh e_\rho=\set{(s,t(s))}{$s\in e_\rho$}\sub\harm$
is also called the {\em elliptic quasi-line} in
$\harm$
associated with the involution
$\rho$.
In this sense, we can lift any elliptic quasi-line
$e_\rho\sub\ay$
to the uniquely determined elliptic quasi-line
$\wh e_\rho\sub\harm$.
It follows from Lemma~\ref{lem:harmonic_pairs_ellitic} and Lemma~\ref{lem:separate}
that 
$\wh e_\rho$ 
is invariant under the involution
$j:\harm\to\harm$.
Thus we can speak about elliptic quasi-lines in
$\hm$.
\end{rem}

\subsection{Involutions associated with a harmonic 4-tuple}
\label{subsect:involution_harm}

Every harmonic 4-tuple
$q=(a,b)\in\harm$
generates a pair of involutions
$\rho_q^\pm:X\to X$
without fixed points as follows. We fix decomposition of 
$X\sm a$
into open arcs
$a^\pm$
with the common ends
$a$, $X=a^+\cup a^-\cup a$,
and define maps
$\rho_q^\pm:X\to X$
by
$$\rho_q^\pm(x)=\begin{cases}
          \rho_b\circ\rho_a(x),\ x\in\ov a^\pm\\
          \rho_a\circ\rho_b(x),\ x\in\ov a^\mp,
         \end{cases}
$$
where
$\ov a^\pm$
are respective closed arcs. Since
$\rho_b\circ\rho_a(x)=\rho_a\circ\rho_b(x)$
for 
$x=a$,
the maps 
$\rho_q^\pm$
are well defined and they are continuous involutions of
$X$
without fixed points. Since
$\rho_a(b)=b$
and
$\rho_b(a)=a$,
it follows from Lemma~\ref{lem:harmonic_pairs_ellitic} that
$q\in\wh\rho_\rho$
for 
$\rho=\rho_q^\pm$.

\begin{rem}\label{rem:noncommuting} The maps 
$\rho_a$, $\rho_b$
may not be commuting, thus
$\rho^+\neq\rho^-$
in general, and to define an involution
$\rho$
we are forced to make a choice of one of the arcs, in which
$a$ 
(or
$b$)
separates
$X$. 
\end{rem}

\subsection{Canonical decomposition of $\harm$ over $X$}
\label{subsect:canonical_decomposition}

For every
$\om\in X$
consider the set 
$\harm_\om$
which consists of all pairs 
$q=(a,b)\in\harm$
with
$\om\in a$.
Clearly,
$\harm=\cup_{\om\in X}\harm_\om$,
and for different
$\om$, $\om'\in X$
the sets
$\harm_\om$, $\harm_{\om'}$
intersect over the line
$h_{(\om,\om')}$, $\harm_\om\cap\harm_{\om'}=h_{(\om,\om')}$.

Our aim in this section is to show that every
$\harm_\om$
is cobounded in
$\harm$
uniformly in
$\om\in X$,
see Corollary~\ref{cor:uniform_cobounded}.

\subsection{Virtual projection $\harm\to\harm_\om$}
\label{subsect:construction_harm_to_harm_omega}

Involutions associated with
$q=(a,b)\in\harm$
depend on the choice of arcs
$a^+$, $a^-$,
see sect.~\ref{subsect:involution_harm}. To make that choice canonical,
we fix an orientation of the circle
$X=S^1$
and pass to the 2-sheeted covering
$\wt\harm$
of
$\harm$,
see Remark~\ref{rem:2-covering_harm}. Then for every
$q=(a,b)\in\wt\harm$, $a=(x,y)\in X^2$,
the arc 
$a^+$
is defined as the oriented arc from
$x$
to
$y$
with the orientation induced by the orientation of
$X$.
Now, we define
$\rho_q=\rho^+_q$.

\begin{lem}\label{lem:omega_projection} For every
$\om\in X$
there is a well defined retraction
$h_\om:\wt\harm\to\harm_\om$.
\end{lem}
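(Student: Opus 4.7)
The plan is to construct $h_\om$ explicitly using the canonical involution $\rho_q := \rho_q^+$ associated with $q \in \wt\harm$; the fixed orientation on $X$ makes $\rho_q^+$ unambiguous, resolving the $\pm$ ambiguity of Remark~\ref{rem:noncommuting}. Given $q = (a,b) \in \wt\harm$, I would form the pair $s := (\om, \rho_q(\om))$, which is a genuine element of $\ay$ (and in fact of $e_{\rho_q}$) since $\rho_q$ has no fixed points. Lemma~\ref{lem:harmonic_pairs_ellitic} then produces a unique $t \in e_{\rho_q}$ with $(s,t) \in \harm$, and I would set $h_\om(q) := (s,t)$. Well-definedness is immediate from the uniqueness clause of that lemma, and since $\om \in s$ the image lies in $\harm_\om$, so the target is correct.

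The substantive step is the retraction property. For $q = (a,b) \in \wt\harm$ with $\om \in a$, writing $a = (\om,y)$ in $X^2$ (the argument for the other lift $(y,\om)$ being identical), I would compute $\rho_q(\om) = y$ directly from the definition in sect.~\ref{subsect:involution_harm}. The endpoint convention $\rho_a(\om) = \om$ gives $\rho_b \circ \rho_a(\om) = \rho_b(\om)$, while the uniqueness part of Lemma~\ref{lem:project_point_line} applied to the harmonic pair $(b,a)$ forces $\rho_b(\om) = y$; the parallel calculation $\rho_a \circ \rho_b(\om) = \rho_a(y) = y$ confirms agreement of the two branches of the definition of $\rho_q^+$ at the endpoint $\om$. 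Thus $s = (\om,y) = a$ in $\ay$. Since $q$ lies on the elliptic quasi-line $\wh e_{\rho_q}$ (as recorded at the end of sect.~\ref{subsect:involution_harm}), one has $b \in e_{\rho_q}$, and because $(a,b)$ is harmonic the uniqueness in Lemma~\ref{lem:harmonic_pairs_ellitic} forces $t = b$. Hence $h_\om(q) = (a,b) = q$, the retraction identity.

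The main technical obstacle will be the bookkeeping around the endpoints of $a$: one must check carefully that $\rho_q^+$ genuinely produces the value $y$ at $\om$, consistently on both sides of $a$, and that the resulting axis $s$ coincides with $a$ as an unordered pair in $\ay$ (rather than some other pair through $\om$). Everything else reduces to two applications of the uniqueness clauses in Lemmas~\ref{lem:project_point_line} and \ref{lem:harmonic_pairs_ellitic}. Continuity of $h_\om$ is not asserted in the statement, so I would not address it in the proof of this lemma.
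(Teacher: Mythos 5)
Your construction is exactly the paper's: form $s=(\om,\rho_q^+(\om))\in e_{\rho_q}$, invoke Lemma~\ref{lem:harmonic_pairs_ellitic} for the unique $t\in e_{\rho_q}$ making $(s,t)$ harmonic, and set $h_\om(q)=(s,t)\in\harm_\om$. You additionally spell out the retraction identity (that $\rho_q(\om)=y$ forces $s=a$ and then uniqueness forces $t=b$), which the paper leaves implicit; that verification is correct and consistent with the endpoint conventions of sect.~\ref{subsect:lines} and \ref{subsect:involution_harm}.
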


\begin{proof} Given
$q=(a,b)\in\wt\harm$
we consider the quasi-elliptic line
$e=e_\rho$
associated with the involution
$\rho=\rho^+_q:X\to X$.
Then the line
$h_s\sub\harm$
with
$s=(\om,\rho(\om))\in\ay$
lies in fact in
$\harm_\om$
by the definition, 
$h_s\sub\harm_\om$.
By Lemma~\ref{lem:harmonic_pairs_ellitic}, there is a uniquely determined
$t\in e$
with
$(s,t)$
harmonic, that is,
$(s,t)\in h_s$.
Now, we put
$h_\om(q)=(s,t)$.
This canonically defines a retraction
$h_\om:\wt\harm\to\harm_\om$
which we call a {\em virtual} projection of
$\harm$
to
$\harm_\om$.
\end{proof}

\section{Diameter of elliptic quasi-lines}
\label{sect:diameter_quasi_lines}

In this section, we show that the diameter of any elliptic quasi-line in
$\harm$
is uniformly bounded above.

\subsection{Width of a strip}
\label{subsect:length_segments}

Recall, see \cite[sect.~3.3]{Bu19}, that a 4-tuple
$p=(a,b)\in X^4$
with
$a=(x,y)$, $b=(u,z)$
is a {\em strip} if
$a$, $b$
are in the strong causal relation and the pairs
$(x,z)$, $(u,y)$
separate each other. Note that
$p'=(b,c)\in X^4$
with
$b=(x,u)$, $c=(y,z)$
is also a strip based on the same 4-tuple
$(x,y,u,z)\in X^4$.

Since the pairs
$a$, $b$
are in the strong causal relation, there is uniquely determined
common perpendicular
$s=(v,w)$
to
$a$, $b$.
We use notation
$p=(a,b,s)$
for a strip with common perpendicular
$s$.
Note that 
$s$
is uniquely determined by
$(a,b)$,
and we add 
$s$
to fix notation.

We define the width of the strip
$p$
as the length
$l=\width(p)$
of the segment
$x_su_s=y_sz_s\sub\h_s$
on the line
$\h_s$.

The following estimate has been obtained in \cite[Lemma~3.2]{Bu19}.

\begin{lem}\label{lem:length_preestimate} For any strip
$p=(a,b,s)$
we have
$$\width(p)\le 2\sqrt{\frac{|xu||yz|}{|xy||zu|}},$$
where
$a=(x,y)$, $b=(u,z)$. A similar estimate holds for the associated strip
$p'=(b,c,t)$,
where 
$t$
is common perpendicular to
$b=(x,u)$, $c=(y,z)$
$$\width(p')\le 2\sqrt{\frac{|xy||zu|}{|xu||yz|}},$$
in particular,
$\width(p)\cdot\width(p')\le 4$.
\end{lem}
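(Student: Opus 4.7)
The plan is to compute $\width(p)$ directly from formula~(\ref{eq:distance}), convert the resulting logarithm into an algebraic expression via an elementary inequality, and then close the estimate using axiom~(P) together with the harmonic identities forced by the common perpendicular.

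Writing $s = (v, w)$, both $(s, a), (s, b) \in \harm$ by definition of the common perpendicular, so $x_s = (s, a)$ and $u_s = (s, b)$ are points of $\h_s$. Formula~(\ref{eq:distance}) applied to this pair with common axis $s$ yields
\[
\width(p) = \left|\ln r\right|, \qquad r := \frac{|vu|\cdot|wx|}{|vx|\cdot|wu|}.
\]
The harmonic identities $|vx|\cdot|wy| = |vy|\cdot|wx|$ (from $(s, a) \in \harm$) and $|vu|\cdot|wz| = |vz|\cdot|wu|$ (from $(s, b) \in \harm$) produce three further equivalent formulas for $r$, and their geometric mean is the symmetric form
\[
r^2 = \frac{|vu|\cdot|vz|\cdot|wx|\cdot|wy|}{|vx|\cdot|vy|\cdot|wu|\cdot|wz|}.
\]

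The elementary estimate $|\ln r| \le |\sqrt{r} - 1/\sqrt{r}|$ (a direct consequence of $(1 - 1/s)^2 \ge 0$ applied to $s = \sqrt{r}$) reduces the first claim to the algebraic inequality
\[
\left|\sqrt{r} - 1/\sqrt{r}\right| \le 2\sqrt{|xu|\cdot|yz|/(|xy|\cdot|zu|)}.
\]
To prove this, I use that the strip condition forces the cyclic order $x, u, v, z, y, w$ on $S^1$, since the common perpendicular $s$ must separate each of the pairs $(x, y)$ and $(u, z)$. Applying axiom~(P) to appropriate cyclic 4-tuples (for instance $(v, z, y, w)$ and $(v, u, x, w)$) and substituting the two harmonic identities to eliminate the $v, w$-dependence should yield the target cross-ratio bound.

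The second estimate for $\width(p')$ is the first one applied to the associated strip: swapping the pairings of the four endpoints $\{x, y, u, z\}$ inverts the cross-ratio $|xu||yz|/(|xy||zu|)$, giving the reciprocal bound, and multiplying the two produces $\width(p)\cdot\width(p') \le 4$. The main obstacle is the last algebraic step: the points $v, w$ enter only implicitly through the harmonic equations, so the Ptolemy estimates must be carefully coordinated with the harmonic identities so that all six-point quantities collapse to the target four-point cross-ratio. The factor of $2$ on the right-hand side is essentially the contribution of using both harmonic identities in tandem, one per axis.
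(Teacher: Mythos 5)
The paper does not give its own proof of this lemma: it cites \cite[Lemma~3.2]{Bu19}, so there is no internal argument to compare against, and I evaluate your sketch on its own terms.

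Your outline is sound. The computation of $\width(p)$ from formula~(\ref{eq:distance}) with common axis $s=(v,w)$, giving $\width(p)=|\ln r|$ with $r=\frac{|vu|\,|wx|}{|vx|\,|wu|}$; the two harmonic identities from $(s,a),(s,b)\in\harm$ and the resulting symmetric form for $r^2$; the elementary bound $|\ln r|\le|\sqrt{r}-1/\sqrt{r}|$; and the cyclic order $x,u,v,z,y,w$ are all correct. However, the sentence ``should yield the target cross-ratio bound,'' together with the paragraph you label the ``main obstacle,'' is exactly where the content of the lemma lives; as written the argument has a genuine gap there.

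The gap closes cleanly if you pass to the metric $|\cdot|_w$ with infinitely remote point $w$ (a metric by axiom~(P), as recalled in sect.~\ref{subsect:axioms}). In $|\cdot|_w$ the two harmonic identities become the equidistances $|vx|_w=|vy|_w=:\mu$ and $|vu|_w=|vz|_w=:\nu$, so that $r=\nu/\mu$ and
\[
\left|\sqrt{r}-\frac{1}{\sqrt{r}}\right|=\frac{|\nu-\mu|}{\sqrt{\mu\nu}}.
\]
The (reverse) triangle inequality in $|\cdot|_w$ gives $|\nu-\mu|\le|xu|_w$ and $|\nu-\mu|\le|yz|_w$, hence $|\nu-\mu|\le\sqrt{|xu|_w\,|yz|_w}$; it also gives $|xy|_w\le 2\mu$ and $|zu|_w\le 2\nu$, hence $\sqrt{\mu\nu}\ge\frac{1}{2}\sqrt{|xy|_w\,|zu|_w}$. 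Combining these,
\[
\frac{|\nu-\mu|}{\sqrt{\mu\nu}}\ \le\ 2\sqrt{\frac{|xu|_w\,|yz|_w}{|xy|_w\,|zu|_w}},
\]
which is the claimed bound because the right side is a M\"obius-invariant cross-ratio. Note that your two Ptolemy 4-tuples $(v,z,y,w)$ and $(v,u,x,w)$ supply only two of the four triangle estimates needed; you must also use $(x,v,y,w)$ and $(z,v,u,w)$. The passage from $p$ to the associated strip $p'$ (inversion of the cross-ratio) and the resulting product bound are fine as stated.
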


\subsection{Diameter of elliptic quasi-lines in $\harm$}
\label{subsect:diam_quai-lines}

\begin{pro}\label{pro:diam_quasi-lines} There is a constant
$D>0$
such that for any involution
$\rho:X\to X$
without fixed points we have
$$\diam\wh e_\rho\le D,$$
where
$\wh e_\rho\sub\harm$
is the elliptic quasi-line associated with
$\rho$,
see Remark~\ref{rem:lift_quasi-line}, and
$\diam=\diam_\de$
is taken with respect to the distance
$\de$
in
$\harm$,
see sect.~\ref{subsect:dist_de}.
\end{pro}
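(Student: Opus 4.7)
The plan is to derive a universal diameter bound by combining the width estimate from Lemma~\ref{lem:length_preestimate} with the cross-ratio bounds that axioms M($\al$) and P impose on harmonic 4-tuples.

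First I would establish a uniform bound $W=W(\al)$ on both widths associated with an arbitrary harmonic pair. For $q=(a,b)\in\harm$ with $a=(x,y)$, $b=(u,z)$, the harmonicity identity $|xz||yu|=|xu||yz|$ combined with the strict monotonicity M($\al$) and ptolemy P yields
$$(1+\al)\,|xu||yz|\le|xy||zu|\le 2\,|xu||yz|,$$
so both widths from Lemma~\ref{lem:length_preestimate} are bounded by $W=2\sqrt{2/(1+\al)}$. The lower bound $\al\ge\sqrt{2}-1$ in axiom M($\al$) (flagged in Remark~\ref{rem:zolotov}) ensures $1+\al\ge\sqrt{2}$, making both widths controllable by the same constant.

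Next, given $q=(a,b)$ and $q'=(a',b')$ in $\wh e_\rho$ (so that $a,b,a',b'\in e_\rho$), I would build a short zz-path from $q$ to $q'$ using intermediate axes taken from $e_\rho$. Since $a,a'\in e_\rho$, they separate each other by Lemma~\ref{lem:separate}, hence admit a common perpendicular $s$; by the $\rho$-equivariance of the construction, $s\in e_\rho$. This gives the candidate zz-path
$$q=(a,b)\to(a,s)\to(s,a')\to(a',b')=q',$$
whose three sides lie on $\h_a$, $\h_s$, $\h_{a'}$ respectively and whose lengths equal the widths of the strips $(b,s,a)$, $(a,a',s)$, $(s,b',a')$. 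Sides one and three involve the axis-pair $(a,b)$ respectively $(a',b')$ which are harmonic, so each is bounded by $W$ via Step~1.

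The main obstacle is the middle side, whose strip $(a,a',s)$ has axes $(a,a')$ that are generally not harmonic, so the bound from Step~1 does not directly apply. To handle this, I would exploit the reciprocal inequality $\width(p)\cdot\width(p')\le 4$ in Lemma~\ref{lem:length_preestimate}: at least one of the two widths associated to the 4-tuple underlying $(a,a',s)$ is bounded by $2$, and I would choose the zz-path (or its dual through the alternative pairing) accordingly. If necessary, I insert an additional intermediate axis $c\in e_\rho$, produced as the harmonic partner supplied by Lemma~\ref{lem:harmonic_pairs_ellitic}, to split the problematic side into sub-sides whose strips do involve a harmonic configuration. Summing a bounded number of contributions, each bounded by $\max\{W,2\}$, yields a universal constant $D$ depending only on $\al$, completing the proof.
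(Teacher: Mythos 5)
Your proposed zz-path cannot be constructed, and this is fatal to the argument. You write that since $a,a'\in e_\rho$ they ``separate each other by Lemma~\ref{lem:separate}, hence admit a common perpendicular $s$.'' But separating each other is exactly the opposite of the condition needed: as recalled in Section~\ref{subsect:lines}, a common perpendicular exists for a pair $b,b'\in\ay$ only when they are in \emph{strong causal relation}, i.e.\ when they do \emph{not} separate each other. Since $a,a'\in e_\rho$ always separate each other, the intermediate axis $s$ you want to route through does not exist, and the three sides of your zz-path are undefined. (Your Step~1 computation $(1+\al)|xu||yz|\le|xy||zu|\le 2|xu||yz|$ is correct as an arithmetic consequence of harmonicity and Axioms M($\al$), P, but it bounds a cross-ratio of the harmonic 4-tuple itself, which is not a strip; it does not directly control any of the strip widths that would appear along a zz-path.)

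The paper's proof works around precisely this obstruction by choosing the intermediate axis $(x,y)$ inside the arc between $u$ and $s$ (one end from each of the two separating axes), rather than attempting a common perpendicular to $a$ and $a'$. Concretely, it picks $x,y\in us$ so that $|ux|_t=|xy|_t=|ys|_t=h$ in the semi-metric with infinitely remote point $t$; this equipartition is the key device that lets Lemmas~\ref{lem:length_al_be_above}--\ref{lem:nu_estimates} bound the five sides of the zz-path $\mu\al\ga\be\nu$ one by one. Even then, the construction only covers the regime $sy\sub st'$, and Lemma~\ref{lem:overlap} (this is where $\al\ge\sqrt2-1$ actually enters) shows that this regime and the symmetric one obtained by swapping $(s,t)$ with $(s',t')$ together exhaust all of $\wh e_\rho$, giving $\diam\le 2D$. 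Your proposal lacks both the crucial midpoint-type choice of intermediate axis and the two-regime covering argument; the fallback of ``inserting an additional axis $c\in e_\rho$'' does not resolve the problem, since any two elements of $e_\rho$ separate each other and so the same nonexistence issue recurs.

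Finally, your appeal to ``$\rho$-equivariance'' to place $s$ in $e_\rho$ would not hold even if $s$ existed: $\rho$ is an arbitrary fixed-point-free involution of $S^1$, not assumed to be M\"obius, so it need not carry the common-perpendicular construction to itself.
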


In the proof, we use the construction from \cite[Lemma~3.3]{Bu18}, see sect.~\ref{subsect:zigzag_paths},
which gives a zz-path in
$\harm$
between given
$p$, $q\in\wh e_\rho$
consisting of 5 sides. We estimate the length of sides separately in 
Lemmas~\ref{lem:length_al_be_above}, \ref{lem:gamma_length}, \ref{lem:mu_estimates}, 
\ref{lem:nu_estimates}.

\begin{figure}[htbp]
\centering
\psfrag{t}{$t$}
\psfrag{s}{$s$}
\psfrag{u}{$u$}
\psfrag{z}{$z$}
\psfrag{x}{$x$}
\psfrag{y}{$y$}
\psfrag{c}{$c$}
\psfrag{d}{$d$}
\psfrag{e}{$e$}
\psfrag{f}{$f$}
\psfrag{v}{$v$}
\psfrag{al}{$\al$}
\psfrag{be}{$\be$}
\psfrag{ga}{$\ga$}
\includegraphics[width=0.6\columnwidth]{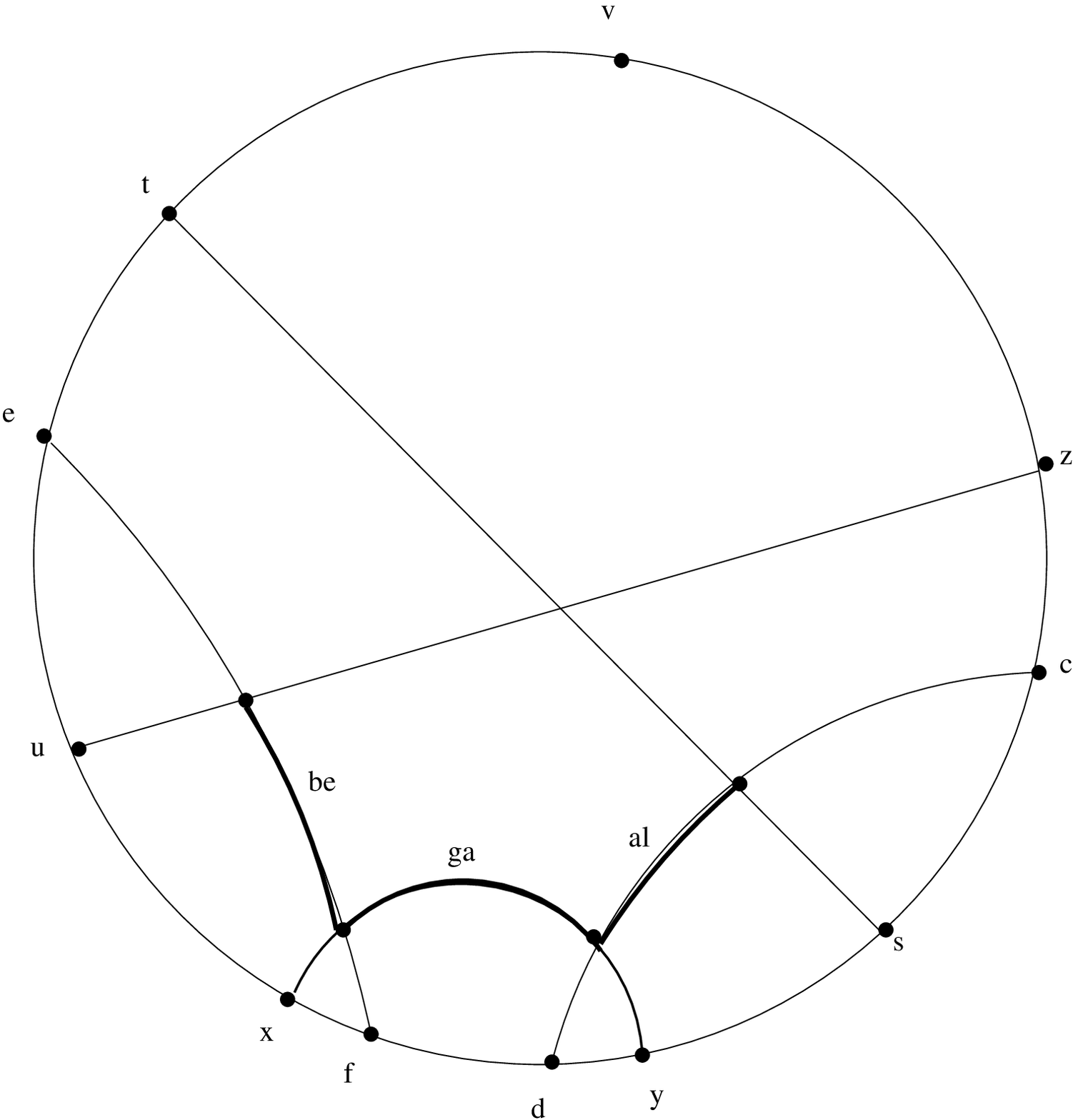}
\caption{}\label{fi:diam}
\end{figure}

Let
$(z,u)$, $(s,t)\in\ay$
be pairs which separate each other. They separate
$X$
into four open arcs. We choose one of them as follows.
Assume (without loss of generality) that
$|us||zt|\ge|zs||ut|$
(this does not depend of the choice of the metric from our M\"obius structure
$M$, 
in particular,
$|us|_t\ge|zs|_t$
in any metric 
$|\ |_t$
from
$M$
with infinitely remote point
$t$).

Then we take the arc 
$us\sub X$
between
$u$, $s$
that does not contain
$z$, $t$.
Next, we take a metric
$|\ |_t$
from
$M$
with infinitely remote point
$t$,
and take points
$x$, $y\in us$
(in the order
$uxys$)
such that
$|ux|_t=|xy|_t=|ys|_t=:h$.
It follows from continuity and monotonicity of the metric that
such points exist and they are uniquely determined.

Then the pairs
$(x,y)$, $(s,t)$
as well as the pairs
$(x,y)$, $(z,u)$
are in the strong causal relation, see sect.~\ref{subsect:lines}.
There are common perpendiculars
$(c,d)$
to the pairs
$(x,y)$, $(s,t)$,
and
$(e,f)$
to the pairs
$(x,y)$, $(z,u)$,
see Figure~\ref{fi:diam}.
These common perpendiculars
are uniquely determined, see sect.~\ref{subsect:lines}.

We estimate from above the length of the segments
$\al=x_{(c,d)}t_{(c,d)}=y_{(c,d)}s_{(c,d)}\sub\h_{(c,d)}$
and
$\be=x_{(e,f)}u_{(e,f)}=y_{(e,f)}z_{(e,f)}\sub\h_{(e,f)}$.

\begin{lem}\label{lem:length_al_be_above} In notations above we have
$|\al|\le 2$, $|\be|\le 4$.
\end{lem}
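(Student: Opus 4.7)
The plan is to apply Lemma~\ref{lem:length_preestimate} separately to the two strips $((x,y),(s,t),(c,d))$ and $((x,y),(z,u),(e,f))$, and to simplify the resulting cross-ratios by passing to the $|\cdot|_t$ metric. The key tool throughout will be the identity $|pq|=|pq|_t\cdot|pt||qt|$ (for $p,q\neq t$), inherited from~\eqref{eq:metric_inversion}; together with the M\"obius invariance of cross-ratios, it causes all distances to $t$ to cancel, reducing each bound from Lemma~\ref{lem:length_preestimate} to a ratio of $|\cdot|_t$-distances among points of $X_t$, where the equal-spacing data $|ux|_t=|xy|_t=|ys|_t=h$ is directly available.

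For the bound $|\al|\le 2$: the four vertices $x,y,s,t$ of the first strip lie on $X$ in cyclic order $x,y,s,t$, with $t$ cyclically adjacent to $x$ and $s$ cyclically adjacent to $y$. Lemma~\ref{lem:length_preestimate} (with its outer pair $(u,z)$ played by $(t,s)$) yields $|\al|\le 2\sqrt{|xt||ys|/(|xy||st|)}$. The substitution above collapses this cross-ratio to $|ys|_t/|xy|_t$, which equals $h/h=1$ by construction. Hence $|\al|\le 2$.

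For the bound $|\be|\le 4$: the vertices $x,y,z,u$ of the second strip lie in cyclic order $x,y,z,u$ (inherited from the full order $u,x,y,s,z,t$), with $u$ adjacent to $x$ and $z$ adjacent to $y$. Lemma~\ref{lem:length_preestimate} now gives $|\be|\le 2\sqrt{|xu||yz|/(|xy||zu|)}$, which simplifies to $|yz|_t/|zu|_t$ via $|xu|_t=|xy|_t=h$. The remaining task is to bound this ratio. For that I will apply axiom~M($\al$) to the 4-tuple $(y,t,u,z)$: the pairs $(y,t)$ and $(u,z)$ separate each other, since the subcyclic order of these four points is $u,y,z,t$. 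Monotonicity then yields $|yt||uz|\ge|yz||tu|+\al|yu||tz|$; dividing by $|yz||tu|$ and passing to $|\cdot|_t$ once more gives $|uz|_t\ge|yz|_t+\al|yu|_t\ge|yz|_t$. Thus $|yz|_t/|zu|_t\le 1$ and $|\be|^2\le 4$, which is stronger than the claim.

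The main obstacle is the bookkeeping required to match our strips to the labeling convention of Lemma~\ref{lem:length_preestimate}: the cross-ratio it provides is \emph{not} symmetric under swapping the two outer vertices, and the correct identification of ``$u$'' versus ``$z$'' is dictated by cyclic adjacency to the inner pair $(x,y)$. Once this is sorted out, both estimates reduce to routine simplification in the $|\cdot|_t$-metric together with one clean application of Axiom~M($\al$).
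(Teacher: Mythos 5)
Your proof is correct, and for the $|\al|$ estimate it is identical to the paper's. For $|\be|$ your route differs, and is actually both cleaner and sharper. The paper bounds the ratio $|yz|_t/|uz|_t$ indirectly: it introduces the reflection $v$ of $u$ across $\h_{(s,t)}$, uses the standing assumption $|us|_t\ge|zs|_t$ to place $v$ outside the arc $uz$, and then chains the triangle inequality with monotonicity to get $|yz|_t<4h$ and $|uz|_t>h$, hence $|\be|\le 4$. You instead observe directly that $|uz|_t\ge|yz|_t$, which gives $|\be|\le 2$; your derivation via M($\al$) applied to $(y,t,u,z)$ is correct (the pairs $(y,t)$, $(u,z)$ do separate in the cyclic order $u,x,y,s,z,t$, and after metric inversion the axiom yields $|uz|_t\ge|yz|_t+\al|yu|_t$). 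One could get the same inequality even more cheaply from Corollary~\ref{cor:interval_monotone}, since $yz\sub uz\sub X_t$ with common endpoint $z$, so invoking the full axiom is a bit more than is needed — but it is certainly valid. Your strip bookkeeping, the identification of the outer pair by cyclic adjacency to $(x,y)$, and the simplification of both cross-ratios in the $|\cdot|_t$ metric are all handled correctly. In short, a correct proof that improves the paper's constant from $4$ to $2$; the improvement would propagate to slightly smaller constants in Lemma~\ref{lem:gamma_length} and in $D$, though the paper makes no attempt to optimize these.
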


\begin{proof} For the strip
$p=(a,b,s)$, 
where
$a=(x,y)$, $b=(s,t)$, $s=(c,d)$,
we have
$|\al|=\width(p)$.
Lemma~\ref{lem:length_preestimate} gives
$|\al|\le 2\sqrt{\frac{|ys||xt|}{|xy||st|}}=2\sqrt{\frac{|ys|_t}{|xy|_t}}=2$.
 
Similarly, for the strip
$p'=(a,b',s')$,
where
$b'=(z,u)$, $s'=(e,f)$,
we have
$|\be|=\width(p')$.
Lemma~\ref{lem:length_preestimate} gives
$|\be|\le 2\sqrt{\frac{|xu||yz|}{|xy||uz}}=2\sqrt{\frac{|yz|_t}{|uz|_t}}$,
because
$|xu|_t=|xy|_t$.

Let
$v\in X$
be the point opposite to
$u$
with respect to the reflection 
$X\to X$
determined by the line
$\h_{(s,t)}$, 
i.e.
$u_{(s,t)}=v_{(s,t)}\in\h_{(s,t)}$.
Then
$|sv|_t=|us|_t\le 3h$
and
$v\not\in uz$
for the open arc 
$uz\sub X$,
that includes
$us$,
by the choice of the open arc 
$us\sub X$.
By the triange inequality and monotonicity
$|yz|_t\le|ys|_t+|sz|_t<h+|sv|_t\le 4h$, $|zu|_t>|xu|_t=h$.
Hence,
$|\be|\le 2\sqrt{\frac{4h}{h}}=4$.
\end{proof}

Next, we estimate from above the length
$|\ga|$
of the segment
$\ga=c_{(x,y)}e_{(x,y)}=d_{(x,y)}f_{(x,y)}\sub\h_{(x,y)}$
on the line 
$\h_{(x,y)}$.

\begin{lem}\label{lem:gamma_length} In notation above, we have
$|\ga|\le 6$.
\end{lem}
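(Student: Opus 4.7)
The plan is to recognize $|\gamma|$ as the width of a strip and then invoke Lemma~\ref{lem:length_preestimate}. Both pairs $(c,d)$ and $(e,f)$ are harmonic with $(x,y)$ by construction of the common perpendiculars, hence both lie on the line $\h_{(x,y)}$; assuming $(s,t)\neq (z,u)$ (the opposite case being trivial), $(c,d)$ and $(e,f)$ are distinct points of that line and therefore in strong causal relation with $(x,y)$ as their unique common perpendicular (see sect.~\ref{subsect:lines}). Thus $((c,d),(e,f),(x,y))$ is a strip whose width equals $|\gamma|$, and Lemma~\ref{lem:length_preestimate} yields
$$|\gamma|\le 2\sqrt{\frac{|ce|\cdot|df|}{|cd|\cdot|ef|}}.$$

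The next step is to estimate the cross-ratio on the right in the semi-metric $|\cdot|_t$, taking advantage of the fact that harmonic relations involving the infinitely remote point $t$ simplify dramatically there. The harmonicity of $((c,d),(s,t))$ reduces to the symmetry $|sc|_t=|sd|_t$, while $((c,d),(x,y))$ becomes $|xc|_t/|yc|_t=|xd|_t/|yd|_t$; together with the normalization $|ux|_t=|xy|_t=|ys|_t=h$ and Corollary~\ref{cor:interval_monotone} applied on the short arc $xy$ (which contains $c$ and $e$), one obtains $|ce|_t<h$ from above and a lower bound on $|cd|_t$ of order $h$ (using that $|cs|_t>h$, because the interval $ys$ is contained in $cs$ in $X_t$). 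The analogous reasoning on $(e,f)$, combined with the \emph{a priori} cross-ratio bound $|zs|_t\le 3h$ that comes from the choice of arc $us$ (equivalent to $|us||zt|\ge|zs||ut|$), yields the matching bound on $|ef|_t$ and, most importantly, controls the quotient $|df|_t/|ef|_t$. Putting these estimates together one aims for $|ce|\cdot|df|/(|cd|\cdot|ef|)\le 9$, from which $|\gamma|\le 2\sqrt{9}=6$.

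The main obstacle is bounding the quotient $|df|_t/|ef|_t$. Since $f$ lies on the arc of $(z,u)$ passing through $t$, it may be arbitrarily close to the infinitely remote point, so both $|df|_t$ and $|ef|_t$ may blow up and only their ratio is meaningful. The harmonic relation $((e,f),(z,u))$ in $|\cdot|_t$ reads $|ze|_t|uf|_t=|zf|_t|ue|_t$; combined with the constraint $|zs|_t\le 3h$ and the monotonicity axiom M($\al$) it should pin this ratio to a uniformly bounded value. The lower bound $\al\ge\sqrt{2}-1$ in M($\al$), emphasized in Remark~\ref{rem:zolotov}, is what prevents the relevant cross-ratios from degenerating, and is therefore essential to obtaining a universal constant in the final estimate.
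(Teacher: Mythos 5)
The proposal takes a genuinely different route from the paper, but it has two substantive problems.

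First, the paper's proof does not use Lemma~\ref{lem:length_preestimate} at all for $\ga$; instead it writes $e^{|\ga|}$ as an \emph{exact} cross-ratio (e.g.\ $\tfrac{|xd||yf|}{|xf||yd|}$ or $\tfrac{|xc||ye|}{|xe||yc|}$, depending on the case) and bounds each factor directly using the triangle inequality, monotonicity, and the bounds $|\al|\le 2$, $|\be|\le 4$ from Lemma~\ref{lem:length_al_be_above}, with a case analysis on whether $e\le t$ or $e>t$ and on the order of $d,f$ on the arc $xy$. Your plan replaces that exact computation by the \emph{pre-estimate} $|\ga|=\width(p)\le 2\sqrt{\tfrac{|ce||df|}{|cd||ef|}}$, and to conclude $|\ga|\le 6$ you would need the cross-ratio $\tfrac{|ce||df|}{|cd||ef|}$ to be at most $9$. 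You never prove this bound — you write ``one aims for'' it and say that M($\al$) ``should pin this ratio.'' This is the whole content of the lemma, so the proposal has a genuine gap. Moreover, the pre-estimate is loose: already in the canonical M\"obius structure (hyperbolic plane) a strip of width $\ell$ has cross-ratio $\tfrac{(e^\ell-1)^2}{4e^\ell}$, which for $\ell$ near the bound $6$ is about $100$, so cr $\le 9$ is a strictly stronger statement than $|\ga|\le 6$. Unless one can show that $|\ga|$ is in fact always substantially below $6$ (which neither you nor the paper establishes), this route cannot recover the stated constant.

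Second, there are errors in the details. You assert that the short arc $xy$ ``contains $c$ and $e$,'' and on this basis claim $|ce|_t<h$. In fact it is $d$ and $f$ that lie on the arc $xy$ (the paper states this explicitly: ``we assume that the points $d$, $f$ lie on the segment $xy\sub X_t$''), while $c$ and $e$ lie on the complementary arcs; $|ce|_t$ can be large. The subsequent estimates built on this misplacement do not stand. Finally, your remark that $\al\ge\sqrt2-1$ is ``essential'' in this lemma is misplaced: the paper uses only axiom (P) (triangle inequality) and Corollary~\ref{cor:interval_monotone} here; the quantitative lower bound on $\al$ enters later, in Lemma~\ref{lem:overlap} (and in Lemma~\ref{lem:containing_spheres}), not in the proof of $|\ga|\le 6$.
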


\begin{proof} Using notations above, we assume that the points
$d$, $f$
lie on the segment
$xy\sub X_t$.

We consider, first, the case when
$e\le t$,
that is,
$e=t$,
or
$e$
lies on the ray
$ut\sub X_t$.
In this case, the points
$d$, $f$
lies in the order
$xfdy$
on the segment
$xy\sub X_t$.
Indeed, the pairs
$(c,d)$, $(e,f)$
are in the strong causal relation being the perpendiculars to 
$(x,y)$.
Thus, the opposite assumption
$xdfy$
leads to the conclusion that the pairs
$(c,d)$
and
$(s,t)$
are in the strong causal relation. This contradicts the fact that
$(c,d)$
is a perpendicular to
$(s,t)$.

Now, we have
$$|\ga|=\ln\frac{|xd||yf|}{|xf||yd|}.$$
Note that
$|xd|_t<|xy|_t=h$
by monotonicity, because
$d$
lies in the interior of the segment
$xy\sub X_t$.
We have
$$|\al|=\ln\frac{|ds||cy|}{|dy||cs|}=\ln\frac{|cy|_t}{|dy|_t}$$
because
$|ds|_t=|cs|_t$.
By Lemma~\ref{lem:length_al_be_above} we have
$|\al|\le 2$.
Thus
$|dy|_t\ge|cy|_te^{-2}\ge|ys|_te^{-2}=he^{-2}$.
It follows that
$|xd|_t/|dy|_t\le h/(he^{-2})=e^2$.

Next, we estimate 
$|yf|_t/|xf|_t$
from above. Since
$yf\sub xy\sub X_t$,
we have
$|yf|_t<|xy|_t=h$
by monotonicity.

By Lemma~\ref{lem:length_al_be_above}, we have
$$e^{|\be|}=\frac{|uf||ex|}{|xf||eu|}\le e^4.$$
Hence
$|xf|\ge\frac{|uf||ex|}{e^4|eu|}$.
By monotonicity, we have
$|uf|_t>|ux|_t=h$
and
$|ex|_t>|eu|_t$,
where the last inequality uses the assumption
$e\le t$,
see beginning of the proof. Therefore,
$|xf|_t\ge h/e^4$,
and we conclude that
$|yf|_t/|xf|_t\le e^4$.
Hence,
$|\ga|\le\ln(e^2\cdot e^4)=6$.

Now, we consider the case
$e>t$,
that is, 
$e$
lies on the ray
$zt\sub X_t$.
In this case, we cannot garantee that the points
$d$, $f$
lies in the order
$xfdy$
on the segment
$xy\sub X_t$.
Thus we consider two subcases

(1) The points
$d$, $f$
lies in the order
$xfdy$
on the segment
$xy$.
We represent the length
$|\ga|$
as
$$e^{|\ga|}=\frac{|xc||ye|}{|xe||yc|},$$
and take a metric from
$M$
with the infinitely remote point
$u$.
We have
$xc\sub xe\sub X_u$,
thus
$|xc|_u/|xe|_u<1$,
and hence
$e^{|\ga|}\le |ye|_u/|yc|_u$.
Next, we use that
$$e^{|\be|}=\frac{|fz||ye|}{|fy||ze|}\le e^4$$
by Lemma~\ref{lem:length_al_be_above}. 
Since
$|fz|_u=|ze|_u$,
we obtain
$|ye|_u\le e^4|fy|_u$.
Since
$ys\sub yc\sub X_u$,
we have
$|ys|_u<|yc|_u$,
which gives
$e^{|\ga|}\le e^4|fy|_u/|ys|_u$.
Using the metric inversion, see (\ref{eq:metric_inversion}), we pass
to the metric with infinitely remote point
$t$
and use that
$|ys|_t=h$, $|fy|_t<|xy|_t=h$:

$$|fy|_u=\frac{|fy|_t}{|fu|_t|yu|_t}\le\frac{h}{|fu|_t|yu|_t}.$$
$$|ys|_u=\frac{|ys|_t}{|yu|_t|su|_t}=\frac{h}{|yu|_t|su|_t}.$$
Using that
$|fu|_t>|ux|_t=h$
by monotonicity and
$|su|_t\le 3h$
by the triange inequality, we finally obtain
$e^{|\ga|}\le e^4|fy|_u/|ys|_u\le e^4|su|_t/|fu|_t\le e^4\cdot3h/h=e^4\cdot 3$.
Hence,
$|\ga|\le 4+\ln 3$. 

(2) The points
$d$, $f$
lies in the order
$xdfy$
on the segment
$xy$.
Recall that the pairs
$(c,d)$
and
$(e,f)$
are in the strong causal relation, and the pairs
$(c,d)$, $(s,t)$
separate each other. Thus
$c$
lies on the ray
$et\sub X_t$
which does not contain
$d$.
Hence, this time we have
$xe\sub xc\sub X_t$
and
$$e^{|\ga|}=\frac{|xe||yc|}{|xc||ye|}.$$
By monotonicity,
$|xe|_t<|xc|_t$
and we conclude that
$e^{|\ga|}<|yc|_t/|ye|_t$.

To estimate
$|yc|_t$
from above, we use that
$$e^{|\al|}=\frac{|ds||yc|}{|dy||cs|}\le e^2$$
by Lemma~\ref{lem:length_al_be_above}. Since
$|ds|_t=|cs|_t$
and
$dy\sub xy\sub X_t$,
we obtain
$|yc|_t\le e^2|dy|_t\le e^2|xy|_t=e^2h$.
On the other hand,
$ys\sub ye\sub X_t$.
Thus
$|ye|_t>|ys|_t=h$
by monotonicity. Therefore
$e^{|\ga|}\le e^2$
and
$|\ga|\le 2$.
\end{proof}

Let
$p=((z,u),(z',u'))$, $q=((s,t),(s',t'))\in\wh e_\rho$
be given distinct harmonic pairs of pairs from
$\ay$.
Then the pairs
$(z,u)$, $(s,t)\in\ay$
separate each other being different members of the elliptic quasi-line in
$e_\rho\sub\ay$.
Assume as above (without loss of generality) that
$|us||zt|\ge|zs||ut|$.
Then we take the arc 
$us\sub X$
between
$u$, $s$
that does not contain
$z$, $t$.
We also assume that 
$t'$, $z'$
lie on the arc in
$X$
between
$s$, $t$ 
that contains
$su$.
\begin{rem}\label{rem:t_prime_in_su} In this case,
$sz'\sub st'\sub su\sub X_t$.
Indeed, since
$z=\rho(u)$, $s'=\rho(t')$,
the pairs of points
$(z,u)$, $(s',t')$
separate each other. Thus the opposite assumption
$u\in st'$
would imply
$|su|_t<|st'|_t=|ss'|_t<|sz|_t$,
a contradiction with our assumption
$|us||zt|\ge|zs||ut|$.
To show that 
$z'\in st'$,
we fix 
$q=((s,t),(s',t'))$
and move
$u$
from
$t'$
to
$t$
along the arc 
$t't\sub st$.
Then
$z'$
moves from
$s$
to
$t'$
along the arc 
$st'\sub st$.
Since
$u\in t't$
by the first part of the argument, we see that
$z'\in st'$.
\end{rem}

\begin{lem}\label{lem:mu_estimates} In notations above, assume that
$sy\sub st'\sub X_t$
(recall that 
$st'\sub su$,
see Remark~\ref{rem:t_prime_in_su}). Then
$|\mu|\le\ln 3$, 
where the segment
$\mu=d_{(s,t)}t'_{(s,t)}=c_{(s,t)}s'_{(s,t)}$
lies on the line
$h_{(s,t)}$.
\end{lem}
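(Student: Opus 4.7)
My plan is to reduce $|\mu|$ to a single ratio of distances in the metric $|\cdot|_t$, and then bound that ratio using the arc inclusions together with strict monotonicity and the triangle inequality. First I would apply formula~(\ref{eq:distance}) on the line $\h_{(s,t)}$ to the endpoints $((s,t),(c,d))$ and $((s,t),(s',t'))$ of $\mu$, which gives
$$|\mu|=\left|\ln\frac{|ss'|\cdot|tc|}{|sc|\cdot|ts'|}\right|.$$
Rewriting each distance via the inversion formula~(\ref{eq:metric_inversion}), the common factor $|st|$ cancels and the remaining factors $|s't|=|ts'|$ and $|tc|=|ct|$ cancel pairwise, leaving
$$|\mu|=\left|\ln(|ss'|_t/|sc|_t)\right|.$$
Since $((s,t),(c,d))$ and $((s,t),(s',t'))$ are harmonic and $t$ is the infinitely remote point of $|\cdot|_t$, the harmonic identity reduces to the midpoint identities $|sc|_t=|sd|_t$ and $|ss'|_t=|st'|_t$; so it remains to bound $|sd|_t$ and $|st'|_t$.

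To estimate $|sc|_t=|sd|_t$, I would invoke Lemma~\ref{lem:gamma_length}, which places $d$ in the open arc $xy$. Hence on the arc from $s$ to $u$ not containing $t$, the points appear in cyclic order $s,y,d,x,u$, giving the strict chain of arc inclusions $sy\sub sd\sub sx\sub X_t$. Two applications of Corollary~\ref{cor:interval_monotone} together with the triangle inequality (valid in $|\cdot|_t$ by axiom~(P)) yield
$$h=|sy|_t<|sd|_t<|sx|_t\le|sy|_t+|yx|_t=2h.$$

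For $|ss'|_t=|st'|_t$, the hypothesis of the lemma supplies the chain $sy\sub st'\sub su\sub X_t$ directly, and the same reasoning gives
$$h=|sy|_t<|st'|_t<|su|_t\le|sx|_t+|xu|_t\le 3h.$$
Combining the two bounds, $1/2<|ss'|_t/|sc|_t<3$, so $|\mu|<\ln 3$.

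There is no serious obstacle once the reduction $|\mu|=|\ln(|ss'|_t/|sc|_t)|$ is made: the delicate combinatorial work of locating $d$ inside $xy$ and of establishing $st'\sub su$ has already been done in Lemma~\ref{lem:gamma_length} and Remark~\ref{rem:t_prime_in_su}, so what remains is just a chain of strict-monotonicity and triangle-inequality estimates. The constant $3$ in the bound is forced by the construction $|sy|_t=h$ versus $|su|_t\le 3h$ inherent to the equal spacing of $x,y$ on the arc~$us$.
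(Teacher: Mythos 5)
Your proposal is correct and follows essentially the same route as the paper: both reduce $|\mu|$ via the harmonicity identities $|sc|_t=|sd|_t$, $|ss'|_t=|st'|_t$ to $\bigl|\ln(|sd|_t/|st'|_t)\bigr|$ and then sandwich each quantity between $h$ and a small multiple of $h$ using the arc inclusions, strict monotonicity (Corollary~\ref{cor:interval_monotone}) and the triangle inequality in $|\cdot|_t$. The only difference is cosmetic: you bound $|sd|_t<|sx|_t\le 2h$ via $sd\sub sx$, whereas the paper uses the slightly cruder $|sd|_t\le|su|_t\le 3h$ via $sd\sub su$; either way the bound on $|st'|_t\le 3h$ forces the same final constant $\ln 3$.
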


\begin{proof} We have
$$|\mu|=\left|\ln\frac{|sd||tt'|}{|st'||td|}\right|=\left|\ln\frac{|sd|_t}{|st'|_t}\right|.$$
Since
$sy\sub sd\sub su\sub X_t$,
we estimate
$h=|sy|_t\le|sd|_t\le|su|_t\le 3h$.
Since
$sy\sub st'\sub su$,
we estimate
$h=|sy|_t\le|st'|_t\le|su|_t\le 3h$. 
Thus
$|\mu|\le\ln 3$.
\end{proof}

\begin{lem}\label{lem:zz_prime_estimate} In notations above, we have 
$|zz'|_t\ge h$.
\end{lem}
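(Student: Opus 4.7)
My plan is to chain together a linear-order observation in $X_t$, two monotonicity arguments, and the harmonic identity for $q=((s,t),(s',t'))$. First, I would pin down the positions of the relevant points in $X_t\cong\R$. The movement argument at the end of Remark~\ref{rem:t_prime_in_su}, together with the fact that any fixed-point-free involution of $S^1$ is orientation preserving, shows that as $u$ traces the arc from $t'$ to $t$ the point $z=\rho(u)$ traces the arc from $s'=\rho(t')$ to $s=\rho(t)$ that avoids $t$. Combined with the inclusions $sz'\sub st'\sub su$ from Remark~\ref{rem:t_prime_in_su} and the inherited hypothesis $sy\sub st'$, this forces the linear order $s',z,s,z',t'$ on $X_t$, so that $s$ lies strictly between $z$ and $z'$.

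Next, since $sz'\sub zz'\sub X_t$ (with $s$ lying in the interior of the arc $zz'$), Corollary~\ref{cor:interval_monotone} yields the key monotonicity inequality $|zz'|_t\ge|sz'|_t$, so it is enough to prove $|sz'|_t\ge h$. For this I would invoke the harmonic identity for $q$: since $t$ is the infinitely remote point of $|\cdot|_t$, the relation $|ss'|\cdot|tt'|=|st'|\cdot|s't|$ reduces via metric inversion \eqref{eq:metric_inversion} to the simple identity $|ss'|_t=|st'|_t$, that is, $s$ is the midpoint of $s'$ and $t'$ in $|\cdot|_t$. The hypothesis $sy\sub st'$ together with Corollary~\ref{cor:interval_monotone} then gives $|st'|_t\ge|sy|_t=h$, whence $|ss'|_t\ge h$ as well.

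The final step is to bridge $|sz'|_t$ to $|st'|_t$ (equivalently, to $|ss'|_t$). Here I would apply axiom $\mathrm{M}(\al)$ to the separating 4-tuple with pairs $(z,z')$ and $(s,s')$, and combine the resulting inequality with the harmonic identity for $p$ in the metric $|\cdot|_t$, namely $|zz'|_t\cdot|uu'|_t=|zu'|_t\cdot|uz'|_t$. The arc inclusions set up in the first step, together with the positions $u'\in[t,z]$ and $z\in[s',s]$, provide the needed control on the side-products appearing in $\mathrm{M}(\al)$, and the lower bound $\al\ge\sqrt{2}-1$ is exactly what permits the resulting algebraic inequality to deliver $|zz'|_t\ge h$.

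The main obstacle is precisely this last algebraic combination. A direct monotonicity argument is insufficient: the arc inclusion $sz'\sub st'$ gives $|sz'|_t\le|st'|_t$, pointing the wrong way. Transferring the bound $|st'|_t\ge h$ onto $|sz'|_t$ requires genuinely using the harmonic symmetry encoded by $p$, which links $(z,u)$ to $(z',u')$ via the common involution $\rho$, in conjunction with the strict lower bound in $\mathrm{M}(\al)$; the careful bookkeeping of cyclic positions in $X_t$ assembled in the first step is what makes the requisite 4-tuples separating and the axioms applicable.
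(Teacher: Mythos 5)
Your reduction — that since $|zz'|_t\ge|sz'|_t$ by monotonicity it is enough to prove $|sz'|_t\ge h$ — is not valid, and you rightly flag it as the ``main obstacle.'' The setup only gives $sz'\sub st'\sub su$, hence $|sz'|_t\le|st'|_t$, an \emph{upper} bound where you need a lower one. Indeed, when $z'$ falls between $s$ and $y$ (the case $sz'\sub sy$), monotonicity gives $|sz'|_t\le|sy|_t=h$, so the target $|sz'|_t\ge h$ is simply false; the inequality you aim for is not a consequence of the hypotheses. Your final paragraph proposes closing this gap by applying $M(\al)$ to the separating pairs $(z,z')$, $(s,s')$ together with the harmonic identity for $p$, but no actual inequality is extracted; as written it is a hope, not a proof. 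The appeal to the numerical threshold $\al\ge\sqrt2-1$ is also misplaced here: in the paper that bound enters only in Lemma~\ref{lem:overlap}, not in the present lemma.

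This is also not the route the paper takes. The paper splits into two cases. When $sy\sub sz'$ your reduction works verbatim: $|zz'|_t\ge|sz'|_t\ge|sy|_t=h$. When $sz'\sub sy$ — exactly where your reduction breaks — the paper drops the comparison through $s$ altogether: it observes $ux\sub uz'$, so $|z'u|_t\ge|ux|_t=h$, then combines the harmonic identity for $p$ read in the metric $|\cdot|_t$, namely $|zz'|_t\,|uu'|_t=|zu'|_t\,|z'u|_t$, with the ptolemaic triangle inequality $|uu'|_t\le|uz'|_t+|z'z|_t+|zu'|_t$ to bound $|zz'|_t$ from below. Thus the mechanism is Axiom~(P) and the pivot is the pair $(u,z')$, not $M(\al)$ and the pair $(s,z')$; both the decomposition and the key estimate differ from what you propose. (A small further point: the harmonicity of $q$ gives $|ss'|_t=|st'|_t$, but in a general ptolemaic M\"obius structure this does not make $s$ a metric midpoint of $s'$ and $t'$, since $|s't'|_t$ need not equal $|s's|_t+|st'|_t$.)
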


\begin{proof} If
$sy\sub sz'\sub X_t$,
then
$|zz'|_t\ge|sz'|_\ge|sy|_t=h$.
Thus we assume that 
$sz'\sub sy$.
Then
$ux\sub uz'$
and hence
$|uz'|_t\ge|ux|_t=h$.

Since the pair of pairs
$p=((z,u),(z',u'))$
is harmonic, we have
$|zz'||uu'|=|zu'||z'u|$
in any metric of the M\"obius structure
$M$.
Note that 
$t$
lies on the arc in
$X$
between
$u$, $u'$
that does not contain
$z$, $z'$.
Thus we have
$|uu'|_t\le|uz'|_t+|z'z|_t+|zu'|_t$
by the triangle inequality in the metric
$|\ |_t$
with infinitely remote point
$t$.

Using notations
$|zu'|_t=:a$, $|z'u|_t=:b$, $|zz'|_t=\ep$,
we conclude that
$ab\le\ep(a+b+\ep)$.
Therefore,
$$\ep\ge\frac{a+b+\sqrt{(a+b)^2+4ab}}{2}\ge a+b>b.$$
But
$b=|z'u|_t\ge h$.
Hence
$|zz'|_t\ge h$
also in this case.
\end{proof}

\begin{lem}\label{lem:nu_estimates} In notations above, we have
$|\nu|\le\ln 18$, 
where the segment
$\nu=f_{(u,z)}z'_{(u,z)}=e_{(u,z)}u'_{(u,z)}$
lies on the line
$\h_{(u,z)}$.
\end{lem}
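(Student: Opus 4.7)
The strategy mirrors Lemma~\ref{lem:mu_estimates}. I would work in the metric $|\cdot|_t$ and apply formula~\eqref{eq:distance} with common axis $(u,z)$: since both $((u,z),(f,e))$ (recall $(e,f)$ is the common perpendicular to $(x,y)$ and $(u,z)$) and $p=((u,z),(z',u'))$ are harmonic pairs on the line $\h_{(u,z)}$, we have
\[
|\nu|=\left|\ln\frac{|uz'|_t\,|zf|_t}{|uf|_t\,|zz'|_t}\right|=:|\ln R|.
\]
Recall the normalization $u=0$, $x=h$, $y=2h$, $s=3h$ in the $X_t=\mathbb R$ parameterization; the WLOG choice $|us|_t\ge|zs|_t$ gives $z\in(3h,6h]$, and Remark~\ref{rem:t_prime_in_su} places $z'\in(t',3h)\subset(0,3h)$.

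As the direct analog of the hypothesis $sy\subset st'\subset X_t$ in Lemma~\ref{lem:mu_estimates}, I would assume $ux\subset uz'\subset X_t$; monotonicity (Corollary~\ref{cor:interval_monotone}) then gives $|uz'|_t\ge|ux|_t=h$. For the upper bound on $R$: monotonicity yields $|uz'|_t\le|us|_t=3h$ and $|zf|_t\le|zu|_t\le|zs|_t+|su|_t\le 6h$ in the numerator, while $|uf|_t\ge|ux|_t=h$ (from $f\in xy$, hence $ux\subset uf$) and $|zz'|_t\ge h$ by Lemma~\ref{lem:zz_prime_estimate} in the denominator, so $R\le 18$. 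For the lower bound: $|uz'|_t\ge h$ by assumption, $|zf|_t\ge|yz|_t=z-2h>h$ (since $f\le y$ gives $yz\subset fz$), $|uf|_t\le|uy|_t=2h$, and $|zz'|_t\le|zs|_t+|sz'|_t\le 6h$ by the triangle inequality together with $sz'\subset su$. Hence $R\ge 1/12$, and combining we get $|\ln R|\le\ln 18$.

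The delicate point is the hypothesis $|uz'|_t\ge h$: without it, $z'$ could approach $u$ along the arc $ux$, driving $R\to 0$ and $|\ln R|\to\infty$. This is the exact analog of the role played by $sy\subset st'$ in Lemma~\ref{lem:mu_estimates} (which keeps $|st'|_t\ge h$). The complementary case, in which $z'$ lies strictly inside the open arc $ux$, should be absorbed into the ambient proof of Proposition~\ref{pro:diam_quasi-lines}---either by a symmetry swap of the roles of $u$ and $s$ (and correspondingly of $z'$ and $t'$), or by replacing the $5$-sided zz-path with a shorter bounded alternative between $p$ and $q$ in $\wh e_\rho$. Apart from that case distinction, the proof is a direct tracking of monotonicity, the triangle inequality, and the harmonic-pair estimate of Lemma~\ref{lem:zz_prime_estimate}.
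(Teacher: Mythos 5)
Your formula for $|\nu|$, your normalization, and your upper bound $R\le 18$ all match the paper's computation (with minor cosmetic differences in which distances are compared), and the arithmetic for the lower bound $R\ge 1/12$ is also correct \emph{given} the bound $|uz'|_t\ge h$. The genuine gap is exactly the one you flag yourself: you promote $|uz'|_t\ge h$ to an additional hypothesis (``the direct analog of $sy\subset st'$''), and propose to push the complementary case out to the ambient proof of Proposition~\ref{pro:diam_quasi-lines}. The paper does not do this, and in fact cannot afford to: the proof of Lemma~\ref{lem:zz_path_particular} explicitly records that the hypothesis $sy\subset st'$ is used \emph{only} in the estimate for $|\mu|$, so Lemma~\ref{lem:nu_estimates} must hold without it (and without any substitute hypothesis on $z'$).

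The step you are missing is the opening paragraph of the paper's proof, which establishes $|uz'|_t\ge h$ unconditionally by splitting into two cases on $z'$. If $sz'\subset sy$, then $xy\subset uz'$ and $|uz'|_t\ge h$ immediately — this is essentially the case your argument covers. If instead $sy\subset sz'$ (which is precisely the regime in which $z'$ can drift toward $u$), the paper switches to the metric $|\cdot|_{s'}$ with infinitely remote point $s'$: monotonicity gives $|sy|_{s'}\le|sz'|_{s'}<|zz'|_{s'}$, an argument in the spirit of Lemma~\ref{lem:zz_prime_estimate} run in the metric $|\cdot|_{s'}$ gives $|uz'|_{s'}>|zz'|_{s'}$, and then the metric inversion (\ref{eq:metric_inversion}) back to $|\cdot|_t$ combined with the monotonicity comparisons $|ss'|_t<|z's'|_t$ and $|ys'|_t<|us'|_t$ yields $|uz'|_t>h$. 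This is the key idea absent from your proposal. Your two suggested repairs are not obviously viable substitutes: the $u\leftrightarrow s$ symmetry does not preserve the normalization $|us|\,|zt|\ge|zs|\,|ut|$ underlying Remark~\ref{rem:t_prime_in_su}, and replacing the $5$-sided zz-path would amount to restructuring Proposition~\ref{pro:diam_quasi-lines} rather than proving the lemma as stated.
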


\begin{proof} We first show that
$|uz'|_t\ge h$.
If
$sz'\sub sy$,
then
$xy\sub uz'$
and hence
$h=|xy|_t\le |uz'|_t$.
Thus we assume that
$sy\sub sz'$.
Then
$|sy|_{s'}\le|sz'|_{s'}<|zz'|_{s'}$.
As in Lemma~\ref{lem:zz_prime_estimate} applied to a metric
$|\ |_{s'}$ 
with infinitely remote point
$s'$,
we obtain
$|uz'|_{s'}>|zz'|_{s'}>|sy|_{s'}$.
The metric inversion with respect to
$t$
gives
$$|uz'|_{s'}=\frac{|uz'|_t}{|us'|_t|z's'|_t};\
  |sy|_{s'}=\frac{|sy|_t}{|ss'|_t|ys'|_t}.$$
Using that
$|sy|_t=h$
and by monotonicity
$|ss'|_t<|z's'|_t$, $|ys'|_t<|us'|_t$,
we obtain
$|uz'|_t>h$.

Now, using monotonicity, Lemma~\ref{lem:zz_prime_estimate} and the first part of
the proof, we have the following two-sided estimates
for 
$|uz'|_t$, $|zf|_t$, $|uf|_t$
and
$|zz'|_t$:
 
$h\le|uz'|_t\le|us|_t\le 3h$,
$h=|sy|_t\le|zf|_t\le|zu|_t\le|uv|_t\le 6h$,
$h=|ux|_t\le|uf|_t\le|uy|_t\le 2h$,
$h\le|zz'|_t\le|uz|_t\le 6h$,
where the point
$v\in X_t$
is determined in Lemma~\ref{lem:length_al_be_above}.

Since
$$|\nu|=\left|\ln\frac{|uz'||zf|}{|uf||zz'|}\right|$$
for any metric from the M\"obius structure
$M$,
this gives
$|\nu|\le\ln 18$.
\end{proof}

Now, we estimate the length
of the zz-path
$\si=\mu\al\ga\be\nu$
in a particular case, when
$sy\sub st'\sub X_t$.

\begin{lem}\label{lem:zz_path_particular} In notations at the beginning of
the section, assume that
$sy\sub st'\sub X_t$
for the zz-path
$\si=\mu\al\ga\be\nu$
between
$p=((z,u),(z',u'))$
and
$q=((s,t),(s',t'))\in\wh e_\rho$.
Then
$|\si|\le D$
with
$D=12+\ln 54<16$.
\end{lem}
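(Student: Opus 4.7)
The plan is to verify that the concatenation $\si=\mu\al\ga\be\nu$ is a bona fide zz-path from $p$ to $q$ in $\wh e_\rho$, and then to add up the length bounds already established in Lemmas~\ref{lem:length_al_be_above}, \ref{lem:gamma_length}, \ref{lem:mu_estimates}, \ref{lem:nu_estimates}. The assumption $sy\sub st'\sub X_t$ is precisely what is needed to invoke Lemma~\ref{lem:mu_estimates}; the bounds on $|\al|$, $|\be|$, $|\ga|$, $|\nu|$ follow without any additional hypothesis, since Remark~\ref{rem:t_prime_in_su} together with the normalization $|us||zt|\ge|zs||ut|$ already fixes the geometric configuration of $p$ and $q$ used in those lemmas.

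First I would check incidence of the sides. The side $\mu$ lies on $\h_{(s,t)}$ with endpoints $t'_{(s,t)}=s'_{(s,t)}$ (representing the harmonic pair $q=((s,t),(s',t'))$) and $d_{(s,t)}=c_{(s,t)}$ (representing $((s,t),(c,d))$, harmonic because $(c,d)$ is by construction the common perpendicular to $(x,y)$ and $(s,t)$). The side $\al$ lies on $\h_{(c,d)}$ with endpoints representing $((c,d),(s,t))$ and $((c,d),(x,y))$; these match the outgoing and incoming vertices of the neighbouring sides when viewed in $\hm$. Analogously one reads off that $\ga$ on $\h_{(x,y)}$ joins $((x,y),(c,d))$ to $((x,y),(e,f))$, $\be$ on $\h_{(e,f)}$ joins $((e,f),(x,y))$ to $((e,f),(u,z))$, and $\nu$ on $\h_{(u,z)}$ joins $((u,z),(e,f))$ to $((u,z),(z',u'))=p$. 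Thus consecutive sides share a vertex whose two axes are exactly those determined by the sides, so $\si$ is indeed a zz-path from $p$ to $q$.

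Once this is in place, the length estimate is just a matter of addition. Lemma~\ref{lem:length_al_be_above} gives $|\al|\le 2$ and $|\be|\le 4$; Lemma~\ref{lem:gamma_length} gives $|\ga|\le 6$ (taking the largest of the three case bounds $6$, $4+\ln 3$, $2$); Lemma~\ref{lem:mu_estimates}, which is the only place the hypothesis $sy\sub st'\sub X_t$ enters, gives $|\mu|\le\ln 3$; and Lemma~\ref{lem:nu_estimates} gives $|\nu|\le\ln 18$. Adding,
\[
|\si|=|\mu|+|\al|+|\ga|+|\be|+|\nu|\le \ln 3+2+6+4+\ln 18 = 12+\ln 54.
\]
A direct numerical check shows $\ln 54 = 3\ln 3+\ln 2 < 4$, so $D=12+\ln 54<16$, as asserted.

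There is no real obstacle; the statement is a clean bookkeeping corollary of the four preceding lemmas, and the only substantive thing to justify is the compatibility of axes at successive vertices. That verification is what makes the bound meaningful, since it is what turns the individual segmental estimates into an estimate for $\de(p,q)$.
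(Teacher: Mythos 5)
Your proof is correct and takes the same approach as the paper: sum the bounds from Lemmas~\ref{lem:length_al_be_above}--\ref{lem:nu_estimates}, noting that the hypothesis $sy\sub st'$ is needed only for Lemma~\ref{lem:mu_estimates}. The extra verification that $\mu\al\ga\be\nu$ really is a zz-path (consecutive sides sharing a vertex with the correct pair of axes) is left implicit in the paper, which relies on the construction from \cite[Lemma~3.3]{Bu18}, but spelling it out as you do is sound and does not change the argument.
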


\begin{proof} We have
$|\al|\le 2$, $|\be|\le 4$
by Lemma~\ref{lem:length_al_be_above},
$|\ga|\le 6$
by Lemma~\ref{lem:gamma_length},
$|\mu|\le\ln 3$
by Lemma~\ref{lem:mu_estimates}
and
$|\nu|\le\ln 18$
by Lemma~\ref{lem:nu_estimates}. Note that the assumption
$sy\sub st'$
is only used in the estimate for 
$|\mu|$.
Thus
$|\si|\le|\mu|+|\al|+|\ga|+|\be|+|\nu|\le D$.
\end{proof}

In notations above, assume that
the harmonic pair
$q=((s,t),(s',t'))\in\wh e_\rho$
is fixed. Then the harmonic pair
$p=((z,u),(z',u'))\in\wh e_\rho$
is uniquely determined by the point
$u$
on the arc 
$tt'\sub X$
between
$t$, $t'$
that does not contain
$s$, $s'$
because
$z=\rho(u)$
and
$(z',u')\in\ay$
is determined by
$(z,u)$,
see Lemma~\ref{lem:harmonic_pairs_ellitic}. The point
$u$
in its own turn determines
$x$, $y\in us$.
The conclusion of Lemma~\ref{lem:zz_path_particular} holds
for 
$u\in tt'$
such that
$sy\sub st'$.
This gives an upper bound for the distance
$|us|_t$,
in particular,
$u$
is separated from
$t$.
Let
$u_0\in tt'$
by maximal with this property, i.e.
$y=t'$
for 
$y=y(u_0)$.

At the moment, we do not have a required estimate of
$|\si|$
for 
$u$
on the (open) arc 
$tu_0\sub tt'$.
To fill in this gap, we apply the same construction for 
$p=((z,u),(z',u'))$
and
$q'=j(q)=((s',t'),(s,t))$
assuming without loss of generality that
$|us'||zt'|\ge|zs'||ut'|$
and choosing the arc
$s'u\sub X$
between
$s'$, $u$
that does not contain
$z$, $t'$. 
Then
$u$
determines as above
$x'$, $y'\in s'u$
with
$|ux'|_{t'}=|x'y'|_{t'}=|y's'|_{t'}=:h'$.

Now, the conclusion of Lemma~\ref{lem:zz_path_particular} holds for 
$u\in tt'$
such that
$s'y'\sub s't\sub X_{t'}$.
Let
$u_1\in tt'$ 
be maximal with this property, i.e.
$y'=t$
for 
$y'=y'(u_1)$.
We show that the subarcs
$u_0t'$
and
$u_1t$
in
$tt'$
overlap. At this point, we need the condition
$\al\ge\sqrt{2}-1$
in Axiom~M($\al$).

\begin{lem}\label{lem:overlap} In notations above the arcs
$u_0t'$, $u_1t\sub X$
overlap,
$u_0t'\cap u_1t\not=\es$.
\end{lem}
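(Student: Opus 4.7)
The plan is to translate the geometric overlap condition into a single metric inequality and then verify this inequality by applying M($\al$) to two carefully chosen 4-tuples and axiom~(P) to one more. Set $P_0 := |u_0 t'|_t$ and $P_1' := |u_1 t|_{t'}$; by the harmonicity of $((s,t),(s',t'))$ the common value $L := |st'|_t = |ss'|_t$ is well defined, and similarly $L' := |s't|_{t'} = |s's|_{t'}$.

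First I would observe that, by the strict monotonicity of the function $u \mapsto |ut'|_t$ along the arc $tt'$ (Corollary~\ref{cor:interval_monotone}), the overlap $u_0 t' \cap u_1 t \neq \es$ is equivalent to $|u_0 t'|_t \geq |u_1 t'|_t$. To re-express the right-hand side, I would evaluate the cross-ratio $(u_1,s';t,t')$ in both $|\cdot|_t$ and $|\cdot|_{t'}$; the cancellation of the ratio of the two distances to whichever point is infinitely remote in the chosen semi-metric yields $(u_1,s';t,t') = |s't'|_t/|u_1 t'|_t = P_1'/L'$, so $|u_1 t'|_t = L'|s't'|_t/P_1'$. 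The overlap condition is thus equivalent to
\[
P_0 P_1' \geq L' |s't'|_t.
\]

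The main work is to bound both sides of this inequality. For the lower bounds I would apply M($\al$) to the 4-tuple $(t', u_0, x, t)$, whose cyclic order is $t', x, u_0, t$ with separating diagonals $(t', u_0)$ and $(x,t)$: using $|t'x|_t = |xu_0|_t = L$ and the same cancellation of distances to $t$, one computes $\crr_3 = P_0/L$ and $\crr_1 = 1$, so M($\al$) gives $P_0 \geq (1+\al) L$. The mirror argument in $|\cdot|_{t'}$ applied to $(t, u_1, x', t')$ yields $P_1' \geq (1+\al) L'$. For the upper bound $|s't'|_t \leq 2L$, apply axiom~(P) to the 4-tuple $(s', t', s, t)$ (cyclic order $s',s,t',t$, separating diagonals $(s',t'),(s,t)$); the same style of computation gives $\crr_3 = |s't'|_t/L$ and $\crr_1 = 1$.

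Combining, $P_0 P_1' \geq (1+\al)^2 LL'$ while $L'|s't'|_t \leq 2 LL'$. The hypothesis $\al \geq \sqrt{2}-1$ is exactly the condition $(1+\al)^2 \geq 2$, so the overlap inequality follows. The main obstacle I anticipate is the cross-ratio bookkeeping: verifying in each case that the cyclic order makes the intended pairs the separating diagonals, and that the formal ratios of distances to the infinitely remote point cancel in the direction needed to identify $\crr_3$ and $\crr_1$ with the simple expressions above.
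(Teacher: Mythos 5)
Your proof is correct and takes essentially the same approach as the paper: the two M($\al$) applications to the 4-tuples $(t',u_0,x,t)$ and $(t,u_1,x',t')$ are exactly the paper's harmonicity observations (the paper writes $\sqrt{2}$ in place of $1+\al$ using $\al\ge\sqrt{2}-1$), and your Ptolemy bound $|s't'|_t\le 2L$ is the paper's triangle-inequality estimate. The only cosmetic difference is that by recasting the comparison in cross-ratio form (so the target inequality becomes $P_0P_1'\ge L'|s't'|_t$) you sidestep the paper's informal normalization $|tt'|_t=1$ in the metric-inversion step, obtaining the same $(1+\al)^2\ge 2$ conclusion.
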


\begin{proof} By the assumption on
$u_0$,
we have 
$h=|u_0x|_t=|xy|_t=|xt'|_t$.
Thus the pair
$((x,t),(u_0,t'))$
is harmonic. Then by Axiom~M($\al$)
$|u_0t'|_t\ge\sqrt{2}h$.
Taking the metric inversion, we obtain
$$|u_0t|_{t'}=\frac{|u_0t|_t}{|u_0t'|_t|tt'|_t}=\frac{1}{|u_0t'|_t}\le\frac{1}{\sqrt{2}h}.$$
Again, since
$|u_1x'|_{t'}=|x't|_{t'}=h'$,
the pair
$((x',t'),(u_1,t))$
is harmonic. By Axiom~M($\al$),
$|u_1t|_{t'}\ge\sqrt{2}h'$. 

We show that
$2hh'\ge 1$.
Note that
$h=|st'|_t=|ss'|_t$
by harmonicity of
$q$,
and
$h'=|s't|_{t'}=|ss'|_{t'}$
by harmonicity of
$q'=j(q)$.
Taking the metric inversion, we have
$$|ss'|_{t'}=\frac{|ss'|_t}{|st'|_t|s't'|_t}=\frac{1}{|s't'|_t}.$$
Since
$|s't'|_t\le|s's|_t+|st'|_t$
by the triange inequality, we see that
$|s't'|_t\le 2h$.
Then
$$hh'=\frac{|st'|_t}{|s't'|_t}\ge 1/2.$$
Therefore,
$2hh'\ge 1$.
Now,
$|u_1t|_{t'}\ge\sqrt{2}h'\ge\frac{1}{\sqrt{2}h}\ge|u_0t|_{t'}$.
Hence
$u_0t'\cap u_1t\not=\es$
by monotonicity.
\end{proof}

\begin{proof}[Proof of Proposition~\ref{pro:diam_quasi-lines}] We use notations
introduced above. For 
$p$, $q\in\wh e_\rho$,
$p=((z,u),(z',u'))$, $q=((s,t),(s',t'))$,
and
$x$, $y\in su\sub X_t$
with
$|ux|_t=|xy|_t=|ys|_t$,
if
$|ut'|_t\le|u_0t'|_t$,
then
$sy\sub st'$
and
$\de(p,q)\le D$
by Lemma~\ref{lem:zz_path_particular}. In particular, 
this condition is fulfilled for 
$p=q'=j(q)=((s',t'),(s,t))$
because then
$u=t'$.
Thus
$\de(q',q)\le D$.

In the opposite case,
$|ut'|_t>|u_0t'|_t$,
we have
$|ut|_{t'}\le|u_1t|_{t'}$
by Lemma~\ref{lem:overlap}. Hence
$\de(p,q')\le D$.
In this case,
$\de(p,q)\le\de(p,q')+\de(q',q)\le 2D$
by the triangle inequality. Therefore,
$\diam\wh e_\rho\le 2D$
with
$D<16$.
\end{proof}

\begin{cor}\label{cor:uniform_cobounded} The subspace
$\harm_\om\sub\harm$
is cobounded in
$\harm$
uniformly in
$\om\in X$,
that is, for any
$q\in\harm$, $\om\in X$
we have
$\dist_\de(q,\harm_\om)\le D$
for some universal constant
$D>0$.
\end{cor}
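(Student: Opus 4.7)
The plan is to deduce the corollary immediately from Proposition~\ref{pro:diam_quasi-lines}, using the virtual projection constructed in Lemma~\ref{lem:omega_projection} to produce a canonical point of $\harm_\om$ close to any prescribed $q\in\harm$.

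Given $q\in\harm$ and $\om\in X$, I would fix an arbitrary lift $\wt q\in\wt\harm$ of $q$ (two such lifts exist, corresponding to the two orientations of the left axis of $q$) and attach to it the fixed-point-free involution $\rho=\rho_{\wt q}^+$ of sect.~\ref{subsect:involution_harm}, together with the associated elliptic quasi-line $\wh e_\rho\sub\harm$.

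The key observation I will use is that both $q$ and its virtual projection $h_\om(\wt q)$ sit inside the single quasi-line $\wh e_\rho$. The first containment is the concluding remark of sect.~\ref{subsect:involution_harm}, which in turn is an application of Lemma~\ref{lem:harmonic_pairs_ellitic} to the two axes of $q$ together with the construction of $\rho$ from $\wt q$. The second is read off from the proof of Lemma~\ref{lem:omega_projection}: the projection is defined as the harmonic pair $(s,t)$ with $s=(\om,\rho(\om))\in e_\rho$ and $t\in e_\rho$ the unique element making $(s,t)$ harmonic, which is precisely the condition for $(s,t)$ to belong to $\wh e_\rho$ in the sense of Remark~\ref{rem:lift_quasi-line}.

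Once both points are located in $\wh e_\rho$, Proposition~\ref{pro:diam_quasi-lines} yields the desired estimate
$$\dist_\de(q,\harm_\om)\le\de(q,h_\om(\wt q))\le\diam_\de\wh e_\rho\le D,$$
with the universal constant $D$ of Proposition~\ref{pro:diam_quasi-lines}, independent of $q$ and $\om$. There is no real obstacle in this last step; the corollary is essentially a repackaging of Proposition~\ref{pro:diam_quasi-lines}, the role of Lemma~\ref{lem:omega_projection} being precisely to guarantee that a canonical nearby element can always be chosen inside $\harm_\om$.
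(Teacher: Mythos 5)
Your proposal is correct and follows exactly the argument given in the paper: pick the involution $\rho$ associated to (a lift of) $q$, observe that both $q$ and $h_\om(q)$ lie in the same elliptic quasi-line $\wh e_\rho$, and apply the diameter bound of Proposition~\ref{pro:diam_quasi-lines}. The only notational difference is that you spell out the lift to $\wt\harm$, which the paper leaves implicit in the phrase ``one of two involutions associated with $q$.''
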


\begin{proof} We take one of two involutions associated with
$q\in\harm$,
see sect.~\ref{subsect:construction_harm_to_harm_omega}, and denote it by
$\rho$. 
Let
$\wh e_\rho\sub\harm$
be elliptic quasi-line associated with the involution
$\rho:X\to X$.
Then
$q\in\wh e_\rho$,
see sect.~\ref{subsect:involution_harm}, and by Lemma~\ref{lem:omega_projection},
$h_\om(q)\in\wh e_\rho\cap\harm_\om$.
Thus
$\dist_\de(q,\harm_\om)\le\de(q,h_\om(q))\le\diam_\de(\wh e_\rho)\le D$
by Proposition~\ref{pro:diam_quasi-lines}.
\end{proof}

\section{Hyperbolic approximation of $X_\om$}
\label{sect:hyperbolic_approximations}

A hyperbolic approximation is a kind of a hyperbolic cone over a metric
space, see \cite{BS07}. A specific feature of a hyperbolic approximation
of a metric space is that it is defined via families of metric balls in the space
in such a way to reflect their combinatorics.

\subsection{Definition}
\label{subsect:definition}

The set 
$\harm_\om$
of harmonic 4-tuples with common entry
$\om$
can be identified with the set of metric balls in
$X_\om$.
Indeed, every
$q=((a,b),(o,\om))\in\harm_\om$
determines the sphere
$S_r(o)=(a,b)$
because
$o$
is the midpoint between
$a$, $b$, $|ao|_\om=|ob|_\om=:r$,
and hence the ball 
$B_r(o)=\set{x\in X_\om}{$|ox|_\om\le r$}\sub X_\om$
with
$\d B_r(o)=S_r(o)$.

Vice versa, given a ball
$B_r(o)\sub X_\om$
of radius
$r>0$
centered at
$o$,
we have a 4-tuple
$q=((a,b),(o,\om))$,
where
$(a,b)=\d B_r(o)$,
which is harmonic, 
$q\in\harm_\om$,
because
$o$
is the midpoint between
$a$, $b$.

A (finite or infinite) sequence of spheres
$S_r(o_i)=(a_i,b_i)\sub X_\om$
is said to be a {\em harmonic chain of radius}
$r$
if the pair
$((a_i,b_i),(a_{i+1},b_{i+1}))$
is harmonic for every
$i$. 

Assuming that an orientation of (and hence an order on)
$X_\om$
is fixed, and that
$a_i<b_i$, $a_{i+1}<b_{i+1}$, $a_i<a_{i+1}<b_i$,
we observe that 
$b_{i+1}>b_i$
because the pairs
$(a_i,b_i)$, $(a_{i+1},b_{i+1})$
separate each other and
$a_{i+1}<b_{i+1}$.
Moreover,
$o_i<a_{i+1}$
since otherwise
$b_{i+1}=\om$
or
$b_{i+1}<a_{i+1}$.
Similarly,
$b_i<o_{i+1}$.

Speaking about harmonic chains of spheres, we mean that these
assumptions are always satisfied. Note that then the pairs
$(a_i,b_i)$, $(a_{i+2},b_{i+2})$
are in strong causal relation. Indeed, this is equivalent to
$b_i<a_{i+2}$,
which is fulfilled because otherwise
$a_{i+2}\le b_i$
and hence
$a_{i+2}<o_{i+1}$.
But this contradicts the inequality
$o_{i+1}<a_{i+2}$.

We fix 
$0<\si\le 1/24$
and for every
$k\in\Z$
let
$V_k\sub\harm_\om$
be an infinite in both directions harmonic chain of radius
$r=\si^k$.
We put
$V=\cup_{k\in\Z}V_k\sub\harm_\om$
and define a {\em harmonic hyperbolic} approximation 
$Z=Z(\si)$
of
$X_\om$
with parameter
$\si$
as a graph with the vertex set 
$V$.
We consider vertices in
$V$ 
as spheres (balls) of respective harmonic chains. For any
$v\in V$
we denote
$B(v)$
the respective ball in
$X_\om$.

Two vertices
$v$, $v'\in V$
are connected by an edge if and only is they lie on one and the same level 
$V_k$
and are in this case neighboring spheres,
$v=S_r(o_i)$, $v'=S_r(o_j)$
with
$|i-j|=1$
and
$r=\si^k$,
or 
$v\in V_k$, $v'\in V_l$
with
$|k-l|=1$
and in this case the respective ball with the larger level is contained in the respective
ball with the smaller lever, i.e.
$B_r(o_i)\sub B_{r'}(o_j)$
if
$r=\si^{k+1}$, $r'=\si^k$.

An edge
$vv'\sub Z$
is called {\em horizontal}, it its vertices lie on one and the same level,
$v$, $v'\in V_k$
for some
$k\in\Z$. Other edges are called {\em radial}. The level function
$\ell:V\to\Z$
is defined by
$l(v)=k$
for 
$v\in V_k$.
Since every level
$Z_k\sub Z$, $k\in\Z$,
is connected, the graph
$Z$
is connected. We endow
$Z$
with path metric assuming that the length of every edge is 1. We denote
by
$|vv'|$
the distance between points
$v$, $v'\in V$
in
$Z$.
Note that
$Z$
is geodesic because it is connected and distances between vertices take
integer values.

\subsection{ Geodesics in $Z$}
\label{subsect:geodesics_z}

The construction of the (harmonic) hyperbolic approximation
$Z$
here is slightly different from that in \cite{BS07}. Thus
we basically follow \cite[sect.~6.2]{BS07} with appropriate adaptation
of the arguments.

\begin{lem}\label{lem:ancestor} For every
$v\in V$
there is a vertex
$w\in V$
with
$\ell(w)=\ell(v)-1$
connected with any
$v'\in V$, $\ell(v')=\ell(v)$, $|vv'|\le 1$,
by a radial edge.
\end{lem}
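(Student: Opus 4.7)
The strategy is the following. A vertex $v\in V_k$, $k=\ell(v)$, is a sphere $S_r(o)\sub X_\om$ with $r=\si^k$ and bounds the closed ball $B(v)$ of radius $r$ with respect to $|\cdot|_\om$. By the definition of a radial edge, a pair $uw$ with $\ell(w)=\ell(u)-1$ spans a radial edge of $Z$ iff $B(u)\sub B(w)$. Hence the claim reduces to producing a single vertex $w\in V_{k-1}$ whose ball $B(w)$, of radius $r':=r/\si$, simultaneously contains $B(v)$ and $B(v')$ for every horizontal neighbour $v'$ of $v$ in the chain $V_k$.

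First, I would bound the $|\cdot|_\om$-size of the union $B(v)\cup B(v_-)\cup B(v_+)$, where $v_\pm$ denote the two horizontal neighbours of $v$ in $V_k$. For horizontally adjacent $v=S_r(o)=(a,b)$ and $v'=S_r(o')=(a',b')$, the pair $((a,b),(a',b'))$ is harmonic, so $|aa'|_\om|bb'|_\om=|ab'|_\om|a'b|_\om$, and Axiom~M($\al$) applied to the separating pairs $(a,b)$, $(a',b')$ gives $|aa'|_\om|bb'|_\om\le 4r^2/(1+\al)\le 2\sqrt{2}\,r^2$. Combining this with monotonicity (Corollary~\ref{cor:interval_monotone}), Axiom~P and the triangle inequality for the metric $|\cdot|_\om$, one extracts a universal constant $\kap>0$, depending only on $\al$, with $|oo'|_\om\le\kap r$. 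Consequently $B(v)\cup B(v_-)\cup B(v_+)\sub B_{(1+\kap)r}(o)$.

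Second, I would apply the same estimate one level lower: consecutive centres of the chain $V_{k-1}$ are at $|\cdot|_\om$-distance at most $\kap r'$. Since $V_{k-1}$ is bi-infinite and its consecutive balls overlap by the definition of a harmonic chain, every point $p\in X_\om$ satisfies $|pw|_\om\le \kap r'/2$ for some $w\in V_{k-1}$. Taking $p=o$, every point of $B_{(1+\kap)r}(o)$ lies within $|\cdot|_\om$-distance $\kap r'/2+(1+\kap)r$ of this $w$. The condition for this to be at most $r'=r/\si$ reduces to $\si(1+3\kap/2)\le 1$, which holds comfortably by the choice $\si\le 1/24$ once $\kap$ is controlled as in Step~1. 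Thus $B(v)\cup B(v_-)\cup B(v_+)\sub B(w)$, so $w$ is connected by a radial edge to $v$ and to each $v_\pm$, and $\ell(w)=k-1$, as required.

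The main obstacle will be the quantitative bound $\kap$ in the first step: since the semi-metric $|\cdot|_\om$ is not linear on $X_\om$, converting the clean product estimate $|aa'|_\om|bb'|_\om\le 2\sqrt{2}\,r^2$ into a triangle-inequality bound on the midpoint-to-midpoint distance $|oo'|_\om$ requires a careful combination of harmonicity, Ptolemy and monotonicity. This is the only axiom-heavy computation in the proof; once it is done, the explicit choice $\si\le 1/24$ absorbs the resulting loss and makes Steps~2--3 routine.
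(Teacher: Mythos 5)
Your first step---bounding the size of the union $B(v)\cup B(v_-)\cup B(v_+)$ by a ball of radius $(1+\kap)r$ with a universal $\kap$---is correct and is, in spirit, also the first step of the paper's proof (the paper bounds the arc by $6r'$). But your second step has a genuine gap. The claim that ``every point $p\in X_\om$ satisfies $|pw|_\om\le \kap r'/2$ for some $w\in V_{k-1}$'' does not follow from the covering property: consecutive balls at level $k-1$ cover $X_\om$, which only gives $|pw|_\om\le r'$, not $\le\kap r'/2$. (On $X_\om$ the semi-metric is not additive along intervals, so ``consecutive centres at distance $\le\kap r'$'' does not imply ``some centre within $\kap r'/2$''; that would require something like equality in the triangle inequality, which only holds for $M_0$.) With the correct bound $|pw|_\om\le r'$, the inclusion you need becomes $r'+(1+\kap)r\le r'$, which is impossible. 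And even taking your stated bound at face value, the arithmetic is off: $\kap r'/2 + (1+\kap)r \le r'$ reads $(1+\kap)\si\le 1-\kap/2$, not $\si(1+3\kap/2)\le 1$; with the natural value $\kap=2$ (which is what monotonicity and the triangle inequality actually give for $|oo'|_\om$) the right-hand side is $0$, so the inequality is unsatisfiable for any $\si>0$.

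The missing ingredient is a quantitative \emph{lower} bound on the overlap of consecutive balls at level $k-1$, not just the fact that they overlap. This is precisely what the paper extracts from harmonicity of the chain: for adjacent $(c,d),(c',d')\in V_{k-1}$, the identity $|c'd|_\om|cd'|_\om=|cc'|_\om|dd'|_\om$ together with $|cc'|_\om,|dd'|_\om\ge r'$ and $|cd'|_\om\le 4r'$ gives $|c'd|_\om\ge r'/4$. Once this is in hand, the conclusion follows by a short covering argument that sidesteps the issue of where $o$ sits inside a covering ball: the arc $a'b''$ of length $\le 6r=6\si r'\le r'/4$ is covered by at most two consecutive level-$(k-1)$ balls, and since its length does not exceed their overlap it must lie entirely inside one of them. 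I would replace your Step~2--3 by this overlap estimate and covering argument; the role of the hypothesis $\si\le 1/24$ is exactly to make $6\si r'\le r'/4$.
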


\begin{proof} There are two neighbors
$v'$, $v''$
of
$v$
in
$Z$,
sitting on the same level as
$v$,
$|vv'|$, $|vv''|\le 1$.
One of them,
$v'$,
is on the left to
$v$,
the other one,
$v''$
is on the right to
$v$.
Let
$v'=(a',b')$, $v'=(a'',b'')$.
Then
$|a'b''|_\om\le 6r'$, 
where
$r'=\si^{k+1}$ 
for
$k+1=\ell(v)$.

On the other hand, for every neighboring
$w$, $w'\in V_k$,
$w=(c,d)$, $w'=(c',d')$,
the pair
$((c,d),(c',d'))$
is harmonic. Thus
$|c'd|_\om|cd'|_\om=|cc'|_\om|dd'|_\om$.
Hence
\begin{equation}\label{eq:harmonic_below}
|c'd|_\om=\frac{|cc'|_\om|dd'|_\om}{|cd'|_\om}\ge\frac{r}{4} 
\end{equation}
for 
$r=\si^k$
because
$|cc'|_\om\ge r$, $|dd'|_\om\ge r$
and
$|cd'|_\om\le 4r$. 

For the neighbors
$v'$, $v''$
of
$v$
we have
$v'\cup v\cup v''=a'b''\sub X_\om$.
Since
$\si=r'/r\le 1/24$,
we have
$|a'b''|_\om\le 6r'\le r/4$.
The balls
$\{w\in V_k\}$
cover
$X_\om$.
Assume that there is
$w\in V_k$
such that
$(v'\cup v\cup v'')\sub w$.
Then the vertices
$v$, $v'$, $v''\in Z$
are connected with
$w$
by radial edges. 

Otherwise 
$a'b''$
is covered by no
$w\in V_k$.
Then there are at most two neighboring 
$w=(c,d)$, $w'=(c',d')\in V_k$
which cover
$a'b''$, $a'b''\sub cd\cup c'd'$.
Assuming that 
$w$
is left to
$w'$,
we observe that the intersection
$w\cap w'=c'd$.
Since
$|a'b''|_\om\le |c'd|_\om$
by the estimate above, we see 
that
$a'b''$
is contained in one of
$w$, $w'$
in contradiction with our assumption. 
\end{proof}

\begin{lem}\label{lem:geodesics_in_z} Any vertices
$v$, $v'\in V$
can be connected in
$Z$
by a geodesic
$\ga$
which consists of at most two radial subsegments
$\ga'$, $\ga''\sub\ga$
and at most one horizontal edge between them. If there is such an edge, 
then it lies on the lowest level of the geodesic. Otherwise the unique
common vertex
$w$
of
$\ga'$, $\ga''$
is the lowest level vertex of
$\ga$.
\end{lem}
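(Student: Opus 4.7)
Plan. I adapt the argument of \cite[Section 6.2]{BS07}, using Lemma~\ref{lem:ancestor} to control the combinatorial structure of geodesics in two stages: first, that every geodesic has a unimodal level function so decomposes as a radial descent, a horizontal plateau $w_0,\ldots,w_m$ at the single lowest level $k^*$, and a radial ascent; second, that this plateau can be rewritten so that $m\le 1$.

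Stage (A) --- unimodality. I claim $\ell$ along any geodesic $\gamma=(v_0,\ldots,v_n)$ has no interior local maximum, strict or weak. For a strict peak $\ell(v_i) > \max(\ell(v_{i-1}),\ell(v_{i+1}))$, the vertices $v_{i-1}$ and $v_{i+1}$ are both parents of $v_i$, so their balls overlap at $B(v_i)$; distinct same-level balls overlap only when horizontally adjacent, forcing either $v_{i-1}=v_{i+1}$ (backtracking) or a shortening of $\gamma$ via the horizontal edge $v_{i-1}v_{i+1}$, both contradicting geodesicity. For a weak peak $\ell(v_{i-1}) < \ell(v_i)=\ell(v_{i+1}) > \ell(v_{i+2})$, applying Lemma~\ref{lem:ancestor} at $v_i$ produces an ancestor $w$ at level $\ell(v_i)-1$ that is a radial parent of both $v_i$ and $v_{i+1}$; case analysis on the relationship of $w$ to $v_{i-1}$ and $v_{i+2}$ (by the same overlap argument) yields in every sub-case a horizontal or radial shortcut that strictly shortens $\gamma$. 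Hence $\gamma$ is unimodal with the claimed decomposition.

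Stage (B) --- plateau control. For $m\ge 2$ I apply Lemma~\ref{lem:ancestor} to $w_1$ and, when $m\ge 3$, also to $w_3$. When $m=2$, substituting $w_0\to\hat w_1\to w_2$ for $w_0\to w_1\to w_2$ yields a geodesic of the same length whose unique lowest vertex is $\hat w_1$ at level $k^*-1$ (Case (i)). When $m\ge 3$, both $\hat w_1$ and $\hat w_3$ are parents of $w_2$, so they coincide or are horizontal neighbors at level $k^*-1$. If they coincide, the shortcut $w_0\to\hat w_1\to w_3$ has length $2$, strictly shorter than the plateau length $m\ge 3$, contradicting geodesicity. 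If they are distinct neighbors and $m\ge 4$, the rewrite $w_0\to\hat w_1\to\hat w_3\to w_4\to\cdots\to w_m$ has length $m-1$, again a contradiction. The only remaining possibility is $m=3$ with $\hat w_1,\hat w_3$ distinct horizontal neighbors, in which case the rewrite $w_0\to\hat w_1\to\hat w_3\to w_3$ is also of length $3$ and realizes Case (ii), with the surviving horizontal edge $\hat w_1\hat w_3$ at the new lowest level $k^*-1$. Cases $m\in\{0,1\}$ are already in the required form.

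Main obstacle. The principal technical step is the combined use of Lemma~\ref{lem:ancestor} at two nearby plateau vertices in Stage (B), producing either a strict shortening of $\gamma$ (which rules out $m\ge 4$) or a length-preserving rewrite that pushes the plateau down to level $k^*-1$ (for $m\in\{2,3\}$). The weak-peak elimination in Stage (A) is the other subtle point: it requires analyzing the compatibility among several parents at the level just below the peak, again via the overlap property of same-level balls, and enumerating the possible coincidences to produce a shortcut in each case.
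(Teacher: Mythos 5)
Your Stage (B) is sound and matches the standard argument, but Stage (A) has a genuine gap in the inference drawn from it. You show that the level function $\ell$ has no interior local maximum (strict or weak), and from this conclude that the geodesic decomposes as a radial descent, then a horizontal plateau at the single lowest level $k^*$, then a radial ascent. That conclusion does not follow. The absence of interior local maxima only gives \emph{weak} unimodality: $\ell$ is weakly decreasing, then weakly increasing. This still permits ``shoulders'', i.e.\ horizontal edges on the descent or ascent with level pattern $\ell(v_{i-1})=k+1,\ \ell(v_i)=\ell(v_{i+1})=k,\ \ell(v_{i+2})=k-1$ for some $k>k^*$. A shoulder is not a local maximum, so Stage (A) does not touch it, and Stage (B) only analyzes the plateau at the bottom level $k^*$, so it never sees it either. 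Such shoulders can occur in a geodesic: a radial edge $v_{i-1}v_i$ followed by a horizontal edge $v_iv_{i+1}$ need not admit a shortcut $v_{i-1}v_{i+1}$, since $B(v_{i-1})\subset B(v_i)$ may lie in the part of $B(v_i)$ outside $B(v_{i+1})$.

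The missing ingredient is a further reduction that forces all horizontal edges to the bottom. One standard way (and, as far as I can tell, the route of the reference \cite[Lemma~6.2.6]{BS07} that the paper defers to) is to pick, among all geodesics from $v$ to $v'$, one minimizing the potential $\Phi(\gamma)=\sum_i\ell(v_i)$. Then a shoulder $v_{i-1}(k+1),v_i(k),v_{i+1}(k),v_{i+2}(k-1)$ is impossible: apply Lemma~\ref{lem:ancestor} at $v_i$ to get $w$ at level $k-1$ radially adjacent to $v_i$ and $v_{i+1}$; both $w$ and $v_{i+2}$ are parents of $v_{i+1}$, so either $w=v_{i+2}$ (yielding the strict shortcut $v_{i-1}\to v_i\to v_{i+2}$) or $w,v_{i+2}$ are horizontal neighbors, yielding the length-preserving rewrite $v_{i-1}\to v_i\to w\to v_{i+2}$ which replaces $v_{i+1}$ (level $k$) by $w$ (level $k-1$) and strictly decreases $\Phi$, contradicting minimality. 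With shoulders ruled out, a $\Phi$-minimizing geodesic really does split into radial descent + plateau at $k^*$ + radial ascent (in fact the same minimality argument already kills $m\ge2$ in the plateau, simplifying Stage (B)). Note your local-max elimination also implicitly needs an iteration/termination argument (each rewrite shortens $\gamma$ or decreases $\Phi$), which the potential makes explicit. With these additions the proof is complete and matches the cited approach.
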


The proof proceeds exactly as in \cite[Lemma~6.2.6]{BS07}
using Lemma~\ref{lem:ancestor} and that fact that for any harmonic chain
$V_k$, $k\in\Z$
two balls
$v$, $v'\in V_k$
intersect if and only if they are neighboring in
$V_k$.
Thus we omit it.

\subsection{Hyperbolicity of $Z$}
\label{subsect:hypbolicity_z}

The Gromov product of
$v$, $v'$
with respect to
$u$
in a metric space
$Z$
is defined by
$$(v|v')_u=\frac{1}{2}(|vu|+|v'u|-|vv'|).$$
A metric space
$Z$
is said to be
$\de$-{\em hyperbolic}, $\de\ge 0$,
if for any
$v$, $v'$, $v''\in Z$
and a base point
$u\in Z$,
we have
$$(v|v')_u\ge\min\{(v|v'')_u,(v'|v'')_u\}-\de.$$
Now, we come back to our harmonic geodesic approximation
$Z$.

\begin{lem}\label{lem:adjacent_vertices} Assume that
$|vv'|\le 1$
for vertices
$v$, $v'\in Z$
of one and the same level,
$\ell(v)=\ell(v')$.
Then 
$|ww'|\le 1$
for any vertices
$w$, $w'\in Z$
adjacent to
$v$, $v'$
respectively and sitting one level below.
\end{lem}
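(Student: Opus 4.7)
The plan is to reduce the inequality to showing that the closed balls $B(w),B(w')\sub X_\om$ corresponding to $w,w'\in V_{k-1}$ (where $k=\ell(v)$) have non-empty intersection in $X_\om$, and then to invoke the fact that two balls of one and the same harmonic chain whose extents overlap must be equal or consecutive in the chain. Once this is established, $|ww'|\le 1$ in $Z$ is immediate.

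The first reduction is a two-step chain of inclusions. If $v=v'$ there is nothing new to prove, so assume $v\ne v'$; then $v,v'$ are neighbors in the harmonic chain $V_k$, and writing $v=(a,b)$, $v'=(a',b')$, the pair of pairs is harmonic. Applying at level $k$ with $r=\si^k$ the same computation that yields~(\ref{eq:harmonic_below}) gives the overlap $|a'b|_\om\ge\si^k/4>0$, so $B(v)\cap B(v')\ne\es$. Since $w$ is adjacent to $v$ via a radial edge with $\ell(w)=\ell(v)-1$, the definition in sect.~\ref{subsect:definition} forces $B(v)\sub B(w)$, and analogously $B(v')\sub B(w')$. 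Thus $B(w)\cap B(w')\sups B(v)\cap B(v')\ne\es$.

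The main obstacle is to show that two balls $u,u'\in V_{k-1}$ of the same harmonic chain with $B(u)\cap B(u')\ne\es$ are necessarily equal or consecutive in the chain. It suffices to prove that for three consecutive chain balls $u_0=(c_0,d_0)$, $u_1=(c_1,d_1)$, $u_2=(c_2,d_2)\in V_{k-1}$, the extreme balls $B(u_0)$ and $B(u_2)$ are disjoint, i.e.\ $d_0<c_2$ in the cyclic order on $X_\om$. Arguing by contradiction, the assumption $c_2\le d_0$ makes the pairs $(c_0,d_0)$ and $(c_2,d_2)$ separate each other, so Axiom~M($\al$) applies to the 4-tuple $(c_0,d_0,c_2,d_2)$. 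Combined with the two harmonic identities $|c_ic_{i+1}|_\om|d_id_{i+1}|_\om=|c_id_{i+1}|_\om|c_{i+1}d_i|_\om$ for $i=0,1$ and the bounds $|c_ic_{i+1}|_\om,|d_id_{i+1}|_\om\ge\si^{k-1}$, $|c_id_{i+1}|_\om\le 4\si^{k-1}$ already invoked in the derivation of~(\ref{eq:harmonic_below}), these relations should force a quantitative incompatibility, with the sharp lower bound $\al\ge\sqrt2-1$ entering in the same spirit as in Lemma~\ref{lem:overlap}.

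Putting the three pieces together, $w$ and $w'$ lie on the chain $V_{k-1}$, their extents overlap by the first reduction, and by the quantitative separation they must be equal or consecutive in the chain, yielding $|ww'|\le 1$. The first two steps are essentially bookkeeping from the definitions; the genuine difficulty lies in the last step, i.e.\ in extracting the separation of non-consecutive chain balls from the harmonic and strict monotonicity axioms.
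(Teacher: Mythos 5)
Your first two reductions are correct and essentially coincide with the paper's: $B(v)\cap B(v')\neq\es$ because the pair $(v,v')$ is harmonic (so the pairs $(a,b)$, $(a',b')$ separate each other, making the arc $a'b$ a non-trivial common sub-arc — no quantitative bound such as~(\ref{eq:harmonic_below}) is even needed here), and $B(v)\sub B(w)$, $B(v')\sub B(w')$ by the definition of radial edges, whence $B(w)\cap B(w')\neq\es$.

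The third step is where the proposal has a genuine gap. You flag as the "main obstacle" the claim that two balls of one harmonic chain with non-empty intersection must be equal or neighboring, and you propose to extract it from Axiom~M($\al$) with the threshold $\al\ge\sqrt2-1$, but you never carry out the calculation ("should force a quantitative incompatibility"); as written the proof is therefore incomplete. Moreover, this is a detour: the needed disjointness is already a standing fact established in sect.~\ref{subsect:definition} of the paper as part of the conventions on harmonic chains. There the ordering constraints $o_i<a_{i+1}$ and $b_i<o_{i+1}$ are imposed as part of what "harmonic chain" means, and the inequality $b_i<a_{i+2}$ (equivalently, $B(v_i)\cap B(v_{i+2})=\es$, i.e.\ strong causal relation of non-neighboring members) follows by the one-line order chase: if $a_{i+2}\le b_i$ then $a_{i+2}<o_{i+1}$, contradicting $o_{i+1}<a_{i+2}$. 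No monotonicity axiom, no harmonic identity, and no lower bound on $\al$ are used at this point. The paper's proof of the lemma is just your first two reductions plus a citation of this chain property. The lower bound $\al\ge\sqrt2-1$ genuinely matters in Lemma~\ref{lem:overlap}, which you cite as a model, but it plays no role in the lemma at hand.
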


\begin{proof} The balls
$B(w)$, $B(w')$
intersect because
$B(v)\sub B(w)$, $B(v')\sub B(w')$
and the balls
$B(v)$, $B(v')$
intersect.
Since
$w$, $w'$
are members of a harmonic chain, they are adjacent in
$Z$, $|ww'|\le 1$.
\end{proof}
 
From this we immediately obtain.

\begin{cor}\label{cor:radial_geodesics_common_ends} For any two radial geodesics
$\ga$, $\ga'\sub Z$
with common ends, the distance in
$Z$
between vertices of
$\ga$
and 
$\ga'$
of the same level is at most 1.
\end{cor}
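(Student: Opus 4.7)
The plan is a short downward induction on level, driven by Lemma~\ref{lem:adjacent_vertices}. First I would observe that every radial edge changes the level function $\ell$ by exactly $\pm 1$, so $\ell$ is $1$-Lipschitz on $Z$; hence $|vw| \ge |\ell(v)-\ell(w)|$, and since a radial geodesic of length $n$ from $v$ to $w$ is built of $n$ radial edges each shifting $\ell$ by $\pm 1$, this bound must be saturated and the geodesic must move monotonically in level. Taking without loss of generality $\ell(v) \ge \ell(w)$, both $\ga$ and $\ga'$ therefore contain a unique vertex at every level $k$ with $\ell(w) \le k \le \ell(v)$; call them $v_k$ and $v_k'$, so that $v_{\ell(v)} = v_{\ell(v)}' = v$ and $v_{\ell(w)} = v_{\ell(w)}' = w$.

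Next, I would prove $|v_k v_k'| \le 1$ by downward induction on $k$. The base case $k=\ell(v)$ is trivial since $v_k = v_k'$. For the inductive step, suppose $|v_{k+1}\, v_{k+1}'| \le 1$. Then $v_k$ and $v_k'$ are joined to $v_{k+1}$ and $v_{k+1}'$ respectively by radial edges and sit one level below them, so Lemma~\ref{lem:adjacent_vertices} (with the roles of $v, v', w, w'$ there played by $v_{k+1}, v_{k+1}', v_k, v_k'$ here) immediately gives $|v_k v_k'| \le 1$, completing the induction.

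I do not expect any genuine obstacle: the corollary is essentially an iterated application of Lemma~\ref{lem:adjacent_vertices} along matching levels. The only point that requires a moment's reflection is the monotonicity of the level function along a radial geodesic, but this follows at once from the fact that radial edges shift $\ell$ by $\pm 1$ together with the observation that on a geodesic the total level change must equal its length.
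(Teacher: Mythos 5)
Your proof is correct and is exactly the argument the paper has in mind when it says the corollary is "immediately obtained" from Lemma~\ref{lem:adjacent_vertices}: the base case at the common top end is trivial, and the downward induction propagates the bound $\le 1$ one level at a time via that lemma. The only soft spot is the preliminary monotonicity claim: a radial path could in principle be V-shaped (go down to a common ancestor and back up) while still being a geodesic, so "the bound must be saturated" does not follow from $1$-Lipschitzness alone; in fact the paper's usage of "radial geodesic" (see Lemma~\ref{lem:geodesics_in_z} and the cone-point lemmas) implicitly means a level-monotone radial geodesic, which is what makes "the vertex of $\ga$ at level $k$" well defined and your induction run cleanly.
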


It is convenient to use the following terminology. Let
$V'\sub V$
be a subset. A point
$u\in V$
is called a {\em cone point} for 
$V'$
if
$\ell(u)\le\inf_{v\in V'}\ell(v)$
and every 
$v\in V'$
is connected to
$u$
by a radial geodesic. A cone point of maximal level is called 
a {\em branch point} of
$V'$.

\begin{lem}\label{lem:cone_point} For any two points
$v$, $v'\in V$
there is cone point and, hence, a branch point. 
\end{lem}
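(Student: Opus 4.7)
The plan is to reduce to the case where $v$ and $v'$ are horizontally adjacent at a common level, and then apply Lemma~\ref{lem:ancestor} to produce a common radial parent. Iterating Lemma~\ref{lem:ancestor} I descend the higher of $v, v'$ to the level of the other via a radial chain; any cone point for the resulting pair is then automatically a cone point for the original pair. So I may assume $\ell(v) = \ell(v') = k$, and set $m = |vv'|$. The cases $m = 0$ (take $u = v$) and $m = 1$ (apply Lemma~\ref{lem:ancestor} to $v$, whose parent at level $k-1$ is then radially connected to both $v$ and its neighbor $v'$) are immediate.

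For $m \ge 2$, let $u_0 = v, u_1, \dots, u_m = v'$ be a horizontal geodesic in $V_k$. Applying Lemma~\ref{lem:ancestor} to every third vertex $u_{3i}$, I obtain $w_i \in V_{k-1}$ that is a common radial parent of $u_{3i-1}, u_{3i}, u_{3i+1}$. Consecutive parents satisfy $B(w_i) \sups B(u_{3i+1})$ and $B(w_{i+1}) \sups B(u_{3i+2})$, while $B(u_{3i+1}) \cap B(u_{3i+2}) \neq \es$ since these are horizontal neighbors in $V_k$. Hence $B(w_i) \cap B(w_{i+1}) \neq \es$, and since both $w_i, w_{i+1}$ lie in the harmonic chain $V_{k-1}$, the structural fact underlying Lemma~\ref{lem:adjacent_vertices} forces $|w_i w_{i+1}| \le 1$. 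Thus the outer parents $w_0$ and $w_{\text{last}}$ are radial parents of $v$ and $v'$ respectively, sitting at horizontal distance at most $\lceil m/3 \rceil$ in $V_{k-1}$.

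Iterating this descent, the horizontal distance between chosen ancestors of $v$ and $v'$ contracts by a factor of at least $3$ per level, so after $O(\log m)$ levels it drops to $\le 1$. A final invocation of Lemma~\ref{lem:ancestor} then supplies a common radial parent $u$. Concatenating the radial chains from $v$ and $v'$ down to their respective ancestors and thence to $u$ exhibits $u$ as a cone point for $\{v, v'\}$. Since levels of cone points form a nonempty subset of $\Z$ bounded above by $\min\{\ell(v), \ell(v')\}$, the supremum is attained and a branch point exists. The main obstacle is the inequality $|w_i w_{i+1}| \le 1$, which rests on the fact that two balls of the same harmonic chain intersect only when they are chain-neighbors; I would cite this directly from the argument in Lemma~\ref{lem:adjacent_vertices}.
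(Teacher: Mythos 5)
Your proof is correct but takes a genuinely different route from the paper's. The paper first reduces to Lemma~\ref{lem:geodesics_in_z}: any two vertices are joined by a geodesic of the form radial--(at most one horizontal edge)--radial. If there is no horizontal edge, the lowest vertex of the geodesic is already a branch point; if there is a horizontal edge $uu'$ on the lowest level, a single application of Lemma~\ref{lem:ancestor} to $u$ produces a common parent of $u$ and $u'$, which is then a cone point. Your argument avoids Lemma~\ref{lem:geodesics_in_z} entirely and instead runs an explicit contraction: you first bring both vertices to a common level by iterated use of Lemma~\ref{lem:ancestor}, then apply Lemma~\ref{lem:ancestor} to every third vertex along the horizontal chain, using the same ``balls in a harmonic chain intersect iff they are neighbors'' fact that underlies Lemma~\ref{lem:adjacent_vertices} to see that the chosen parents again form a horizontal path of roughly one third the length, and iterate until the horizontal distance collapses to $\le 1$. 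This is more elementary and self-contained---it does not need the structure theorem for geodesics in $Z$---but it is longer and requires a bit of bookkeeping (e.g.\ when the horizontal length is $\equiv 2\pmod 3$ the last parent $w_{\mathrm{last}}$ does not automatically cover $v'$, so you must also apply Lemma~\ref{lem:ancestor} at $u_m$). The paper's route is shorter because it reuses the already-established Lemma~\ref{lem:geodesics_in_z}; yours buys independence from that lemma at the cost of the iterative descent. Both establish the existence of a cone point, and the step from cone point to branch point (levels bounded above, so the supremum is attained) is the same.
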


\begin{proof} By Lemma~\ref{lem:geodesics_in_z}, 
$v$, $v'$
can be connected in
$Z$
by a geodesic 
$\ga$
which contains at most one horizontal edge. If there is no horizontal edge in
$\ga$,
then the lowest level point
$w$
of
$\ga$
is a branch point of
$v$, $v'$.
Otherwise, let
$uu'\sub\ga$
be the horizontal edge. It lies on the lowest level of
$\ga$.
Without loss of generality, we assume that
$vu$, $v'u'$
are radial geodesics.
By Lemma~\ref{lem:ancestor}, there is
$w\in V$
with
$\ell(w)=\ell(\ga)-1$
which is connected to 
$u$, $u'$
by radial edges. Taking concatenation
$vuw$, $v'u'w$
we see that 
$w$
connected to
$v$, $v'$
by radial geodesics. Hence,
$w$
is a cone point of
$v$, $v'$.
\end{proof}

Note that if
$u$
is a cone point of
$v$, $v'$
and
$w$
is their branch point, then
$(v|v')_u=|uw|$
in the case the geodesic
$vv'$
has no horizontal edge, and
$(v|v')_u=|uw|+1/2$
otherwise. In particular,
$|uw|\ge (v|v')_u-1/2$
is either case.

\begin{lem}\label{lem:cone_point_estimate} Let
$u\in V$
be a cone point of
$v$, $v'\in V$,
$\ga=uv$, $\ga'=uv'$
radial geodesics. Then for any
$y\in\ga$, $y'\in\ga'$
sitting one the same level
$\ell(y)=\ell(y')\le\ell(w)$,
where
$w$
is a branch point of
$v$, $v'$,
we have
$|yy'|\le 2$.
\end{lem}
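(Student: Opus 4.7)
The plan is to argue by descending induction on the level, starting at $\ell(w)$.

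First I would establish $|y_{\ell(w)}\,w|\le 1$, and symmetrically $|y'_{\ell(w)}\,w|\le 1$. Since $w$ is in particular a cone point of $v$, there is a radial geodesic $\de_v$ from $v$ to $w$; write $\de_k$ for its vertex at level $k$, $\ell(w)\le k\le\ell(v)$, and $y_k$ for the vertex of $\ga$ at level $k$. At the top level $\ell(v)$ we have $y_{\ell(v)}=v=\de_{\ell(v)}$. Assuming inductively that $|y_k\,\de_k|\le 1$, the vertices $y_{k-1},\de_{k-1}$ are radial predecessors of $y_k,\de_k$ respectively along their geodesics, so Lemma~\ref{lem:adjacent_vertices} gives $|y_{k-1}\,\de_{k-1}|\le 1$. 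At $k=\ell(w)$ this yields $|y_{\ell(w)}\,w|\le 1$, and the same argument applied to $\ga'$ together with a radial geodesic $\de_{v'}$ from $v'$ to $w$ gives $|y'_{\ell(w)}\,w|\le 1$. Hence $|y_{\ell(w)}\,y'_{\ell(w)}|\le 2$.

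Next I would descend from level $\ell(w)$ to any lower level by a second induction on $\ell(w)-k$. Suppose $|y_k\,y'_k|\le 2$ and pick a vertex $z$ realising this distance, i.e.\ $|y_k\,z|\le 1$ and $|z\,y'_k|\le 1$. Three cases arise according to $\ell(z)\in\{k-1,k,k+1\}$. If $\ell(z)=k$, apply Lemma~\ref{lem:adjacent_vertices} to each of the pairs $(y_k,z)$ and $(z,y'_k)$: any radial predecessor $z_{k-1}$ of $z$ satisfies $|y_{k-1}\,z_{k-1}|\le 1$ and $|z_{k-1}\,y'_{k-1}|\le 1$, giving $|y_{k-1}\,y'_{k-1}|\le 2$ by the triangle inequality. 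If $\ell(z)=k-1$, then $z$ is a common radial predecessor of $y_k$ and $y'_k$; applying Lemma~\ref{lem:adjacent_vertices} with $v=v'=y_k$ (resp.\ $v=v'=y'_k$) and the radial predecessors $y_{k-1},z$ (resp.\ $z,y'_{k-1}$) produces $|y_{k-1}\,z|\le 1$ and $|z\,y'_{k-1}|\le 1$, and we conclude as before. The case $\ell(z)=k+1$ cannot occur when the distance equals $2$, since $y_k,y'_k$ would then be radial predecessors of the common vertex $z$, forcing $|y_k\,y'_k|\le 1$ by Lemma~\ref{lem:adjacent_vertices} applied with $v=v'=z$.

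The only subtle point is that Lemma~\ref{lem:adjacent_vertices} is stated for \emph{any} radial predecessors of same-level neighbours, not only the ones that happen to lie on $\ga$ or $\ga'$; this is exactly what makes each triangle-inequality step in both inductions legitimate. Iterating the second induction down from level $\ell(w)$ to level $\ell(u)$ yields $|y\,y'|\le 2$ for all same-level vertices $y\in\ga$, $y'\in\ga'$ with common level at most $\ell(w)$, which is the desired bound.
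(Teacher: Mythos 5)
Your proof is correct, but it takes a genuinely different route from the paper. The paper observes that the concatenations $vwu$ and $v'wu$ are radial geodesics sharing the segment $wu$; applying Corollary~\ref{cor:radial_geodesics_common_ends} to the pair $\ga$, $vwu$ (and to $\ga'$, $v'wu$) yields a vertex $y''\in wu$ on the common level with $|yy''|\le 1$ and $|y'y''|\le 1$, hence $|yy'|\le 2$ in a single step. You instead (a) run a descending induction from level $\ell(v)$ via Lemma~\ref{lem:adjacent_vertices} to get $|y_{\ell(w)}w|\le 1$ and $|y'_{\ell(w)}w|\le 1$, and then (b) propagate the bound $|y_ky'_k|\le 2$ downward with a second induction that case-splits on the level of the middle vertex of a realising path. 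Your version is longer, but it has a real advantage: the paper's argument implicitly needs a radial geodesic from $w$ down to $u$ (so that $vwu$ is a genuine radial geodesic), which is clear when $u$ is a cone point for the branch points (as in Lemma~\ref{lem:de_inequality_cone_point}) but is not literally part of the hypotheses of Lemma~\ref{lem:cone_point_estimate}, which only asks that $u$ be a cone point of $\{v,v'\}$. Your two-induction argument never uses a radial geodesic from $u$ to $w$, so it works directly under the stated hypotheses and is in that sense more elementary and slightly more general; what the paper's route buys is brevity and reuse of Corollary~\ref{cor:radial_geodesics_common_ends}.
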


\begin{proof} Concatenations
$vwu$, $v'wu$
are radial geodesics in
$Z$.
By Corollary~\ref{cor:radial_geodesics_common_ends}, we have
$|yy''|\le 1$
for 
$y\in\ga$, $y''\in vwu$
sitting on the same level,
$\ell(y)=\ell(y'')$,
and similarly
$|y'y''|\le 1$
for 
$y'\in\ga'$, $y''\in v'wu$
with
$\ell(y')=\ell(y'')$.
For
$\ell(y)=\ell(y')\le\ell(w)$
we can choose
$y''\in wu$
with
$\ell(y'')=\ell(y)=\ell(y')$,
and thus 
$|yy'|\le|yy''|+|y''y'|\le 2$.
\end{proof}

We need the following Proposition from \cite[Proposition~6.2.9]{BS07},
for which we give a different proof. 

\begin{lem}\label{lem:de_inequality_cone_point} Let
$v$, $v'$, $v''\in V$
and let
$w$, $w'$, $w''$
be branch points for the pairs of vertices
$\{v',v''\}$, $\{v,v''\}$
and
$\{v,v'\}$
respectively. Let
$u$
be a cone point of
$\{w,w',w''\}$.
Then
$$(v|v')_u\ge\min\{(v|v'')_u,(v'|v'')_u\}-\de$$
with
$\de=5/2$. 
\end{lem}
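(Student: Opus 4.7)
The plan is to convert the claimed inequality into a statement comparing the levels of the three branch points, and then to produce a common cone point just below the minimum of these levels.

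First I would compute $(v|v')_u$ in terms of $\ell(w'')$ and $\ell(u)$. By Lemma~\ref{lem:geodesics_in_z}, a geodesic $[v,v']$ is a union of two radial subsegments joined possibly by a single horizontal edge at the lowest level; routing through $w''$ gives a radial path of length $\ell(v)+\ell(v')-2\ell(w'')$, and any horizontal-edge shortcut at some level $\ell_g$ would produce, via Lemma~\ref{lem:ancestor}, a cone point of $\{v,v'\}$ at level $\ell_g-1$, so maximality of $w''$ forces $\ell_g\le\ell(w'')+1$. Combining these,
\begin{equation*}
|vv'|=\ell(v)+\ell(v')-2\ell(w'')-\varepsilon''
\end{equation*}
with $\varepsilon''\in\{0,1\}$, and since $u$ is a cone point of $w''$ (hence of $\{v,v'\}$) we have $|uv|=\ell(v)-\ell(u)$, $|uv'|=\ell(v')-\ell(u)$, so
\begin{equation*}
(v|v')_u=\ell(w'')-\ell(u)+\varepsilon''/2,
\end{equation*}
with analogous identities for $(v|v'')_u$ and $(v'|v'')_u$ involving $\ell(w')$ and $\ell(w)$. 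The lemma therefore reduces to showing $\ell(w'')\ge\min\{\ell(w),\ell(w')\}-2$, since then $(v|v')_u\ge|uw''|\ge\min\{|uw|,|uw'|\}-2\ge\min\{(v|v'')_u,(v'|v'')_u\}-5/2$.

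Without loss of generality assume $\ell(w)\le\ell(w')$ and set $k=\ell(w)$. The strategy is to construct a cone point of $\{v,v'\}$ at level $k-1$. Let $a$ be the unique level-$k$ vertex on the radial geodesic $u\to w'\to v$; it exists because $\ell(u)\le k\le\ell(w')\le\ell(v)$ and radial geodesics are monotone in level. The same vertex $a$ lies on the radial geodesic $u\to w'\to v''$ since these two share the initial segment $[u,w']$, while $w$ itself is the level-$k$ vertex of the radial geodesic $u\to w\to v''$. Thus $a$ and $w$ are two level-$k$ vertices on two radial geodesics from $u$ to the common endpoint $v''$, so Corollary~\ref{cor:radial_geodesics_common_ends} gives $|aw|\le 1$. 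Applying Lemma~\ref{lem:ancestor} to $a$ produces a vertex $q$ at level $k-1$ with radial edges to both $a$ and $w$. Then $q\to a\to w'\to v$, $q\to w\to v'$ and $q\to w\to v''$ are level-monotone concatenations of radial geodesics, hence themselves radial geodesics, so $q$ is a cone point of the full triple $\{v,v',v''\}$, and in particular of $\{v,v'\}$. Maximality of $w''$ then yields $\ell(w'')\ge\ell(q)=k-1$, even stronger than required.

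The main technical obstacle is the careful bookkeeping required for Corollary~\ref{cor:radial_geodesics_common_ends} to apply: one must identify a common endpoint (here $v''$) so that both $a$ and $w$ lie on radial geodesics from $u$ to that endpoint, and then verify that the ancestor $q$ furnished by Lemma~\ref{lem:ancestor} inherits simultaneous radial geodesics to all of $v$, $v'$ and $v''$, not merely to $a$ and $w$. Degenerate cases in which some of the levels $\ell(u),\ell(w),\ell(w')$ coincide (for instance $\ell(u)=k$, which forces $u=w=a$) are handled by the same construction with no essential modification.
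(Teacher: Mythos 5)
Your proposal is correct, but it follows a genuinely different and more combinatorial route than the paper. The paper's proof uses Lemma~\ref{lem:cone_point_estimate} to bound the distance between level-matched vertices $y,y',y''$ on the three radial geodesics from $u$ by $|yy''|,|y'y''|\le 2$, then invokes the triangle inequality to get $|yy'|\le 4$ and monotonicity of the Gromov product to obtain $(v|v')_u\ge t_0-2$ with $t_0=\min\{|uw|,|uw'|\}$. You instead derive the exact identity $(v|v')_u=\ell(w'')-\ell(u)+\varepsilon''/2$ with $\varepsilon''\in\{0,1\}$ (and its analogues for the other two pairs), reducing the lemma to the purely combinatorial inequality $\ell(w'')\ge\min\{\ell(w),\ell(w')\}-2$. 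You then prove the stronger bound $\ell(w'')\ge\min\{\ell(w),\ell(w')\}-1$ by exhibiting an explicit cone point $q$ of $\{v,v',v''\}$ one level below $k=\min\{\ell(w),\ell(w')\}$: you locate the level-$k$ vertex $a$ on the radial geodesic $u\to w'\to v$, compare it with $w$ via Corollary~\ref{cor:radial_geodesics_common_ends} applied to the two radial geodesics from $u$ to the shared endpoint $v''$, and apply Lemma~\ref{lem:ancestor}. This bypasses Lemma~\ref{lem:cone_point_estimate} entirely and actually yields the sharper constant $\de=3/2$ in place of the paper's $\de=5/2$: from $\ell(w'')\ge k-1$ one gets $(v|v')_u\ge|uw''|\ge\min\{|uw|,|uw'|\}-1\ge\min\{(v|v'')_u,(v'|v'')_u\}-3/2$. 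The paper's argument is shorter and recycles already-established machinery; yours is longer but more elementary in spirit and improves the hyperbolicity constant. Both are correct.
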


\begin{proof} We put
$t_0=\min\{|uw|,|uw'|\}$
and let
$\ga$, $\ga'$, $\ga''$
be radial geodesics between
$u$
and
$v$, $v'$, $v''$
respectively. Assume that
$y\in\ga$, $y'\in\ga'$, $y''\in\ga''$
satisfy
$|uy|=|uy'|=|uy''|=t_0$.
By Lemma~\ref{lem:cone_point_estimate} we have
$|yy''|$, $|y'y''|\le 2$.
Thus by the triangle inequality
$|yy'|\le 4$.
By monotonicity of the Gromov product
$$(v|v')_u\ge (y|y')_u=t_0-\frac{1}{2}|yy'|\ge t_0-2.$$ 
By the remark above
$t_0\ge\min\{(v|v'')_u,(v'|v'')_u\}-1/2$.
Hence, the claim.
\end{proof}

Using argument of \cite[Proposition~6.2.10]{BS07}, we obtain.

\begin{pro}\label{pro:hyperbolic_harmonic_approximation} Any hyperbolic harmonic
approximation
$Z$
of
$X_\om$
is a geodesic
$\de$-hyperbolic
space with
$\de=5$. 
\qed
\end{pro}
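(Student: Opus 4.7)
The plan is to adapt the argument of \cite[Proposition~6.2.10]{BS07} to our setting, with Lemma~\ref{lem:de_inequality_cone_point} playing the role of the analogous input there. Geodesicness of $Z$ is already observed in sect.~\ref{subsect:definition}: the graph is connected, every edge has length $1$, and distances between vertices are integer-valued, so combinatorial geodesics always exist. Thus only the $\de$-hyperbolicity inequality needs to be established.

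It suffices to verify, for all $v,v',v'',u_0\in V$, that
$$(v|v')_{u_0}\ge\min\{(v|v'')_{u_0},(v'|v'')_{u_0}\}-5.$$
The key ingredient is Lemma~\ref{lem:de_inequality_cone_point}, which gives the analogous inequality with constant $5/2$, but only at a cone point $u$ of the three branch points $w,w',w''$ associated with the triple $\{v,v',v''\}$. What remains is to transfer that inequality from the triple-specific $u$ to an arbitrary base point $u_0$, at the cost of at most doubling the constant.

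First I would extend Lemma~\ref{lem:cone_point} by iteration to arbitrary finite collections: for any finite $F\sub V$ there exists a common cone point of $F$ of arbitrarily low level, obtained by repeatedly applying Lemma~\ref{lem:cone_point} (together with Lemma~\ref{lem:ancestor} to descend one level when needed) to branch points of pairs from $F$ combined with the remaining members of $F$. Applying this to the seven-point set $\{u_0,v,v',v'',w,w',w''\}$ produces a single vertex $u\in V$ which is simultaneously a cone point of all four of $u_0,v,v',v''$ and of the three branch points. Since any cone point of the branch points satisfies the hypothesis of Lemma~\ref{lem:de_inequality_cone_point}, the inequality holds at this $u$ with constant $5/2$.

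Finally I would transfer the inequality from $u$ to $u_0$. By Lemma~\ref{lem:cone_point_estimate} and Corollary~\ref{cor:radial_geodesics_common_ends}, the radial geodesics from $u$ to each of $u_0,v,v',v''$ stay pairwise within distance $2$ on every level below the corresponding branch points, so the increment $(x|y)_{u_0}-(x|y)_u$ is the same across all three pairs $(x,y)\in\{(v,v'),(v,v''),(v',v'')\}$ up to an additive error bounded by $5/2$. Substituting this into the inequality at $u$ yields the same inequality at $u_0$ with constant $5/2+5/2=5$. The main technical obstacle is confirming this uniform-difference statement for the Gromov-product shifts: this is precisely the quasi-tree property encoded by cone points, and is the manifestation in our setting of the standard doubling principle for Gromov hyperbolicity (three-point condition at one base point implies it at every base point at the cost of doubling $\de$), adapted to the triple-specific base point provided by Lemma~\ref{lem:de_inequality_cone_point}.
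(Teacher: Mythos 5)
Your plan---push a cone point below everything, apply Lemma~\ref{lem:de_inequality_cone_point} there, and then transfer to the original base point at the cost of a factor of two---is the right shape, and the constant $5=2\cdot(5/2)$ confirms a doubling step (the paper itself simply defers to \cite[Proposition~6.2.10]{BS07}). However, the transfer mechanism you propose is wrong, and you flag it yourself as the remaining obstacle.

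The uniform-difference claim is false. The three quantities $(v|v')_{u_0}-(v|v')_u$, $(v|v'')_{u_0}-(v|v'')_u$, $(v'|v'')_{u_0}-(v'|v'')_u$ need not lie within a bounded band of one another. Already in a tree, take $u$ the root, put $v$ and $u_0$ on the same radial ray with $u_0$ between $u$ and $v$ at distance $|uu_0|=N$ from $u$, and put $v',v''$ on another branch with their branch point at distance $1$ from $u$: then the first two increments are $0$ while the third equals $N$. (The triangle inequality only bounds each increment by $|uu_0|$, which is unbounded.) So one cannot simply ``shift all three Gromov products by nearly the same amount.'' The standard doubling principle says no such thing; it is proved by manipulating four-point inequalities, and for it to apply one needs the $5/2$-inequality at the fixed base point $u$ for \emph{every} triple drawn from the four-point set $\{u_0,v,v',v''\}$, not just for $(v,v',v'')$.

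That in turn forces a change in your first step. To have Lemma~\ref{lem:de_inequality_cone_point} available at base $u$ for all four triples $\{v,v',v''\}$, $\{u_0,v,v'\}$, $\{u_0,v,v''\}$, $\{u_0,v',v''\}$, the point $u$ must be a cone point of all six branch points of pairs from $\{u_0,v,v',v''\}$, not merely of the three branch points $w,w',w''$ of $\{v,v',v''\}$ together with $u_0$ itself, which is all your seven-point set supplies. Such a $u$ does exist by iterating Lemma~\ref{lem:cone_point} exactly as you suggest, but the set one iterates over must include the three additional branch points involving $u_0$. Once this is arranged, the standard ``hyperbolicity inequality at one base point implies it at every base point with doubled constant'' lemma, applied to the four-point set $\{u_0,v,v',v''\}$ with base $u$ and constant $5/2$, yields the inequality at $u_0$ with constant $5$.
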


\section{$X_\om$ and $Z$ are quasi-isometric}
\label{sect:Xquasi-isometricZ}

Our aim is to show that for every
$\om\in X$
the space
$X_\om$
and its hyperbolic harmonic approximation
$Z=Z(\si)$
are quasi-isometric. Let
$V$
be the vertex set of
$Z$.
By definition, we have an inclusion
$f:V\hookrightarrow X_\om$.
We show that
$f$
is a quasi-isometry with respect to the metric on
$Z$
and the 
$\de$-metric 
on
$X_\om$.

\subsection{Estimates from above}
\label{subsect:estimates_above}

In this section we establish estimates from above, that is, we show that
there is a constant 
$D=D(\si)$
depending only on
$\si$ 
such that for every edge 
$vv'$
of
$Z$
we have 
$\de(v,v')\le D$.
For horizontal edges this is proven in Lemma~\ref{lem:equal_radius_harm},
and for vertical edges in Lemma~\ref{lem:different_levels_radius_unfixed}.

Fix 
$\om\in X$, $r>0$.
Then the sphere 
$S_r(o)\sub X_\om$
of radius
$r$
centered at
$o\in X_\om$
determines the harmonic pair
$((a,b),(o,\om))\in\harm$,
where
$S_r(o)=(a,b)$.

\begin{lem}\label{lem:equal_radius_harm} Fix 
$\om\in X$, $r>0$,
and consider two spheres 
$S_r(o)=(a,b)$, $S_r(o')=(a',b')$
in
$X_\om$
such that the pair of pairs
$((a,b),(a',b'))$
is harmonic. Then the 
$\de$-distance
between harmonic
$q=((a,b),(o,\om))$
and
$q'=((a',b'),(o',\om))$
is at most 
$2\ln 4$,
$\de(q,q')\le 2\ln 4$. 
\end{lem}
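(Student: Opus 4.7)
\emph{Setup and zz-path.} The plan is to build a two-sided zz-path $q \to p \to q'$ in $\hm$ through the intermediate vertex $p := ((a,b),(a',b'))$, which is harmonic by hypothesis. Its first side lies on the line $\h_{(a,b)}$ and its second on $\h_{(a',b')}$. Applying the distance formula~\eqref{eq:distance} in the $|\cdot|_\om$-metric, together with the midpoint identities $|ao|_\om = |bo|_\om = |a'o'|_\om = |b'o'|_\om = r$, the two side lengths collapse to
$$|q\,p|=\bigl|\ln(|aa'|_\om/|a'b|_\om)\bigr|,\qquad |p\,q'|=\bigl|\ln(|ab'|_\om/|aa'|_\om)\bigr|.$$

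\emph{Sign analysis.} I would next fix the cyclic order $a<a'<b<b'$ on $X_\om$, which is allowed by Corollary~\ref{cor:harm_separate}, and pin down the signs of the two logarithms. Axioms~M$(\al)$ and P applied to the harmonic 4-tuple $(a,b,o,\om)$ give $(1+\al)r\le|ab|_\om\le 2r$, and analogously for $|a'b'|_\om$. Suppose towards contradiction that $a'\in(a,o)$; strict monotonicity (Corollary~\ref{cor:interval_monotone}) then yields $|aa'|_\om<r<|a'b|_\om$, which forces $b\in(o',b')$ and hence $|bb'|_\om<r$. The harmonicity $|aa'|_\om|bb'|_\om=|ab'|_\om|a'b|_\om$ would then give $|ab'|_\om<r$, contradicting $|ab'|_\om>|ab|_\om>r$. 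So $a'\in(o,b)$ and, symmetrically, $b\in(a',o')$, which gives
$$|aa'|_\om,\,|bb'|_\om\in(r,2r),\qquad |a'b|_\om<r<|ab'|_\om.$$

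\emph{Final estimate.} Both ratios inside the logarithms above exceed $1$, so the two side lengths add to $\ln(|ab'|_\om/|a'b|_\om)$. The triangle inequality in $|\cdot|_\om$ yields $|ab'|_\om\le|ab|_\om+|bb'|_\om\le 4r$, while harmonicity rearranged reads $|a'b|_\om=|aa'|_\om|bb'|_\om/|ab'|_\om\ge r\cdot r/(4r)=r/4$. Hence $|ab'|_\om/|a'b|_\om\le 16$ and $\de(q,q')\le\ln 16 = 2\ln 4$, as claimed.

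\emph{Main obstacle.} The hardest step is the sign analysis: I expect to need axiom~M$(\al)$ (to secure $|ab|_\om>r$) combined with strict monotonicity and the harmonicity of $p$ to exclude the alternative configuration $a'\in(a,o)$. Without this exclusion the two side lengths need not combine into a single logarithm, and the telescoping leading to the clean factor-of-$16$ bound would break.
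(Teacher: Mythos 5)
Your proof is correct and uses essentially the same approach as the paper: the same two-sided zz-path $q\to\wh q\to q'$ through the intermediate harmonic vertex $\wh q=((a,b),(a',b'))$, arriving at the identical bound $2\ln 4$. The only difference is in the bookkeeping: you telescope the two side lengths into the single logarithm $\ln(|ab'|_\om/|a'b|_\om)$ and then bound $|a'b|_\om\ge r/4$ via harmonicity, whereas the paper bounds each side by $\ln 4$ separately using $|bb'|_\om\ge|o'b'|_\om=r$; you also re-derive the interleaving $o<a'<b<o'$ directly from axioms M$(\al)$ and P, while the paper simply invokes this fact from the discussion of harmonic chains in sect.~\ref{subsect:definition}.
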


\begin{proof} We fix an orientation of
$X_\om$
and assume without loss of generality that the ordered pairs
$(a,b)$, $(a',b')$
agree with the orientation, and 
$a$
precedes
$b'$.
Note that
$b$
is not on the segment
$o'b'\sub X_\om$, $b\not\in o'b'$,
see sect.~\ref{subsect:definition}.
  
The harmonic pairs
$q=((a,b),(o,\om))$
and
$\wh q=((a,b),(a',b'))$
have the common axis
$(a,b)$.
Thus the distance
$l$
between
$q$, $\wh q$
along
$h_{(a,b)}$
is computed as
$$e^l=\frac{|aa'||ob|}{|ao||a'b|}=\frac{|aa'|_\om}{|a'b|_\om}$$
because
$|ao|_\om=r=|ob|_\om$.
Since
$\wh q$
is harmonic, we have
$|aa'||bb'|=|a'b||ab'|$.
Thus
$e^l=\frac{|ab'|_\om}{|bb'|_\om}$.
By the triange inequality and monotonicity,
$|ab'|_\om\le|ab|_\om+|bb'|_\om\le|ab|_\om+|a'b'|_\om\le 4r$.
By the remark above,
$|bb'|_\om\ge|o'b'|_\om=r$.
Therefore,
$l\le\ln 4$.
Similarly, 
$\wh q$
and
$q'$
have the common axis
$(a',b')$,
and 
$l'=|q'\wh q|\le\ln 4$.
Hence,
$\de(q,q')\le |q\wh q|+|q'\wh q|\le 2\ln 4$. 
\end{proof}

\begin{cor}\label{cor:horizont_edge_above} For every horizontal edge
$vv'\sub Z$
we have
$\de(v,v')\le C$
with
$C\le 2\ln 4$.
\end{cor}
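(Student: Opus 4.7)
The plan is to observe that this corollary is essentially an immediate translation of Lemma~\ref{lem:equal_radius_harm} into the language of edges in the graph $Z$, so almost no new work is needed. The only task is to verify that the hypotheses of the lemma are satisfied by the endpoints of a horizontal edge.

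First I would unfold the definition of a horizontal edge from sect.~\ref{subsect:definition}: if $vv' \subset Z$ is horizontal, then $v, v' \in V_k$ for some common level $k \in \Z$, and $v, v'$ are neighboring spheres in the harmonic chain $V_k$ of radius $r = \sigma^k$. By the definition of a harmonic chain, writing $v = S_r(o) = (a,b)$ and $v' = S_r(o') = (a',b')$, the pair of pairs $((a,b),(a',b'))$ is harmonic. Thus the identifications $v \leftrightarrow q = ((a,b),(o,\om))$ and $v' \leftrightarrow q' = ((a',b'),(o',\om))$ in $\harm_\om$ are exactly the harmonic pairs considered in Lemma~\ref{lem:equal_radius_harm}.

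Next I would apply Lemma~\ref{lem:equal_radius_harm} directly to conclude $\de(q,q') \le 2\ln 4$. Since by the identification of $\harm_\om$ with metric balls in $X_\om$ the vertices $v, v'$ of $Z$ correspond to $q, q'$ respectively, and since the metric on $Z$ referred to in the target estimate is really the $\de$-metric pulled back to $V \subset \harm_\om$ (which is how the statement is phrased: comparing distances along horizontal edges in $Z$ with distances in the filling), this gives $\de(v,v') \le 2\ln 4$, and we may take $C = 2\ln 4$.

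There is no real obstacle here: the only thing to be a bit careful about is consistency of notation, namely that the ordering conventions implicit in the definition of a harmonic chain (from sect.~\ref{subsect:definition}) are compatible with the orientation convention chosen at the start of the proof of Lemma~\ref{lem:equal_radius_harm}, but this is immediate because the lemma's proof is explicitly stated to work "without loss of generality" after fixing an orientation of $X_\om$. Hence the proof of the corollary reduces to a single line citing the lemma.
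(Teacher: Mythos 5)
Your proposal is correct and is essentially identical to the paper's one-line proof: the paper likewise just observes that the endpoints of a horizontal edge, being neighboring spheres of a common radius in a harmonic chain, satisfy the hypotheses of Lemma~\ref{lem:equal_radius_harm}, and then applies that lemma.
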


\begin{proof} Indeed, the vertices
$v$, $v'$
of any horizontal edge in
$Z$
satisfy the condition of Lemma~\ref{lem:equal_radius_harm}.
\end{proof}

\begin{lem}\label{lem:containing_spheres} Fix 
$\om\in X$, $0<\si\le 1/24$, 
and consider two spheres
$S_r(o)=(a,b)$, $S_{r'}(o')=(a',b')$
in
$X_\om$,
where
$r=\si^k$, $r'=\si^{k+1}$
for some
$k\in\Z$,
such that
$o$
lies in the open interval
$(a'b')\sub X_\om$, $o\in (a'b')$.
Then the spheres
$(a,b)$, $(a',b')$
do not separate each other in
$X$.
Let
$h\sub\harm$
be the unique line that contains
$(a,b)$
and
$(a',b')$.
Then the distance
$l$
between
$(a,b)$ 
and 
$(a',b')$
along
$h$
is estimated above as
$l\le\sqrt{2/\si}$.
independent of
$k$.
\end{lem}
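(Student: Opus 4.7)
The plan is to identify the configuration as a strip based on $(a,b)$ and $(a',b')$ and then apply the width estimate of Lemma~\ref{lem:length_preestimate}.

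First I would establish that $(a,b)$ and $(a',b')$ are in strong causal relation. Since $r'=\si r\le r/24$ and $o\in(a'b')$, Corollary~\ref{cor:interval_monotone} together with the triangle inequality gives $|oa'|_\om,|ob'|_\om\le|a'b'|_\om\le 2r'<r=|ao|_\om=|ob|_\om$. If either $a$ or $b$ were to lie inside $(a'b')$, then on one of the sub-intervals $(a',o)$ or $(o,b')$ monotonicity would force the distance from $o$ to that point to be strictly less than $2r'<r$, a contradiction. Hence both $a$ and $b$ lie on the complementary arc (the one containing $\om$), so $(a,b)$ and $(a',b')$ do not separate each other in $X$. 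Moreover, the induced order on $X_\om$ is $a<a'<o<b'<b$, so $(a',b')\sub(a,b)\sub X_\om$, giving strong causal relation. By \cite[Lemma~4.2]{Bu18} there is then a uniquely determined line $h=h_c\sub\harm$ through both, with axis $c$ their common perpendicular.

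The 4-tuple $p=((a,b),(a',b'),c)$ is a strip in the sense of sect.~\ref{subsect:length_segments} (the pairs $(a,b')$ and $(a',b)$ separate each other by the cyclic order $\om,a,a',b',b$), and $\width(p)$ equals the distance $l$ along $h_c$. Lemma~\ref{lem:length_preestimate} gives
$$l\le 2\sqrt{\frac{|aa'|_\om\,|bb'|_\om}{|ab|_\om\,|a'b'|_\om}},$$
and Corollary~\ref{cor:interval_monotone} applied to the nested intervals $aa'\sub ao$ and $b'b\sub ob$ in $X_\om$ yields $|aa'|_\om<r$ and $|bb'|_\om<r$, so the numerator is strictly less than $r^2$.

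The principal step is the lower bound on $|ab|_\om$ provided by Axiom~M($\al$). The 4-tuple $(a,o,b,\om)$ has separating pairs $(a,b)$ and $(o,\om)$, so in any semi-metric from $M$
$$|ab|\cdot|o\om|\ge(1+\al)\,|ao|\cdot|b\om|,$$
where the two maxima of M($\al$) agree via the harmonicity $|ao|\cdot|b\om|=|a\om|\cdot|bo|$. Dividing by $|a\om|\cdot|b\om|$ and using the metric-inversion identity $|xy|_\om=|xy|/(|x\om|\,|y\om|)$, the three factors $|a\om|,|b\om|,|o\om|$ cancel to give
$$|ab|_\om\ge(1+\al)\,|ao|_\om=(1+\al)\,r\ge\sqrt{2}\,r,$$
using $\al\ge\sqrt{2}-1$. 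The same argument applied to $((a',b'),(o',\om))$ gives $|a'b'|_\om\ge\sqrt{2}\,r'$. Consequently $|ab|_\om\,|a'b'|_\om\ge 2rr'=2\si r^2$, and
$$l<2\sqrt{\frac{r^2}{2\si r^2}}=\sqrt{\frac{2}{\si}},$$
independent of $k$. The main obstacle is precisely this $(1+\al)$-improvement of the triangle inequality: that is where the quantitative hypothesis $\al\ge\sqrt{2}-1$ enters to meet the required constant $\sqrt{2/\si}$.
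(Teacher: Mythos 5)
Your proof is correct and follows essentially the same route as the paper: establish the nesting $(a',b')\sub(a,b)$ by a triangle-inequality comparison, recognize the strip, apply Lemma~\ref{lem:length_preestimate}, and bound $|ab|_\om$, $|a'b'|_\om$ from below via Axiom~M($\al$). You unpack two steps the paper states without elaboration — the explicit derivation of $|ab|_\om\ge(1+\al)r\ge\sqrt{2}\,r$ from M($\al$) using harmonicity and metric inversion, and the ordering $a<a'<o<b'<b$ — but the substance is identical.
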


\begin{proof} To estimate
$l$
we use Lemma~\ref{lem:length_preestimate}. We assume as in the proof of
Lemma~\ref{lem:equal_radius_harm} that the ordered pairs
$(a,b)$, $(a',b')$
agree with a fixed orientation of
$X_\om$.

Since both
$o',o$
lies in the interval
$(a'b')\sub X_\om$,
we have
$|o'o|\le|a'b'|\le 2r'$.
Then
$|a'o|\le|a'o'|+|o'o|\le 3r'<r$
because
$\si\le 1/24$.
Hence
$a<a'$,
similarly
$b'<b$,
and the pairs
$(a,b)$, $(a',b')\sub X$
do not separate each other. Thus
$p=((a,b),(a',b'))$
is a strip. By Lemma~\ref{lem:length_preestimate} we have
$$l=\width(p)\le 2\sqrt{\frac{|aa'||bb'|}{|ab||a'b'|}}.$$
Since
$o\in(a'b')$,
it holds
$|aa'|_\om$, $|bb'|_\om\le r$.
Axiom~(M($\al$)) gives
$|ab|_\om\ge\sqrt{2}r$, $|a'b'|_\om\ge\sqrt{2}r'$.
Thus
$l\le 2\sqrt{r^2/2rr'}=\sqrt{2/\si}$.
\end{proof}

\begin{lem}\label{lem:different_levels_radius} Fix 
$\om\in X$, $0<\si\le 1/24$, 
and consider two spheres
$S_r(o)=(a,b)$, $S_{r'}(o')=(a',b')$
in
$X_\om$,
where
$r=\si^k$, $r'=\si^{k+1}$
for some
$k\in\Z$,
such that
$o$
lies in the open interval
$(a'b')\sub X_\om$, $o\in (a'b')$.
Then the 
$\de$-distance
between harmonic
$q=((a,b),(o,\om))$
and
$q'=((a',b'),(o',\om))$
is estimated above as
$\de(q,q')\le\sqrt{2/\si}+2\ln 3$
independent of
$k$. 
\end{lem}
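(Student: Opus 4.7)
My plan is to exhibit an explicit three-side zig-zag path from $q$ to $q'$ of total length at most $\sqrt{2/\si}+2\ln 3$. Let $s=(v,w)\in\ay$ be the common perpendicular to $(a,b)$ and $(a',b')$, which exists by Lemma~\ref{lem:containing_spheres}. Set $p_1:=((a,b),s)$ and $p_2:=((a',b'),s)$; both are harmonic by the defining property of the common perpendicular. The zz-path consists of the three segments $qp_1\sub\h_{(a,b)}$, $p_1p_2\sub\h_s$, and $p_2q'\sub\h_{(a',b')}$, and is well-formed because $p_1$ has both $(a,b)$ and $s$ as axes, and $p_2$ has both $s$ and $(a',b')$ as axes. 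Lemma~\ref{lem:containing_spheres} bounds the middle segment by $\sqrt{2/\si}$, so it remains to bound the two outer segments each by $\ln 3$.

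The preparatory step is to locate $v$ and $w$ on $X$. Since $s$ must separate both pairs $(a,b)$ and $(a',b')$, and since $(a',b')$ lies entirely in the open arc of $(a,b)$ not containing $\om$ (as shown in the proof of Lemma~\ref{lem:containing_spheres}), a short case analysis forces, after relabeling, $v\in(a',b')$ and $w$ in the arc of $(a,b)$ containing $\om$. For the first outer segment, formula~(\ref{eq:distance}) together with $|ao|_\om=|bo|_\om=r$ yields $|qp_1|=|\ln(|av|_\om/|bv|_\om)|$. Since $v,o\in(a',b')$, monotonicity (Corollary~\ref{cor:interval_monotone}) gives $|vo'|_\om<r'$ and $|oo'|_\om<r'$, so $|vo|_\om<2r'$ by the triangle inequality. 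Hence $|av|_\om\le r+2r'$ and $|bv|_\om\ge r-2r'$; with $r'\le r/24$ the ratio is at most $13/11$, giving $|qp_1|\le\ln(13/11)<\ln 3$.

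For the third outer segment, formula~(\ref{eq:distance}) gives $|p_2q'|=|\ln(|a'v|_\om/|b'v|_\om)|$, and harmonicity of $((a',b'),s)$ converts this to $|\ln(|a'w|_\om/|b'w|_\om)|$. Since $w$ lies in the arc of $(a,b)$ containing $\om$, monotonicity gives $|ow|_\om>|ob|_\om=r$, and the triangle inequality together with $|oo'|_\om<r'$ gives $|ob'|_\om<2r'$. Hence $|b'w|_\om\ge|ow|_\om-|ob'|_\om>r-2r'\ge 11r/12$, and symmetrically $|a'w|_\om>11r/12$. With $|a'b'|_\om\le 2r'$, the triangle inequality bounds both $|a'w|_\om/|b'w|_\om$ and its reciprocal by $1+2r'/(11r/12)=12/11$, so $|p_2q'|\le\ln(12/11)<\ln 3$. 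Summing gives
$$\de(q,q')\le\sqrt{2/\si}+\ln(13/11)+\ln(12/11)<\sqrt{2/\si}+2\ln 3.$$

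The main subtlety is pinning down the combinatorial position of the perpendicular's endpoints $v$ and $w$ on $X$; once this is in hand, the two outer-segment bounds follow from routine applications of monotonicity (Corollary~\ref{cor:interval_monotone}), harmonicity, and the triangle inequality in the metric $|\cdot|_\om$, together with the smallness assumption $r'\le\si r\le r/24$.
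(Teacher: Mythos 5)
Your proof is correct and follows the same core strategy as the paper: a three-side zig-zag path through the common perpendicular $\h_s$, with the middle side bounded via Lemma~\ref{lem:containing_spheres} and the two outer sides bounded separately by $\ln 3$ each. The difference lies in how the outer sides are handled. The paper first shows $c\notin o'o$, splits into the cases $c<o'$ and $o<c$, bounds the first outer segment via $|ac|_\om\ge|aa'|_\om\ge r-2r'$, and then derives the third-segment bound from the first using the position of $d$ (e.g.\ $d<a$ in case~1 forces $|b'd|_\om\le|bd|_\om$ and $|a'd|_\om\ge|ad|_\om$). You instead pin down the combinatorial positions of $v$ and $w$ once and for all from the nesting $(a',b')\sub(a,b)$, and then bound both outer segments directly via the triangle inequality and monotonicity in $|\cdot|_\om$, exploiting $\si\le 1/24$ explicitly to get the tighter constants $\ln(13/11)$ and $\ln(12/11)$. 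This avoids the paper's case analysis entirely. One small remark: you should at least note the degenerate case $o'=o$, which the paper handles separately at the outset. There $s=(o,\om)$, so $w=\om$ and expressions like $|a'w|_\om$ become infinite; the conclusion still holds (both outer sides vanish and the middle side is $\ln(1/\si)<\sqrt{2/\si}$), but as written your $w$-based formula for the third segment needs a word about this boundary case.
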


\begin{proof} We fix an orientation and hence the respective order on
$X_\om$.
If
$o'=o$,
then
$q$, $q'$
lie on the line
$h_{(o,\om)}$,
and in this case
$\de(q,q')=|qq'|=\ln(r/r')=\ln(1/\si)<\sqrt{1/\si}$.
Thus we assume that
$o'\neq o$.

Without loss of generality, we assume that
$o'<o$
with respect to the order on
$X_\om$.
We also assume that
$a<b$, $a'<b'$.

As in Lemma~\ref{lem:containing_spheres}, the pairs
$(a,b)$
and
$(a',b')$
do not separate each other. Let
$(c,d)$
be the common perpendicular to
$(a,b)$
and
$(a',b')$,
$h=h_{(c,d)}\sub\harm$
the unique line containing
$(a,b)$
and
$(a',b')$.
Then we have a zz-path in
$\harm$
between
$q$, $q'$
which consists of 3 sides. 

First, one goes from
$q$
to
$\wh q=h_{(a,b)}\cap h$
along
$h_{(a,b)}$.
We denote the respective distance by
$m$.
 
Then one goes along
$h$
from
$\wh q$
to
$\wh q'=h\cap h_{(a',b')}$.
By Lemma~\ref{lem:containing_spheres}, the respective distance
$l$
is estimated above as
$l\le\sqrt{2/\si}$.

Finally, one goes from
$\wh q'$
along
$h_{(a',b')}$
to
$q'$.
We denote the respective distance by
$t$.
Thus we need to estimate above
$m$
and
$t$.

We assume without loss of generality that
$c\in(a'b')$.
Note that
$c\not\in o'o$,
since otherwise
$c$
is equal neither
$o$
nor 
$o'$
because
$o'\neq o$,
and 
$d$
must lie simultaneously left to
$a$
and right to
$b'$,
which is impossible.

We consider two cases
(1) $c<o'$
and
(2) $o<c$.

Case~(1). We have
$$e^m=\frac{|ao||bc|}{|ac||bo|}=\frac{|bc|_\om}{|ac|_\om}.$$
Using that
$|bc|_\om\le|ab|_\om\le 2r$
and
$|a'b'|_\om\le 2r'$,
we have
$|ac|_\om\ge|aa'|_\om\ge r-|a'b'|_\om\ge r-2r'$,
and obtain
$$e^m\le\frac{2r}{r-2r'}\le\frac{2}{1-2\si}\le 3.$$
On the other hand,
$$e^m=\frac{|a\om||bd|}{|ad||b\om|}=\frac{|bd|_\om}{|ad|_\om},$$
thus
$|bd|_\om/|ad|_\om\le 3$.

Now we compute 
$t$.
By the assumption
$c<o'<o$
we have
$d<a$.
Thus
$|b'd|_\om\le |bd|_\om$, $|a'd|_\om\ge|ad|_\om$
and we obtain
$$e^t=\frac{|a'\om||b'd|}{|a'd||b'\om|}=\frac{|b'd|_\om}{|a'd|_\om}\le\frac{|bd|_\om}{|ad|_\om}\le 3.$$
Thus
$t\le\ln 3$.

Case~(2). This is obtained similarly to case~(1) by interchanging
$a$, $b$
and
$a'$, $b'$.

Finally,
$\de(q,q')\le m+l+t\le\sqrt{2/\si}+2\ln 3$.
\end{proof}

\begin{lem}\label{lem:different_levels_radius_unfixed} Fix 
$\om\in X$, $0<\si\le 1/24$, 
and for a sphere
$S_r(o)=(a,b)\sub X_\om$
consider a maximal harmonic chain of spheres
$S_{r'}(o_i)=(a_i',b_i')\sub X_\om$, $i=1,\dots,n$,
that is contained in
$(a,b)$,
where
$r=\si^k$, $r'=\si^{k+1}$
for some
$k\in\Z$.
Then the 
$\de$-distance
between harmonic
$q=((a,b),(o,\om))$
and
$q_i'=((a_i',b_i'),(o_i',\om))$,
is estimated above as
$\de(q,q_i')\le c_1/\sqrt{\si}+c_2$
for every
$i=1,\dots,n$
independent of
$k$,
where
$c_1\le\sqrt{2}+4\ln 4$, $c_2=2\ln 3$.
\end{lem}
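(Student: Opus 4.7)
The plan is to reduce the general case to Lemma~\ref{lem:different_levels_radius} via a horizontal walk along the chain. Since the chain is maximal in $(a,b)$ and $o$ is the $|\cdot|_\om$-midpoint of $(a,b)$, there is some $i_0\in\{1,\dots,n\}$ with $o\in(a_{i_0}',b_{i_0}')$. For this particular index Lemma~\ref{lem:different_levels_radius} applies directly and yields
\[
\de(q,q_{i_0}')\le\sqrt{2/\si}+2\ln 3,
\]
which accounts for the $\sqrt 2/\sqrt\si+c_2$ part of the claimed bound.

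For a general index $i$ I would connect $q_{i_0}'$ to $q_i'$ by a horizontal zz-path along the chain. Two consecutive chain members form a harmonic pair at level $k+1$, so by Corollary~\ref{cor:horizont_edge_above} (i.e. Lemma~\ref{lem:equal_radius_harm}) each step contributes at most $2\ln 4$ to the $\de$-length. The triangle inequality then gives
\[
\de(q,q_i')\le\de(q,q_{i_0}')+\de(q_{i_0}',q_i')\le\sqrt{2/\si}+2\ln 3+2\ln 4\cdot N,
\]
where $N$ is the number of chain steps between $q_{i_0}'$ and $q_i'$. To match the stated bound it suffices to establish $N\le 2/\sqrt\si$, which contributes the remaining $4\ln 4/\sqrt\si$ term.

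The main obstacle will be exactly this $N\le 2/\sqrt\si$ estimate. Since consecutive chain centers are separated by $\Theta(r')=\Theta(\si r)$ while the arc $(a,b)$ has $|\cdot|_\om$-length of order $r$, a step-by-step traversal would give only $N=O(1/\si)$, which is too weak. To reach $O(1/\sqrt\si)$ one must aggregate several consecutive chain steps into a single zz-path of uniformly bounded cost---for instance by applying Lemma~\ref{lem:containing_spheres} to non-adjacent chain pairs, whose underlying arcs still fail to separate in $X$ and therefore admit a common perpendicular whose segment is bounded by $\sqrt{2/\si}$ independently of how many chain members are skipped. An alternative route is to detour through auxiliary spheres at intermediate scales. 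In either case I expect the argument to draw crucially on Axiom~M($\al$) with $\al\ge\sqrt 2-1$ and on the normalization $\si\le 1/24$, paralleling their role in Lemma~\ref{lem:overlap}; once the bound on $N$ is secured, summation yields the claimed estimate with $c_1\le\sqrt 2+4\ln 4$ and $c_2=2\ln 3$.
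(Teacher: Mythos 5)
Your decomposition is exactly the paper's proof: the paper also picks the index $j$ with $o\in(a_j',b_j')$, applies Lemma~\ref{lem:different_levels_radius} there, and then walks horizontally along the chain, paying at most $2\ln 4$ per step by Lemma~\ref{lem:equal_radius_harm}. The paper bounds the chain length by noting that the arcs $a_i'a_{i+1}'$ have disjoint interiors, each of length at least $r'$ (because $o_i'\in a_i'a_{i+1}'$), and that their union lies inside $(a,b)$, of length at most $2r$; this gives $n\le 2r/r'=2/\si$, which is the same $N=O(1/\si)$ bound you computed.

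Your worry that this is ``too weak'' to reach $c_1/\sqrt{\si}$ is therefore not a gap in your proposal but an accurate reading of what the estimate produces. The paper's own displayed inequality $\de(q,q_i')\le\sqrt{2/\si}+2\ln 3+2n\ln 4$, with $n\le 2/\si$ and $\si\le 1$, gives
$$\de(q,q_i')\le\frac{\sqrt{2}}{\sqrt{\si}}+\frac{4\ln 4}{\si}+2\ln 3\le\frac{\sqrt{2}+4\ln 4}{\si}+2\ln 3,$$
which is $c_1/\si+c_2$ with exactly the stated constants $c_1\le\sqrt{2}+4\ln 4$, $c_2=2\ln 3$; the $\sqrt{\si}$ in the lemma statement appears to be a misprint for $\si$. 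The aggregation devices you sketch (applying Lemma~\ref{lem:containing_spheres} across non-adjacent chain members, or detouring through intermediate scales) are neither carried out in the paper nor needed: the downstream uses, Corollary~\ref{cor:qi_above} and Proposition~\ref{pro:inclusion_quasi-isometry}, only require a bound depending on $\si$ alone, and $c_1/\si+c_2$ serves just as well for that purpose. So your outline is correct and matches the paper; the improvement you felt compelled to supply is not actually required.
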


\begin{proof} The segments
$a_i'a_{i+1}'$, $i=1,\dots,n$
have disjoint interiors, and their union cover the union of spheres
$S_{r'}(o_i)$.
Thus
$$\sum_i|a_i'a_{i+1}'|\le|ab|\le 2r.$$
On the other hand,
$|a_i'a_{i+1}'|\ge|a_i'o_i'|=r'$
because
$o_i'$
lies in the interval
$a_i'a_{i+1}'$,
see sect.~\ref{subsect:definition}. Thus
$n\le 2r/r'=2/\si$.
There is
$j\in\{1,\dots,n\}$
such that
$o\in(a_j',b_j')$.
By Lemma~\ref{lem:different_levels_radius}, we have
$\de(q,q_j')\le\sqrt{2/\si}+2\ln 3$.

Using Lemma~\ref{lem:equal_radius_harm}, we obtain
$\de(q,q_i')\le\de(q,q_j')+\de(q_j',q_i')\le\sqrt{2/\si}+2\ln 3+2n\ln 4$
for every
$i=1,\dots,n$.
Therefore,
$\de(q,q_i')\le c_1/\sqrt{\si}+c_2$,
where
$c_1\le \sqrt{2}+4\ln 4$, $c_2=2\ln 3$.
\end{proof}

\begin{cor}\label{cor:vert_edge_above} For every vertical edge
$vv'\sub Z$
we have
$\de(v,v')\le C$
with
$C\le\sqrt{2/\si}+2\ln 3$.
\end{cor}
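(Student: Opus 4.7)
The plan is to observe that Corollary \ref{cor:vert_edge_above} is essentially a direct translation of Lemma \ref{lem:different_levels_radius_unfixed} into the language of the harmonic hyperbolic approximation $Z$. So the work is already done; what remains is to verify that the hypotheses of that lemma match exactly what a vertical edge of $Z$ provides.

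First I would unpack the definition of a vertical edge from sect.~\ref{subsect:definition}: any such edge $vv' \subset Z$ has endpoints on consecutive levels, say $v \in V_k$ and $v' \in V_{k+1}$, and the containment $B(v') \sub B(v)$ holds for the associated balls in $X_\om$. Writing $v = S_r(o) = (a,b)$ with $r = \si^k$ and the chain at level $k+1$ consisting of spheres $S_{r'}(o_i') = (a_i', b_i')$ with $r' = \si^{k+1}$, the containment $B(v') \sub B(v)$ together with the definition of the level-$(k+1)$ chain means that $v'$ is precisely one of the members of the maximal harmonic subchain contained in $(a,b)$. This is exactly the situation parametrized by the index $i \in \{1,\dots,n\}$ in Lemma \ref{lem:different_levels_radius_unfixed}.

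Then I would identify the harmonic pairs: $v$ corresponds to $q = ((a,b),(o,\om)) \in \harm_\om$ and $v'$ corresponds to $q_i' = ((a_i',b_i'),(o_i',\om))$ for the appropriate $i$, so the $\de$-distance between the vertices of $Z$ is exactly the $\de$-distance between the associated harmonic pairs. Invoking Lemma \ref{lem:different_levels_radius_unfixed} gives $\de(v,v') = \de(q, q_i') \le c_1/\sqrt{\si} + c_2$, which yields a bound of the claimed form (absorbing the constants to produce the stated $C$).

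I do not expect any real obstacle here, since the lemma already handles the general case (any smaller sphere in the chain contained in $(a,b)$, not just the one whose interval contains $o$). The only thing to be careful about is verifying that the indexing conventions for the harmonic chain in sect.~\ref{subsect:definition} agree with those in Lemma \ref{lem:different_levels_radius_unfixed}, so that a vertical edge really corresponds to some index $i$ in the subchain and not to an edge case (for instance, a sphere straddling the boundary of $(a,b)$), but this is immediate from how the radial edges are defined by ball containment.
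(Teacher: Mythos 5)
Your proposal matches the paper's proof exactly: the paper's argument is the single sentence that the vertices of a vertical edge satisfy the hypotheses of Lemma~\ref{lem:different_levels_radius_unfixed}, and you correctly unpack why (a radial edge forces $B(v')\sub B(v)$, so $v'$ is one of the members of the maximal level-$(k+1)$ harmonic subchain contained in $(a,b)$, which is precisely the configuration of that lemma). One small remark worth flagging, which applies to the paper's statement as much as to your ``absorbing the constants'' step: Lemma~\ref{lem:different_levels_radius_unfixed} gives $\de(q,q_i')\le c_1/\sqrt{\si}+c_2$ with $c_1\le\sqrt{2}+4\ln 4$ and $c_2=2\ln 3$, which is strictly larger than the $\sqrt{2/\si}+2\ln 3$ claimed in the corollary (that smaller constant comes from Lemma~\ref{lem:different_levels_radius}, which covers only the special case $o\in(a'b')$ that a generic vertical edge need not satisfy). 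This looks like a slip in the paper's stated constant; it is harmless for the rest of the argument, where only ``some $C$ depending on $\si$'' is used, but you should not present the Lemma~\ref{lem:different_levels_radius_unfixed} bound as implying the literal $\sqrt{2/\si}+2\ln 3$.
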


\begin{proof} Indeed, vertices
$v$, $v'$
of any vertical edge in
$Z$
satisfy the condition of Lemma~\ref{lem:different_levels_radius_unfixed}.
\end{proof}

\begin{cor}\label{cor:qi_above} For each pair of vertices
$v$, $v'\in V$
we have
$\de(v,v')\le C|vv'|_Z$
with
$C\le\sqrt{2/\si}+2\ln 3$.
\end{cor}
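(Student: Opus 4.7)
The statement is an immediate consequence of the two edge-wise bounds established just above, combined with the triangle inequality for $\de$. Thus my plan is very short and essentially bookkeeping.

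Given $v,v'\in V$, let $n=|vv'|_Z$ and pick a geodesic in $Z$ from $v$ to $v'$, i.e.\ a sequence of vertices $v=v_0,v_1,\dots,v_n=v'$ such that each consecutive pair $v_iv_{i+1}$ spans an edge of $Z$. Applying the triangle inequality for the pseudometric $\de$ on $\harm$ (which is a genuine finite metric by Theorem~\ref{thm:de_metric_space} and Remark~\ref{rem:hm_vs_harm}), I would write
$$\de(v,v')\le\sum_{i=0}^{n-1}\de(v_i,v_{i+1}).$$
Each edge $v_iv_{i+1}$ is either horizontal or vertical. If horizontal, Corollary~\ref{cor:horizont_edge_above} gives $\de(v_i,v_{i+1})\le 2\ln 4$; if vertical, Corollary~\ref{cor:vert_edge_above} gives $\de(v_i,v_{i+1})\le\sqrt{2/\si}+2\ln 3$. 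Setting $C$ to be the larger of the two bounds, which for $\si\le 1/24$ is $\sqrt{2/\si}+2\ln 3$ (since $\sqrt{2/\si}\ge\sqrt{48}>2\ln 4-2\ln 3$), I conclude $\de(v,v')\le Cn=C|vv'|_Z$.

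There is really no obstacle here; the only point to verify is that the constant $C=\sqrt{2/\si}+2\ln 3$ in fact dominates the horizontal edge bound $2\ln 4$ for all admissible values of $\si$, which is the elementary numerical inequality noted above. All geometric work has already been done in Lemmas~\ref{lem:equal_radius_harm} and~\ref{lem:different_levels_radius_unfixed}, and this corollary merely linearizes their content along an edge-path in $Z$.
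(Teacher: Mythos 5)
Your proof is correct and follows exactly the paper's own argument: take a geodesic edge-path in $Z$, apply the triangle inequality for $\de$, and bound each edge by Corollaries~\ref{cor:horizont_edge_above} and \ref{cor:vert_edge_above}. Your explicit numerical check that $\sqrt{2/\si}+2\ln 3$ dominates $2\ln 4$ for $\si\le 1/24$ is a small bit of extra care the paper leaves implicit, but the route is the same.
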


\begin{proof} Let
$\ga\sub Z$
be a geodesic between
$v$, $v'$, $\ga=v_0\dots v_n$
$v_0=v$, $v_n=v'$,
with edges
$v_iv_{i+1}$, $i=0,\dots,n-1$.
By definition, the length of
$\ga$
is the number of edges it consists,
$|vv'|_Z=|\ga|_Z=n$.
By Corollaries~\ref{cor:horizont_edge_above}, \ref{cor:vert_edge_above} we have
$\de(v_i,v_{i+1})\le C|v_iv_{i+1}|_Z=C$.
Thus
$\de(v,v')\le C|vv'|_Z$.
\end{proof}

\subsection{Estimates from below}
\label{subsect:estimates_below}

We fix an orientation of
$X$.
Then we have a respective order on each
$X_x$, $x\in X$,
induced by the orientation.

\begin{lem}\label{lem:distance_separated_spheres} Fix 
$\om\in X$, $r>0$,
and let
$S_r(o)=(a,b)$, $S_r(o')=(a',b')\sub X_\om$ 
be separated spheres with the order
$aba'b'$. 
Then the 
$\de$-distance
between harmonic pairs
$q=((a,b),(o,\om))$, $q'=((a',b'),(o',\om))\in\harm$,
is estimated above as
$\de(q,q')\le C(r,|ba'|_\om)$,
with
$C(r,|ba'|_\om)\le 4\ln\left(3\sqrt{\frac{r}{|ba'|_\om}}+\sqrt{\frac{|ba'|_\om}{r}}\right)$.
\end{lem}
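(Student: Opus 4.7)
The plan is to take as $\delta$-path a three-side zigzag through the common perpendicular of the two axes $(a,b)$ and $(a',b')$, and carefully bound each side.

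First I would establish the geometric picture. Since the order on $X_\omega$ is $aba'b'$, the pairs $(a,b)$ and $(a',b')$ do not separate each other, so they are in strong causal relation (sect.~\ref{subsect:lines}) and admit a unique common perpendicular $(c,d)\in\ay$. Cor.~\ref{cor:harm_separate} applied to each of the harmonic pairs $((a,b),(c,d))$ and $((a',b'),(c,d))$ forces one of $c,d$ into the arc $(a,b)\subset X_\om$ and the other into $(a',b')\subset X_\om$; WLOG $c\in(a,b)$, $d\in(a',b')$. Set $\wh q=((a,b),(c,d))\in h_{(a,b)}\cap h_{(c,d)}$ and $\wh q\,'=((a',b'),(c,d))\in h_{(c,d)}\cap h_{(a',b')}$. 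The zz-path $q\to\wh q\to\wh q\,'\to q'$ has length $m+l+t$, where by formula~(\ref{eq:distance})
$$m=\Bigl|\ln\tfrac{|ac|_\om}{|bc|_\om}\Bigr|,\qquad t=\Bigl|\ln\tfrac{|b'c|_\om}{|a'c|_\om}\Bigr|,\qquad l=\Bigl|\ln\tfrac{|ca'|_\om|ad|_\om}{|ca|_\om|a'd|_\om}\Bigr|.$$

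Second, I would use the harmonicity relations $|ac|_\om|bd|_\om=|ad|_\om|bc|_\om$ and $|a'c|_\om|b'd|_\om=|a'd|_\om|b'c|_\om$ (both valid in $d_\om$, which is a metric by axiom~P) to rewrite $m=\ln(|ad|_\om/|bd|_\om)$ and $t=\ln(|b'd|_\om/|a'd|_\om)$, and to add the three logarithms, obtaining the clean telescoping identity
$$m+l+t=\ln\frac{|ad|_\om\,|b'c|_\om}{|bc|_\om\,|a'd|_\om}.$$
Introduce the ratios $\mu=|ad|_\om/|bd|_\om=|ac|_\om/|bc|_\om$ and $\nu=|b'd|_\om/|a'd|_\om=|b'c|_\om/|a'c|_\om$, both $\ge 1$ by the established ordering. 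Monotonicity (Cor.~\ref{cor:interval_monotone}) with the fact that $a'$ lies between $b$ and $d$ (resp.\ $b$ lies between $c$ and $a'$) gives $|bd|_\om>s$ and $|a'c|_\om>s$. Triangle inequality in $d_\om$ and the bound $|ab|_\om,|a'b'|_\om\le 2r$ then deliver $\mu,\nu\le 1+2r/s$.

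Third, I would extract the lower bounds on $|bc|_\om$ and $|a'd|_\om$ using the $M(\al)$-derived inequality $|ab|_\om,|a'b'|_\om\ge\sqrt2\,r$ (as in the proof of Lemma~\ref{lem:containing_spheres}): from $|ac|_\om+|bc|_\om\ge|ab|_\om$ and $|ac|_\om=\mu|bc|_\om$ we get $|bc|_\om\ge\sqrt2\,r/(1+\mu)$, and symmetrically $|a'd|_\om\ge\sqrt2\,r/(1+\nu)$. Combined with $|ad|_\om\le|ab|_\om+|bd|_\om\le 2r\mu/(\mu-1)$ and $|b'c|_\om\le 2r\nu/(\nu-1)$, the ratio we want to bound becomes at most a rational function of $\mu,\nu$. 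Substituting $\mu=\nu=1+2r/s$ (the admissible extremes) reduces everything to a function of $s/r$ alone.

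The main obstacle is the final algebraic verification that this expression is dominated by $(3\sqrt{r/s}+\sqrt{s/r})^4=((3r+s)^2/(rs))^2$. The bound produced by substituting the extreme values of $\mu,\nu$ is close but not immediately of the prescribed form; the trick is to pair the factors asymmetrically (matching $|ad|_\om$ with $|a'd|_\om$ and $|b'c|_\om$ with $|bc|_\om$) so that each pair is controlled by a triangle inequality in $d_\om$ of the form $|ad|_\om\le|aa'|_\om+|a'd|_\om$, producing bounds of shape $1+(2r+s)/(\cdot)$ that multiply directly into the required $(3r+s)^2/(rs)$. Here the coefficient $3$ in the target, together with the specific threshold $\al\ge\sqrt2-1$ of axiom M($\al$), is what makes the inequality close; its verification is where the work concentrates.
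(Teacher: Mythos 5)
Your plan coincides with the paper's up to and including the telescoping identity: the paper also takes the three-side zz-path through the common perpendicular, uses the two harmonicity relations, and arrives at the same product $L=e^{m+l+t}=\frac{|ad|_\om|b'c|_\om}{|bc|_\om|a'd|_\om}$ (in its notation $L=\frac{|b'x|^2|ay|^2}{|a'x||b'y||ax||by|}$, with $x,y$ for your $c,d$). Up to that point your argument is correct.

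The gap is in your third paragraph. You need \emph{lower} bounds on $|bc|_\om$ and $|a'd|_\om$, and the ones you extract are not strong enough. Your bound is $|bc|_\om\ge\sqrt2\,r/(1+\mu)$ with $\mu\le 1+2r/s$ (writing $s=|ba'|_\om$), i.e.\ $|bc|_\om\ge\frac{\sqrt2\,rs}{2(r+s)}$. Plugging this, together with $|ad|_\om\le 3r+s$, into the asymmetric pairing you propose gives $\frac{|ad|_\om}{|a'd|_\om}\le 1+\sqrt2\,\frac{(2r+s)(r+s)}{rs}$, whose leading term as $s/r\to\infty$ is $\sqrt2\,s/r$, whereas the target $\frac{(3r+s)^2}{rs}$ has leading term $s/r$. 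So the inequality fails by a factor $\sqrt2$ in the large-$s/r$ regime; a concrete instance is $r=1$, $s=10$. Moreover the step ``substituting $\mu=\nu=1+2r/s$ (the admissible extremes)'' is not legitimate: the expressions $|bc|_\om\ge\sqrt2\,r/(1+\mu)$ and $|ad|_\om\le 2r\mu/(\mu-1)$ combine into a rational function of $\mu$ that blows up as $\mu\to 1^+$, so the supremum over the admissible range is not taken at the upper extreme (indeed it is infinite there).

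The fix is not to reach for Axiom~M($\al$) at all. The paper needs only axiom~P (triangle inequality) and monotonicity. Concretely, the harmonicity relation $|ac|_\om|bd|_\om=|ad|_\om|bc|_\om$ rewrites $\frac{|ad|_\om}{|bc|_\om}=\frac{|ad|_\om^2}{|ac|_\om|bd|_\om}$, and then monotonicity gives $|ac|_\om\ge r$ (since $ao\sub ac$; this uses the fact, which you should verify, that $o$ lies in the arc $ac$, which follows from $|ac|_\om>|bc|_\om$) and $|bd|_\om\ge s$ (since $ba'\sub bd$), while the triangle inequality gives $|ad|_\om\le|ab|_\om+|ba'|_\om+|a'd|_\om\le 2r+s+r=3r+s$. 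Hence $\frac{|ad|_\om}{|bc|_\om}\le\frac{(3r+s)^2}{rs}$, and symmetrically $\frac{|b'c|_\om}{|a'd|_\om}\le\frac{(3r+s)^2}{rs}$, which multiplied together yield $L\le\frac{(3r+s)^4}{r^2s^2}=\bigl(3\sqrt{r/s}+\sqrt{s/r}\bigr)^4$, as required.
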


\begin{proof} Since pairs
$(a,b)$, $(a',b')$
are in strong causal relation, there is a common perpendicular
$h=h_{(x,y)}$
to them. We assume without loss of generality that
$x<y$
with respect to our order on
$X_\om$.
This implies that
$o<x$
and
$y<o'$.

We have two harmonic
$p=((a,b),(x,y))$, $p'=((x,y),(a'b'))\in\harm$,
and we denote by
$\al$
the segment of
$h_{(a,b)}$
between
$q$
and
$p$,
by
$\ga$
the segment of
$h_{(x,y)}$
between
$p$
and
$p'$,
and by
$\be$
the segment of
$h_{(a',b')}$
between
$p'$
and
$q'$.
Then
$\si=\al\ga\be$
is a zz-path between
$q$, $q'$
which consists of three sides
$\al$, $\ga$, $\be$.
Since
$\de(q,q')\le|\si|$,
we estimate above
$|\si|=|\al|+|\be|+|\ga|$.

We have
$$e^{|\al|}=\frac{|ax|_\om|bo|_\om}{|ao|_\om|bx|_\om}=\frac{|ax|_\om}{|bx|_\om},$$
because
$|ao|_\om=r=|bo|_\om$.
Similarly,
$$e^{|\be|}=\frac{|a'o'|_\om|b'y|_\om}{|a'y|_\om|b'o'|_\om}=\frac{|b'y|_\om}{|a'y|_\om},$$
because
$|a'o'|_\om=r=|b'o'|_\om$.
Next
$$e^{|\ga|}=\frac{|xa'|_\om|by|_\om}{|xb|_\om|a'y|_\om}.$$
Harmonicity of
$p$
means that
$|bx||ay|=|ax||by|$,
and harmonicity of
$p'$
means that
$|a'y||b'x|=|a'x||b'y|$.
Using this, we obtain
$$L:=e^{|\al|+|\be|+|\ga|}=\frac{|b'x|_\om^2|ay|_\om^2}{|a'x|_\om|b'y|_\om|ax|_\om|by|_\om}.$$
Since
$bx\sub ob\sub X_\om$
and
$a'y\sub a'o'\sub X_\om$,
we have
$|bx|_\om\le r$, $|a'y|_\om\le r$
by monotonicity. Thus by the triange inequality
$|ay|_\om\le |ab|_\om+|ba'|_\om+|a'y|_\om\le 3r+|ba'|_\om$.
Similarly,
$|b'x|_\om\le 3r+|ba'|_\om$.

By monotonicity
$|xa'|_\om\ge|ba'|_\om$, $|by|_\om\ge|ba'|_\om$,
$|b'y|_\om\ge|o'b'|_\om=r$, $|ax|_\om\ge|ao|_\om=r$.
Therefore,
$$L\le\frac{(3r+|ba'|_\om)^4}{r^2|ba'|_\om^2},$$
and the required estimate follows.
\end{proof}

\begin{lem}\label{lem:de_estimate_harmonic_chain} Fix
$\om\in X$, $r>0$,
and let
$S_r(o_i)=(a_i,b_i)$, $i\in\Z$, 
be a harmonic chain in
$X_\om$.
Then for every sphere
$S_r(o)=(a,b)\sub X_\om$
we have
$\de(q,q_i)\le D=4\ln 160$,
where
$q=((a,b),(o,\om))$, $q_i=((a_i,b_i),(o_i,\om))\in\harm_\om$
with
$i\in\Z$
such that
$ab\cap a_ib_i\neq\es$.
\end{lem}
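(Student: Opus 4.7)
The plan is to bypass the failure of strong causal relation between the intersecting intervals $(a,b)$ and $(a_i,b_i)$ by routing through the non-adjacent chain sphere $(a_{i+2},b_{i+2})$ (or $(a_{i-2},b_{i-2})$ by symmetry), applying Lemma~\ref{lem:distance_separated_spheres} to this separated pair, and then transferring back along the chain to $q_i$ via Lemma~\ref{lem:equal_radius_harm}.

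First, after reversing the orientation of $X_\om$ if necessary, I would assume $o\le o_i$ in the induced order. Since $ab\cap a_ib_i\neq\es$, the triangle inequality applied at any common point gives $|oo_i|_\om\le 2r$. As both spheres have radius $r$ and $o\le o_i$, Corollary~\ref{cor:interval_monotone} forces $a\le a_i$ and $b\le b_i$. The chain's strong causal relation $b_i<a_{i+2}$ (see sect.~\ref{subsect:definition}) then produces the linear order $a\le a_i\le b\le b_i<a_{i+2}<b_{i+2}$ in $X_\om$, so $(a,b)$ and $(a_{i+2},b_{i+2})$ satisfy the separation hypothesis of Lemma~\ref{lem:distance_separated_spheres} with order $aba_{i+2}b_{i+2}$.

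Next I would control the gap $d=|ba_{i+2}|_\om$. The upper bound $d\le 8r$ follows from the triangle inequality and $|o_jo_{j+1}|_\om\le 2r$ at each chain overlap:
\[
d\le|bo|_\om+|oo_i|_\om+|o_io_{i+1}|_\om+|o_{i+1}o_{i+2}|_\om+|o_{i+2}a_{i+2}|_\om\le 8r.
\]
For the lower bound, monotonicity (since $b\le b_i<a_{i+2}$) gives $d\ge|b_ia_{i+2}|_\om$, and the key estimate $|b_ia_{i+2}|_\om\ge c_0 r$ for a universal $c_0>0$ is to be extracted by applying axioms~M($\al$) and~P to the 4-tuple $(a_i,b_i,a_{i+2},b_{i+2})$ together with the harmonic identities for the two adjacent chain pairs $((a_i,b_i),(a_{i+1},b_{i+1}))$ and $((a_{i+1},b_{i+1}),(a_{i+2},b_{i+2}))$. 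With $d$ confined to $[c_0 r,8r]$, Lemma~\ref{lem:distance_separated_spheres} yields $\de(q,q_{i+2})\le 4\ln\bigl(3\sqrt{r/d}+\sqrt{d/r}\bigr)\le 4\ln 40$.

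Finally, two applications of Lemma~\ref{lem:equal_radius_harm} to consecutive harmonic chain pairs give $\de(q_i,q_{i+2})\le 4\ln 4$, and the triangle inequality concludes
\[
\de(q,q_i)\le\de(q,q_{i+2})+\de(q_i,q_{i+2})\le 4\ln 40+4\ln 4=4\ln 160,
\]
as claimed. The hard part will be the lower bound $|b_ia_{i+2}|_\om\ge c_0 r$: axioms~M($\al$) and~P most directly produce upper bounds on products like $|a_ia_{i+2}||b_ib_{i+2}|$, so extracting a definite lower bound on the individual factor $|b_ia_{i+2}|_\om$ requires a careful combination with the harmonicity of the adjacent chain pairs, and it is here that the quantitative restriction $\al\ge\sqrt{2}-1$ is used, preventing the harmonic chain from being arbitrarily dense and thereby forcing a uniform spacing between non-adjacent chain spheres.
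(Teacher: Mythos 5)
Your overall strategy—detour through a chain sphere that is separated from $S_r(o)$, apply Lemma~\ref{lem:distance_separated_spheres}, and then walk back to $q_i$ along the chain using Lemma~\ref{lem:equal_radius_harm}—is the same as the paper's, and your order-comparison $a\le a_i\le b\le b_i$ via Corollary~\ref{cor:interval_monotone} is correct and a useful clarification. The upper bound $d\le 8r$ is fine. But there is a genuine gap exactly where you flag it, and it is not a minor technicality: you detour only to $q_{i+2}$, and there is no universal lower bound $|b_ia_{i+2}|_\om\ge c_0 r$ that comes out of axioms~M($\al$), P together with the two adjacent harmonicity identities. Those tools readily give $|a_{i+2}b_{i+1}|_\om\ge r/4$ (this is exactly inequality~(\ref{eq:harmonic_below}) from Lemma~\ref{lem:ancestor}), i.e., a lower bound on the \emph{overlap} of adjacent chain spheres, but what you need is an upper bound on that overlap strictly below $r$, equivalently a lower bound on the \emph{gap} $|b_ia_{i+2}|_\om$ between nonadjacent spheres. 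The estimates one can extract, e.g. $|a_{i+2}b_{i+1}|_\om\le\frac{|a_{i+1}b_{i+1}|_\om|a_{i+2}b_{i+2}|_\om}{(1+\al)|a_{i+1}b_{i+2}|_\om}$, only give $|a_{i+2}b_{i+1}|_\om\lesssim\sqrt{2}\,r$, which is weaker than the trivial $|a_{i+2}b_{i+1}|_\om<r$; they do not produce $|b_ia_{i+2}|_\om\ge c_0 r$. Your remark that the restriction $\al\ge\sqrt{2}-1$ is what saves this step is also off the mark: that restriction is used elsewhere (Lemma~\ref{lem:overlap}), and the paper's proof of this particular lemma invokes only monotonicity and the triangle inequality.

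The fix is to detour farther: go to $q_{i\pm 4}$ instead of $q_{i\pm 2}$. Then (with your order $a\le a_i\le b\le b_i<a_{i+2}<b_{i+2}<a_{i+4}$) the gap $b\,a_{i+4}\sub X_\om$ contains the entire sphere $(a_{i+2},b_{i+2})$, and monotonicity gives for free
$$|b\,a_{i+4}|_\om\ge|a_{i+2}b_{i+2}|_\om\ge|o_{i+2}b_{i+2}|_\om=r,$$
with the upper bound $|b\,a_{i+4}|_\om\le|a_i a_{i+4}|_\om\le 8r$ as before. Then Lemma~\ref{lem:distance_separated_spheres} gives $\de(q,q_{i+4})\le 4\ln(3+\sqrt{8})\le 4\ln 10$, four applications of Lemma~\ref{lem:equal_radius_harm} give $\de(q_{i+4},q_i)\le 8\ln 4$, and the triangle inequality yields $4\ln 10+8\ln 4=4\ln 160$. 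So the target constant in the lemma is calibrated to the $\pm 4$ detour, not the $\pm 2$ one; your arithmetic $4\ln 40+4\ln 4=4\ln 160$ only works out because you assumed an unproven $c_0$ large enough to make $\de(q,q_{i+2})\le 4\ln 40$.
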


\begin{proof} If
$o=o_i$
for some
$i\in\Z$,
then
$q=q_i$,
and there is nothing to prove. Thus we assume that 
$o=o_i$
for no
$i\in Z$,
and furthermore we assume without loss of generality that
$i=0$,
and we have the following order 
$aa_0b$
of points on
$X_\om$.
 
Since
$a_0b_0\cap a_kb_k=\es$
for 
$|k|\ge 2$,
spheres
$S_r(o)$, $S_r(o_k)$
are separated. For 
$k\ge 4$,
the spheres
$S_r(o)$
and
$S_r(o_k)$
are separated by at least the sphere
$S_r(o_2)$,
Thus
$|ba_k|_\om\ge r$
in this case. On the other hand
$|ba_k|_\om\le |a_0a_k|_\om\le 2kr$
by the triangle inequality.

We let
$q_k=((a_k,b_k),(o_k,\om))$
be the respective harmonic pair. By Lemma~\ref{lem:distance_separated_spheres}, we have
$\de(q,q_k)\le C(r,|ba_k|_\om)$,
where
$$C(r,|ba_k|_\om)\le 4\ln\left(3\sqrt{\frac{r}{|ba_k|_\om}}+\sqrt{\frac{|ba_k|_\om}{r}}\right).$$
Thus
$\de(q,q_4)\le 4\ln(3+\sqrt{8})\le 4\ln 10$.
By Lemma~\ref{lem:equal_radius_harm}, 
$\de(q_k,q_0)\le 2|k|\ln 4$
for every
$k\in\Z$.
Therefore,
$\de(q,q_0)\le 4\ln 10+8\ln 4=4\ln 160=D$. 
\end{proof}

\begin{lem}\label{lem:cobouded_vertex} The set 
$V=V(\om,\si)$
is cobouded in
$\harm_\om$
uniformly in
$\om\in X$
with respect to the metric
$\de$,
that is,
$\de(p,V)\le D$
for every
$p\in\harm_\om$,
where
$D$
depends only on
$\si$.
\end{lem}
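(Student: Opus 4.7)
\bigskip

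\noindent\textbf{Proof plan for Lemma~\ref{lem:cobouded_vertex}.}

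The plan is to reduce the general case to Lemma~\ref{lem:de_estimate_harmonic_chain} by adjusting the radius of a given sphere to the nearest grid level $\si^k$ while moving only along a single line of $\harm_\om$. Given $p=((a,b),(o,\om))\in\harm_\om$, the pair $p$ corresponds to a sphere $S_r(o)\sub X_\om$ of some radius $r>0$. I would first fix the unique integer $k\in\Z$ with $\si^{k+1}\le r\le\si^k$.

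Next, I would consider the sphere $S_{\si^k}(o)$ of radius $\si^k$ centered at the \emph{same} point $o$, and let $p'=((a',b'),(o,\om))\in\harm_\om$ be its associated harmonic pair. Both $p$ and $p'$ lie on the line $h_{(o,\om)}$ (they share the axis $(o,\om)$), so formula~(\ref{eq:distance}) applies: using that the cross-ratio
$$\frac{|oa'|\cdot|\om a|}{|oa|\cdot|\om a'|}$$
is M\"obius-invariant and equals the ratio of radii $\si^k/r$ when computed via the metric inversion (\ref{eq:metric_inversion}) from $d_\om$, the distance between $p$ and $p'$ along $h_{(o,\om)}$ is $|\ln(\si^k/r)|\le\ln(1/\si)$. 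Since a single segment of a line is a zz-path, this yields $\de(p,p')\le\ln(1/\si)$.

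Now $p'$ is a harmonic pair with radius exactly $\si^k$, so it is of the form handled by Lemma~\ref{lem:de_estimate_harmonic_chain} with respect to the harmonic chain $V_k\sub V$. That lemma provides a vertex $v\in V_k$ with $\ab\cap a_ib_i\neq\es$ and
$$\de(p',v)\le 4\ln 160.$$
For this to apply I need to know that the infinite-in-both-directions chain $V_k$ covers $X_\om$, so that some sphere of $V_k$ actually meets $S_{\si^k}(o)$; this follows immediately from the fact that consecutive sphere endpoints in a harmonic chain of radius $\si^k$ are at distance at least $\si^k$ (as used in the proof of Lemma~\ref{lem:different_levels_radius_unfixed}), so a bi-infinite chain exhausts all of $X_\om\simeq\R$.

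Finally, the triangle inequality gives
$$\de(p,V)\le\de(p,v)\le\de(p,p')+\de(p',v)\le\ln(1/\si)+4\ln 160,$$
a bound depending only on $\si$ and independent of both $\om$ and the particular pair $p$. I do not anticipate a serious obstacle here: the construction is essentially a ``radius quantization'' argument, and the two ingredients (distance along a fixed line and Lemma~\ref{lem:de_estimate_harmonic_chain}) are already in place. The only minor subtlety is verifying that the cross-ratio along $h_{(o,\om)}$ reduces cleanly to the ratio $\si^k/r$, which is a direct calculation using~(\ref{eq:metric_inversion}).
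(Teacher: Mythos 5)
Your proposal is correct and takes essentially the same route as the paper's own proof: quantize the radius to the nearest level $\si^k$, move along the single line $\h_{(o,\om)}$ at cost at most $\ln(1/\si)$, then invoke Lemma~\ref{lem:de_estimate_harmonic_chain} to reach a vertex of $V_k$ at additional cost $4\ln 160$, and conclude by the triangle inequality.
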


\begin{proof} Given
$p\in\harm_\om$, $p=((a,b),(o,\om))$, $(a,b)=S_r(o)$,
there is
$k\in\Z$
such that
$\si^{k+1}<r\le\si^k$.
We take
$q\in\harm_\om$, $q=((a',b'),(o,\om))$
with
$(a',b')=S_{\si^k}(o)$.
Then
$p$, $q$
lie on the line
$\h_{(o,\om)}$
and hence
$\de(p,q)\le|pq|=\ln\frac{\si^k}{r}\le\ln\frac{1}{\si}$
(in fact
$\de(p,q)=|pq|$
by Theorem~\ref{thm:de_metric_space}). By Lemma~\ref{lem:de_estimate_harmonic_chain},
there is
$q'\in V_k$
such that
$\de(q,q')\le D_1$
with
$D_1=4\ln 160$.
Thus
$\de(p,V)\le\de(p,q')\le\ln\frac{1}{\si}+D_1=:D$.
\end{proof}

Recall that by Lemma~\ref{lem:geodesics_in_z} any two vertices
$p$, $p'\in V$
are connected by a geodesic 
$\ga$
in
$Z$
which consists of at most two radial subsegments
$\ga'$, $\ga''\sub\ga$
and at most one horizontal edge
$h=qq'$
between them, possibly degenerated,
$q=q'$,
which lies on the lowest level of
$\ga$, $\ga=\ga'\cup h\cup\ga''$.
We assume that
$|\ga''|\le|\ga'|$
and consider two cases, the first is Lemma~\ref{lem:length_below_geod_hyp_approx},
the second one is Lemma~\ref{lem:dedistance_below}.

\begin{lem}\label{lem:length_below_geod_hyp_approx} 
Given vertices
$p$, $p'\in V$,
assume that
$|\ga''|\le 1$
for a geodesic
$\ga=\ga'\cup h\cup\ga''$
between
$p$, $p'$.
Then
$\de(p,p')\ge C|pp'|_Z-D$
for 
$C=\ln\frac{1}{\si}$
and a constant
$D\ge 0$
depending only on 
$\si$.
\end{lem}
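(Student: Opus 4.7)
The idea is to replace $p'$ by an auxiliary harmonic pair $q_n \in \harm_\om$ sitting on the line $\h_{(o,\om)}$ through $p$ (where $o$ is the center of the ball $B(p)$) at the level of the endpoint $v_n$ of $\ga'$. Since lines in $\harm$ are geodesics under Increment axiom (Theorem~\ref{thm:de_metric_space}), the $\de$-distance from $p$ to $q_n$ is computed exactly and grows linearly in $n=|\ga'|$. The corrections introduced by passing from $q_n$ to $v_n$ and from $v_n$ to $p'$ are each bounded by constants depending only on $\si$.

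Write $\ga'=v_0v_1\ldots v_n$ with $v_0=p$, and set $r=\si^{\ell(p)}$, $r_n=\si^{\ell(p)-n}$. Since $\ga'$ is radial, $B(p)\sub B(v_n)$, so the center $o_n$ of $v_n$ satisfies $|oo_n|_\om\le r_n$. Let $q_n\in\harm_\om$ be the harmonic pair associated with the sphere $S_{r_n}(o)$; then $p$ and $q_n$ share the axis $(o,\om)$, so by formula~(\ref{eq:distance}) the line distance equals $\ln(r_n/r)=n\ln(1/\si)$, and therefore $\de(p,q_n)=n\ln(1/\si)$.

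Next, I bound $\de(q_n,v_n)$ by a constant $D_2=D_2(\si)$. Lemma~\ref{lem:de_estimate_harmonic_chain}, applied to the chain $V_{\ell(p)-n}$ of radius $r_n$ together with the sphere $S_{r_n}(o)$, yields some $v'\in V_{\ell(p)-n}$ with $\de(q_n,v')\le 4\ln 160$. Both $v'$ and $v_n$ belong to this chain and have centers within $O(r_n)$ of $o$: the center of $v'$ lies within $2r_n$ (since its ball meets $S_{r_n}(o)$), while $|oo_n|_\om\le r_n$. By the spacing conditions of sect.~\ref{subsect:definition} (the relation $o_i<a_{i+1}$ forces $o_{i+1}-o_i>r_n$) and inequality~(\ref{eq:harmonic_below}), consecutive centers in a harmonic chain are separated by a distance comparable to $r_n$, so $v'$ and $v_n$ differ by at most a constant number $N_0=N_0(\si)$ of positions; applying Corollary~\ref{cor:horizont_edge_above} along this short horizontal segment gives $\de(v',v_n)\le 2N_0\ln 4$. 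Finally, since $|\ga''|\le 1$ and $\ga$ contains at most one horizontal edge, $p'$ is reached from $v_n$ by at most one horizontal and one radial edge in $Z$, so Corollaries~\ref{cor:horizont_edge_above} and~\ref{cor:vert_edge_above} yield $\de(v_n,p')\le D_3:=2\ln 4+\sqrt{2/\si}+2\ln 3$.

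Combining via the triangle inequality,
$$\de(p,p')\ge\de(p,q_n)-\de(q_n,v_n)-\de(v_n,p')\ge n\ln(1/\si)-D_2-D_3,$$
and since $|pp'|_Z=n+|h|+|\ga''|\le n+2$, this gives $\de(p,p')\ge C|pp'|_Z-D$ with $C=\ln(1/\si)$ and $D=2C+D_2+D_3$. The main obstacle is the combinatorial step controlling the distance between $v'$ and $v_n$ inside the harmonic chain $V_{\ell(p)-n}$: one must extract from the M\"obius axioms the fact that a harmonic chain of radius $r_n$ contains only $O(1)$ spheres whose centers lie in a disk of radius $O(r_n)$, which is what ultimately traps the constants in $D_2$ to depend only on $\si$.
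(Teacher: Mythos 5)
Your high-level plan is the same as the paper's: introduce the auxiliary pair $\wh p = q_n$ on the line $\h_{(o,\om)}$ through $p$ at level $\ell(p)-n$, compute $\de(p,\wh p)$ exactly using that lines are $\de$-geodesics, and bound the two corrections $\de(\wh p, v_n)$ and $\de(v_n, p')$ by constants. Steps 1, 3, and the assembly are fine.

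The detour in Step 2 is both unnecessary and gappy, and the two defects are linked. You apply Lemma~\ref{lem:de_estimate_harmonic_chain} to produce \emph{some} chain member $v'$ with $\de(q_n, v') \le 4\ln 160$, and then try to bound the combinatorial distance in the chain from $v'$ to $v_n$. But $v_n$ \emph{itself} already satisfies the hypothesis of Lemma~\ref{lem:de_estimate_harmonic_chain}: since $\ga'$ is radial, $B(p)\sub B(v_n)$, and by construction $B(p)\sub B_{r_n}(o)$, so the arcs of $v_n$ and of $q_n$ both contain the arc of $B(p)$ and hence intersect. Applying Lemma~\ref{lem:de_estimate_harmonic_chain} directly to the pair $(q_n, v_n)$ gives $\de(q_n, v_n)\le 4\ln 160$ with no intermediary, which is exactly what the paper does.

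The step you yourself flag as ``the main obstacle'' --- that a harmonic chain of radius $r_n$ has only $O(1)$ members with centers in a ball of radius $O(r_n)$ --- is not something you prove, and it does not follow as casually as you suggest from the spacing relation $o_i<a_{i+1}$ plus inequality~(\ref{eq:harmonic_below}). Axiom~M($\al$) gives, for $x<z<y$ in $X_\om$, only $|xy|_\om \ge |xz|_\om + \al|zy|_\om$ with $\al<1$; iterating along a chain of $N$ consecutive centers with gaps $>r_n$ yields a lower bound of the form $r_n\sum_{i\ge 1}\la^i$ with $\la = (1+\al)/2<1$, which plateaus at $r_n\,\la/(1-\la)$ and therefore gives \emph{no} bound on $N$ once the ambient interval exceeds that plateau. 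So that argument, as written, does not close. Drop the detour and apply Lemma~\ref{lem:de_estimate_harmonic_chain} to $v_n$ directly; then your proof coincides with the paper's.
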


\begin{proof} 
By our assumption,
$|\ga'|\ge|\ga|-2=|pp'|_Z-2$,
and
$\ga'\sub Z$
is a radial geodesic between harmonic
$p$
and
$q$
in
$X_\om$, $|\ga'|=|pq|_Z$,
where
$p=((a,b),(o,\om))=S_r(o)$, $q=((c,d),(o',\om))=S_{r'}(o')$,
$r=\si^l$, $r'=\si^k$.
For the levels
$l=\ell(p)$
and
$k=\ell(q)$
we have
$l>k$
and
$|\ga'|=l-k$.
The part
$e\cup\ga''$
of
$\ga$
consist of at most two edges between
$q$
and
$p'$,
one horizontal and one radial, thus
$\de(q,p')\le D_1$
by Corollaries~\ref{cor:horizont_edge_above}, \ref{cor:vert_edge_above},
with
$D_1\le\sqrt{2/\si}+2\ln 12$.

We take the sphere
$S_{r'}(o)=(a',b')\sub X_\om$,
and consider the harmonic
$\wh p=((a',b'),(o,\om))\in\harm_\om$.
Then by the triange inequality we have
$|\de(p,q)-\de(p,\wh p)|\le\de(q,\wh p)$.

Since
$q$
is a vertex of the hyperbolic approximation
$Z$,
the sphere
$S_{r'}(o)$
is a member of a harmonic chain. Since
$pq\sub Z$
is a radial geodesic segment,
$ab\sub cd\sub X_\om$.
By the choice of
$S_{r'}(o)$,
we have
$ab\sub a'b'$,
whence
$cd\cap a'b'\neq\es$.
Thus we can apply Lemma~\ref{lem:de_estimate_harmonic_chain} to 
$q$, $\wh p$,
and obtain
$\de(q,\wh p)\le D_2=4\ln 160$.
Therefore,
$\de(p,q)\ge\de(p,\wh p)-D_2$.

On the other hand,
$p$, $\wh p$
lie on a line in
$\harm_\om$,
thus
$|p\wh p|=\ln(r'/r)=(l-k)\ln\frac{1}{\si}$
because
$r'/r=1/\si^{l-k}$.
By Theorem~\ref{thm:de_metric_space},
$\de(p,\wh p)=|p\wh p|$.
Furthermore,
$|pq|_Z=l-k$
because
$pq\sub Z$
is a radial geodesic segment. Therefore,
$\de(p,q)\ge C|pq|_Z-D_2$
with
$C=\ln(1/\si)$.
Finally,
$\de(p,p')\ge\de(p,q)-\de(q,p')\ge C|pq|_Z-(D_1+D_2)\ge C(|pp'|_Z-2)-(D_1+D_2)=
C|pp'|_Z-D$
with
$D=2C+D_1+D_2$.
\end{proof}

\begin{lem}\label{lem:dedistance_below} Given vertices
$p$, $p'\in V$,
assume that
$|\ga''|\ge 2$
for a geodesic
$\ga=\ga'\cup h\cup\ga''$
between
$p$, $p'$.
Then
$\de(p,p')\ge C|pp'|_Z-D$
with
$C=\frac{1}{2}\ln\frac{1}{\si}$ 
and 
$D$
depending only on
$\si$.
\end{lem}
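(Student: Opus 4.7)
The plan is to produce two complementary lower bounds on $\de(p,p')$ and combine them via a case split; the factor $\tfrac12$ in the statement reflects the fact that one or the other becomes sharp depending on the ratio of the two radial arms. Write $n=|\ga'|=l-k$ and $n'=|\ga''|=l'-k$ with $l,l'$ the levels of $p,p'$ and $k$ the lowest level of $\ga$. The hypothesis gives $n\ge n'\ge 2$, so $|pp'|_Z=n+n'+|h|$ satisfies both $|pp'|_Z\le 2n+1$ and $|pp'|_Z\le 4n'+1$.

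The first bound is Busemann-type. The sub-path $\ga'\cup h\sub\ga$ is a $Z$-geodesic from $p$ to $q'$ whose upper radial arm is empty, so Lemma~\ref{lem:length_below_geod_hyp_approx} gives $\de(p,q')\ge\ln(1/\si)(n+|h|)-D_1$. To control $\de(p',q')$ from above I pass through the shadow point $\wh p'$ on the line $h_{(o',\om)}$ at level $k$, namely the harmonic pair with centre $o'$ and sphere radius $\si^k$. By Theorem~\ref{thm:de_metric_space} this line is a geodesic of $\harm$, so $\de(p',\wh p')=n'\ln(1/\si)$ by formula~(\ref{eq:distance}); and since $\wh p'$ and $q'$ both correspond to spheres of radius $\si^k$ with overlapping balls, Lemma~\ref{lem:de_estimate_harmonic_chain} yields $\de(\wh p',q')\le D_2$. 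The triangle inequality then gives $\de(p,p')\ge(n-n')\ln(1/\si)-D_{\mathrm I}$, which suffices when $n\ge 3n'$, since there $n-n'\ge\tfrac12(n+n')$.

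The second bound captures the horocyclic spread, needed in the regime $n'\le n<3n'$. Here the centres $o,o'$ differ by at least roughly $\si^k$, so the two spheres $S_{\si^{l'}}(o)$ and $S_{\si^{l'}}(o')$ sit at horocyclic distance of order $\si^{k-l'}=\si^{-n'}$ in the harmonic chain at level $l'$. The plan is to convert this into a bound $\de(p,p')\ge 2n'\ln(1/\si)-D_{\mathrm{II}}$ by showing that any zz-path realising $\de(p,p')$ must pay approximately $\ln(1/\si)$ per harmonic-chain step of horizontal travel at the deepest level it visits. This is the hard part: unlike the vertical direction, $\harm$ contains no line through $p$ and a horizontal target, so the estimate must be extracted directly from the cross-ratio axioms, in particular from Axiom~M($\al$) with $\al\ge\sqrt2-1$ (the same lower bound on $\al$ used in Lemma~\ref{lem:overlap} and Proposition~\ref{pro:diam_quasi-lines}).

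With both bounds in hand the claim follows by a case split. If $n\ge 3n'$ the first bound gives $\de(p,p')\ge(n-n')\ln(1/\si)-D_{\mathrm I}\ge\tfrac12|pp'|_Z\ln(1/\si)-D$. If $n<3n'$ the second gives $\de(p,p')\ge 2n'\ln(1/\si)-D_{\mathrm{II}}\ge\tfrac12(4n'+|h|)\ln(1/\si)-D\ge\tfrac12|pp'|_Z\ln(1/\si)-D$, using $|pp'|_Z\le 4n'+1$ in this regime. Taking $D$ to be the maximum of the additive constants completes the argument.
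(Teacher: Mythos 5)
Your case split is a sensible strategy and your Case~1 ($n\ge 3n'$) is correct: reusing Lemma~\ref{lem:length_below_geod_hyp_approx} on the sub-geodesic $\ga'\cup h$ from $p$ to $q'$, then bounding $\de(p',q')$ above via the auxiliary point $\wh p'$ on the line $\h_{(o',\om)}$ together with Lemma~\ref{lem:de_estimate_harmonic_chain}, does deliver $\de(p,p')\ge (n-n'+|h|)\ln(1/\si)-D_{\mathrm I}$, which suffices in that regime. However, your Case~2 ($n<3n'$) is not a proof: you state the target inequality $\de(p,p')\ge 2n'\ln(1/\si)-D_{\mathrm{II}}$, correctly identify it as ``the hard part'', and describe what a proof would need to do (charge $\ln(1/\si)$ per horizontal step at the deepest level visited by an arbitrary zz-path), but you never actually carry this out. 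Since $|\ga''|\ge 2$ alone does not force $n\ge 3n'$, your Case~2 is precisely where the lemma's content lies, so the gap is fatal. Your remark that ``$\harm$ contains no line through $p$ and a horizontal target'' also misdiagnoses the difficulty: the mechanism that overcomes it is to manufacture exactly such a line.

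The paper proves the lemma without a case split and by a different mechanism. Starting from $p'$, it takes the elliptic quasi-line $\wh e_\rho\sub\harm$ associated with an involution $\rho$ attached to $p'$ (sect.~\ref{subsect:involution_harm}), locates a point $\wh q=(s,t)$ on the line $\h_t$ through $p$ with $s\in e_\rho$ (Lemma~\ref{lem:harmonic_pairs_ellitic}), and a companion $q''=(s,t'')\in\wh e_\rho$. Proposition~\ref{pro:diam_quasi-lines} gives $\de(p',q'')\le D_0$ (this is where $\al\ge\sqrt2-1$ actually enters), and the non-expanding projection $\pr_t$ onto $\h_t$ from \cite[Lemma~5.5, Proposition~6.1]{Bu18}, applied to a closed zz-path, yields $\de(p,p')\ge|p\wh q|-(D_0+\ep)$. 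The hypothesis $|\ga''|\ge 2$ is then used to force a horizontal edge $vv'$ of level $k+2$ strictly between the two radial arms, whence $|oo'|_\om\ge\si^{k+2}/4$ by (\ref{eq:harmonic_below}); this gives $|p\wh q|\ge(l-k-2)\ln(1/\si)$, and the factor $1/2$ comes simply from $|pp'|_Z\le 2(l-k)+1$. So the paper replaces the horizontal-spread estimate you left open by the quasi-line diameter bound plus the projection lemma, and never needs your second regime's stronger bound $2n'\ln(1/\si)-D_{\mathrm{II}}$.
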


\begin{proof} As in Lemma~\ref{lem:length_below_geod_hyp_approx},
$\ga'\sub Z$
is a radial geodesic between harmonic
$p$
and
$q$
in
$X_\om$, $|\ga'|=|pq|_Z=l-k$,
where
$\ell(p)=l$, $\ell(q)=k$.

Then
$k$
is the level of
$qq'$, $k=\ell(q)=\ell(q')$
and
$\ga''\sub Z$
is a radial geodesic between harmonic
$q'$
and
$p'$
in
$X_\om$, $|\ga''|=|p'q'|_Z=l'-k$
where
$\ell(p')=l'$.
By our assumption
$|\ga'|\ge|\ga''|$. 
Thus
$l\ge l'$,
and
$|pp'|_Z=|\ga|\le|\ga'|+|\ga''|+1\le 2|\ga'|+1=2(l-k)+1$.

Let
$S$
be a zz-path in
$\harm$
between
$p$, $p'$
that approximates the distance
$\de(p,p')$, $\de(p,p')\ge |S|-\ep$
for some
$\ep>0$.
We fix an involution
$\rho:X\to X$
associated with 
$p'=((a',b'),t')$, $t'=(o',\om)$,
see sect.~\ref{subsect:involution_harm}, and let
$e=e_\rho$
be the respective elliptic quasi-line. By Lemma~\ref{lem:harmonic_pairs_ellitic},
there is a unique
$s\in e$
such that the pair
$\wh q=(s,t)$
is harmonic, where
$p=((a,b),t)$, $t=(o,\om)$.
Again, by Lemma~\ref{lem:harmonic_pairs_ellitic}, there is a unique
$t''\in e$
such that the pair
$(s,t'')$
is harmonic. Thus
$q''=(s,t'')\in\wh e$
as well as
$p'\in\wh e$
by definition of
$e$.
By Proposition~\ref{pro:diam_quasi-lines},
$\de(p',q'')\le D_0$
for some universal constant
$D_0<16$.
Hence, there is a zz-path
$S'$
between
$p'$
and
$q''$
with
$|S'|\le D_0+\ep$.

Note that
$t$, $t''$
lie on the line
$\h_s$.
Let
$S''$
be a zz-path between
$q''=(s,t'')$
and
$\wh q=(s,t)$
which consists of one side,
$S''\sub\h_s$.
Then
$p=((a,b),t)$
and
$\wh q$
lie on the line
$\h_t$.
Thus the concatenation
$\wh S:=S\ast S'\ast S''\ast \wh qp$
is a closed zz-path in
$\harm$.
We apply \cite[Proposition~6.1]{Bu18} to conclude
$|S|+|S'|+|S''|>|p\wh q|$.
The projection
$\pr_t:S\ast S'\ast S''\to\h_t$
does not increase distances, see \cite[Lemma~5.5 and Proposition~6.1]{Bu18},
and
$|\pr_t(S'')|=0$
because
$\pr_t(S'')=\wh q$.
Therefore,
$|S|\ge |p\wh q|-(D_0+\ep)$.

By definition of
$e$,
we have
$t'=(o',\om)\in e$.
We denote
$s=(z,u)$. 
Since
$(s,t)$
is harmonic, we have
$|zo|_\om=|ou|_\om$.
We assume without loss of generality that
$o<o'$, $z<o<u$
with respect to our fixed order on
$X_\om$.
Since 
$s$, $t'\in e$,
the pairs
$s=(z,u)$
and
$t'=(o',\om)$
separate each other, see Lemma~\ref{lem:separate}. Hence,
$|ou|_\om>|oo'|_\om$.

We denote by
$p_n$
the vertex of
$\ga'$
on the level
$n$, $\ell(p_n)=n$, $k\le n\le l$,
and similarly by
$p_n'$
the vertex of
$\ga''$
on the level
$n$, $\ell(p_n')=n$, $k\le n\le l'$.
Denote by
$\al_n$
the curve in
$Z$
between
$p_n$
and
$p_n'$
consisting horizontal edges. By the assumption
$|\ga''|\ge 2$,
thus there is a vertex
$p_n'\in\ga''$
with
$n=k+2$.
Note that 
$|\al_{k+2}|\ge 4$
because otherwise we can shorten the geodesic
$\ga$
between
$p$
and
$p'$.
Therefore, there is an edge
$vv'\sub\al_{k+2}$
with vertices
$v$, $v'$
different from the ends 
$p_{k+2}$, $p_{k+2}'$
of
$\al_{k+2}$.
Thus the intersection
$B_v\cap B_{v'}$
misses the balls
$B_{p_{k+2}}$
and
$B_{p_{k+2}'}$
by properties of harmonic chains. Here
$B_v\sub X_\om$
is the ball corresponding to the vertex
$v\in V$.

Since
$\ga'$, $\ga''\sub Z$
are radial geodesics, we have
$B_p\sub B_{p_{k+2}}$, $B_{p'}\sub B_{p_{k+2}'}$
for respective balls in
$X_\om$.
Recall that
$o$
is the center of
$B_p$,
and
$o'$
the center of
$B_{p'}$.
It follows that the intersection
$B_v\cap B_{v'}$
is a segment on
$X_\om$
lying inside of the segment
$oo'\sub X_\om$.
By inequality~(\ref{eq:harmonic_below}),
$|B_v\cap V_{v'}|\ge r/4$
for 
$r=\si^{k+2}$,
and we obtain
$|oo'|_\om\ge\si^{k+2}/4$.
Thus
$$|p\wh q|=\ln\frac{|ou|_\om}{\si^l}\ge\ln\frac{|oo'|_\om}{\si^l}\ge\ln\frac{\si^{k+2}}{\si^l}=
(l-k-2)\ln\frac{1}{\si}.$$
Since
$|\ga|\le 2(l-k)+1$,
we have
$|p\wh q|\ge|\ga|/2\cdot\ln\frac{1}{\si}-D_1$
with
$D_1=\frac{5}{2}\ln\frac{1}{\si}$.
Therefore,
$$|S|\ge|p\wh q|-(D_0+\ep)\ge C|\ga|-(D_0+D_1+\ep),$$
where
$C=\frac{1}{2}\ln\frac{1}{\si}$.
Finally, we conclude
$\de(p,p')\ge C|pp'|_Z-D$,
where
$D=D_0+D_1$.
\end{proof}

\begin{pro}\label{pro:inclusion_quasi-isometry} The inclusion
$f:V\hookrightarrow\harm_\om$
is a quasi-isometry with respect to the metric on
$Z$
and
$\de$-metric 
on
$\harm_\om$.
\end{pro}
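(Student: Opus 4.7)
The plan is to verify the two defining conditions of a quasi-isometry: first, a two-sided linear estimate comparing $\de(f(v),f(v'))$ with $|vv'|_Z$, and second, coboundedness of $f(V)$ in $\harm_\om$. All the heavy lifting is packaged in the lemmas of sect.~\ref{subsect:estimates_above} and \ref{subsect:estimates_below}, so the argument is essentially an assembly.

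For the upper bound, Corollary~\ref{cor:qi_above} already gives $\de(v,v')\le C_1|vv'|_Z$ for every pair of vertices $v,v'\in V$, with $C_1\le\sqrt{2/\si}+2\ln 3$ depending only on $\si$.

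For the lower bound, Lemma~\ref{lem:geodesics_in_z} says that any two vertices $p,p'\in V$ are joined in $Z$ by a geodesic of the form $\ga=\ga'\cup h\cup\ga''$, where $\ga',\ga''$ are radial and $h$ is at most one horizontal edge lying on the lowest level of $\ga$. Assuming without loss of generality $|\ga''|\le|\ga'|$, we split into the two cases handled separately in sect.~\ref{subsect:estimates_below}: if $|\ga''|\le 1$, then Lemma~\ref{lem:length_below_geod_hyp_approx} applies and yields $\de(p,p')\ge \ln(1/\si)\cdot|pp'|_Z-D_1$; if $|\ga''|\ge 2$, Lemma~\ref{lem:dedistance_below} applies and yields $\de(p,p')\ge \tfrac12\ln(1/\si)\cdot|pp'|_Z-D_2$. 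Taking $C_2=\tfrac12\ln(1/\si)$ and $D=\max\{D_1,D_2\}$, we obtain $\de(p,p')\ge C_2|pp'|_Z-D$ in both cases.

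For coboundedness, Lemma~\ref{lem:cobouded_vertex} gives $\de(p,V)\le D_3$ for every $p\in\harm_\om$, with $D_3$ depending only on $\si$. Combining the three ingredients produces constants $(C_1,C_2,D)$ witnessing that $f:V\hookrightarrow\harm_\om$ is a quasi-isometric embedding with cobounded image, hence a quasi-isometry. There is no real obstacle left at this stage: the main technical content -- namely the uniform control on diameters of elliptic quasi-lines (Proposition~\ref{pro:diam_quasi-lines}) and the geometric estimates for harmonic chains -- has already been used to prove the lemmas above, so the proof of Proposition~\ref{pro:inclusion_quasi-isometry} amounts to quoting them in the right order.
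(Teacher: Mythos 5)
Your proof is correct and follows essentially the same approach as the paper: it assembles the upper bound from Corollary~\ref{cor:qi_above}, the lower bound from Lemmas~\ref{lem:length_below_geod_hyp_approx} and \ref{lem:dedistance_below} (splitting on $|\ga''|\le 1$ versus $|\ga''|\ge 2$), and coboundedness from Lemma~\ref{lem:cobouded_vertex}. Your observation that one may take $C_2=\tfrac12\ln(1/\si)$ as a common lower constant (since it is the smaller of the two lemmas' constants) is a small useful clarification that the paper leaves implicit.
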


\begin{proof} By Corollary~\ref{cor:qi_above} we have
$\de(v,v')\le C|vv'|_Z$
for every pair vertices
$v$, $v'\in V$,
where the constant
$C$
depends only on
$\si$.
By Lemmas~\ref{lem:length_below_geod_hyp_approx} and \ref{lem:dedistance_below}
we have
$\de(v,v')\ge C|vv'|_Z-D$
for every pair vertices
$v$, $v'\in V$,
where the constants
$C$, $D$
depend only on
$\si$.
Thus the map 
$f$
is quasi-isometric. By Lemma~\ref{lem:cobouded_vertex}, the set 
$V$
is cobouded in
$\harm_\om$.
Thus
$f$
is quasi-isometry.
\end{proof}

\begin{pro}\label{pro:de_metric_space_hyp} Assume that a M\"obius structure 
$M$
on
$X=S^1$
is strictly monotone, i.e., it satisfies axioms~(T), (M($\al$)), (P),
and satisfies Increment axiom. Then
$(\harm,\de)$
is a complete, proper, hyperbolic geodesic metric space with
$\de$-metric
topology coinciding with that induced from
$X^4$. 
\end{pro}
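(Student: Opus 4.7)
The plan is to split the conclusions of the proposition into two halves. First, that $(\harm,\de)$ is complete, proper and geodesic, and that the $\de$-topology coincides with the topology induced from $X^4$, is already contained in Theorem~\ref{thm:de_metric_space} combined with Remark~\ref{rem:hm_vs_harm}; the Increment axiom enters this half only through the assertion that lines in $\harm$ are geodesics, a property we shall in any case need below. Hence the only genuinely new content to prove is hyperbolicity of $(\harm,\de)$.

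For hyperbolicity, the strategy is to compare $\harm$ with the harmonic hyperbolic approximation $Z=Z(\si)$ of $X_\om$, for an arbitrarily fixed $\om\in X$ and some $\si\le 1/24$, through the chain
$$V \hookrightarrow \harm_\om \hookrightarrow \harm,$$
where $V$ is the vertex set of $Z$. Three previously established facts make this work: first, by Proposition~\ref{pro:hyperbolic_harmonic_approximation} the space $Z$ is geodesic and $\de$-hyperbolic with hyperbolicity constant $5$; second, by Proposition~\ref{pro:inclusion_quasi-isometry} the inclusion $V\hookrightarrow\harm_\om$ is a quasi-isometry with respect to the edge-length metric on $Z$ and the induced $\de$-metric; and third, by Corollary~\ref{cor:uniform_cobounded} the subspace $\harm_\om$ is $D$-dense in $\harm$ for a universal constant $D$.

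Since the inclusion $\harm_\om\hookrightarrow\harm$ is an isometric embedding, the $D$-density turns it into a $(1,D)$-quasi-isometry. Composing with the quasi-isometry $V\hookrightarrow\harm_\om$ yields a quasi-isometry $V\to\harm$. Because $\harm$ is geodesic by the first half of the proof and $V$ is quasi-isometric to the geodesic space $Z$, the standard quasi-isometry invariance of $\de$-hyperbolicity among (quasi-)geodesic spaces transfers hyperbolicity from $Z$ to $\harm$, with a hyperbolicity constant depending only on $\si$, the quasi-isometry constants, and the value $5$ from Proposition~\ref{pro:hyperbolic_harmonic_approximation}. This completes the proof.

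The bulk of the difficulty is absorbed by the preceding sections: the uniform diameter bound for elliptic quasi-lines in Proposition~\ref{pro:diam_quasi-lines} (which powers Corollary~\ref{cor:uniform_cobounded}) and the two-sided distance comparisons between $Z$ and $\harm_\om$ carried out in Section~\ref{sect:Xquasi-isometricZ}. Given these, the only step to perform here is the assembly described above, and the only point requiring care is that the cobounded inclusion $\harm_\om\hookrightarrow\harm$ is promoted from a quasi-isometric embedding to a genuine quasi-isometry, which is immediate from Corollary~\ref{cor:uniform_cobounded}. I do not expect any substantive obstacle beyond correctly tracking the quasi-isometry constants along the chain.
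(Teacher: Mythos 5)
Your proposal is correct and follows essentially the same route as the paper: reduce the new content to hyperbolicity, then transfer it from the hyperbolic approximation $Z$ via the quasi-isometry chain $V\hookrightarrow\harm_\om\hookrightarrow\harm$ using Proposition~\ref{pro:hyperbolic_harmonic_approximation}, Proposition~\ref{pro:inclusion_quasi-isometry}, and Corollary~\ref{cor:uniform_cobounded}, together with quasi-isometry invariance of hyperbolicity for geodesic spaces. The only difference is that you spell out more explicitly why the cobounded inclusion $\harm_\om\hookrightarrow\harm$ is a $(1,D)$-quasi-isometry, which the paper leaves implicit.
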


\begin{proof} By Theorem~\ref{thm:de_metric_space},
$(\harm,\de)$
is a complete, proper, geodesic metric space with
$\de$-metric
topology coinciding with that induced from
$X^4$.
By Corollary~\ref{cor:uniform_cobounded}, any its subset
$\harm_\om$, $\om\in X$,
is quasi-isometric
$(\harm,\de)$.
Using Proposition~\ref{pro:inclusion_quasi-isometry}, we see that
$(\harm_\om,\de)$
is quasi-isometric to its hyperbolic approximation
$Z=Z(\om,\si)$. Thus
$(\harm,\de)$
is quasi-isometric to
$Z$.
By Proposition~\ref{pro:hyperbolic_harmonic_approximation},
$Z$
is hyperbolic. Since both spaces
$(\harm,\de)$
and
$Z$
are geodesic, the space
$(\harm,\de)$
is hyperbolic. 
\end{proof}

\begin{proof}[Proof of Theorem~\ref{thm:main}] We define
$Y=(\harm,\de)$.
By Proposition~\ref{pro:de_metric_space_hyp},
$Y$
is a complete, proper, hyperbolic geodesic metric space. We clearly
have
$\di\harm_\om=X_\om$
for every
$\om\in X$.
Since
$\harm_\om$
is cobouded in
$Y$,
we have
$\di Y=\harm_\om\cup\{\om\}=X=S^1$.
The fact that the induced M\"obius structure
$M_Y$
on
$X$
is isomorphic to
$M$
is tautological because all of the geometry of
$Y$
including
$Y$
itself is determined via
$M$.
In particular, given two points
$x$, $x'\in\di Y$,
we take
$\om\in\di Y$
different from
$x$, $x'$.
Then
$x$, $x'\in X_\om$,
and we consider the line
$\h=\h_{(x,\om)}\sub\harm_\om\sub Y$.
Furthermore, we fix 
$y\in X_\om$, $y\neq x$,
and observe that there are points
$p$, $q\in\h$
such that
$x\in p$, $y\in q$.
Then
$|xx'|_\om=\be e^{\pm|pq|}$
for some fixed constant
$\be(=|xy|_\om)$.
In other words, the metric of
$X_\om$
is recovered from the geometry of
$Y$.
\end{proof}

\end{document}